\newtheorem{theorem}{Theorem}
\theoremstyle{plain}
\newtheorem{corollary}{Corollary}
\newtheorem{definition}{Definition}
\newtheorem{lemma}{Lemma}
\newtheorem{proposition}{Proposition}
\newtheorem{remark}{Remark}
\numberwithin{equation}{section}
\begin{document}
\title[Cusa type inequalities for trigonometric functions]{Sharp Cusa type
inequalities for trigonometric functions with two parameters}
\author{Zhen-Hang Yang}
\address{Power Supply Service Center, Zhejiang Electric Power Corporation
Research Institute, Hangzhou City, Zhejiang Province, 310009, China}
\email{yzhkm@163.com}
\date{March 31, 2014}
\subjclass[2010]{Primary 26D05, 33B10; Secondary 26A48, 26D15}
\keywords{Trigonometric function, monotonicity, inequality}
\thanks{This paper is in final form and no version of it will be submitted
for publication elsewhere.}

\begin{abstract}
Let $\left( p,q\right) \mapsto \beta \left( p,q\right) $ be a function
defined on $\mathbb{R}^{2}$. We determine the best or better $p,q$ such that
the inequality%
\begin{equation*}
\left( \frac{\sin x}{x}\right) ^{p}<\left( >\right) 1-\beta \left(
p,q\right) +\beta \left( p,q\right) \cos ^{q}x
\end{equation*}%
holds for $x\in \left( 0,\pi /2\right) $, and obtain a lot of new and sharp
Cusa type inequalities for trigonometric functions. As applications, some
new Shafer-Fink type and Carlson type inequalities for arc sine and arc
cosine functions, and new inequalities for trigonometric means are
established.
\end{abstract}

\maketitle

\section{Introduction}

For $x\in \left( 0,\pi /2\right) $, the double inequality%
\begin{equation}
\left( \cos x\right) ^{1/3}<\frac{\sin x}{x}<\frac{2+\cos x}{3}  \label{M-C}
\end{equation}%
holds true, where the left inequality was obtained by Adamovi\'{c} and
Mitrinovi\'{c} (see \cite[2, p. 238]{Mitrinovic.AI.1970}), while the right
one is due to Cusa and Huygens (see, e.g., \cite{Huygens}) and it is now
known as \emph{Cusa's inequality} \cite{Chen.JIA.2011.136}, \cite%
{Mortitc.MIA.14(2011)}, \cite{Neuman.13(4)(2010)}, \cite%
{Sandor.RGMIA.8(3)(2005)}, \cite{Zhu.CMA.58(2009)}.

There are many improvements, refinements and generalizations of (\ref{M-C}).
For the first inequality in (\ref{M-C}), a nice refinement has appeared in 
\cite[3.4.6]{Mitrinovic.AI.1970}, which states that for $x\in \left( 0,\pi
/2\right) $, the inequalities%
\begin{equation*}
\left( \cos x\right) ^{1/3}<\cos \frac{x}{\sqrt{3}}<\frac{\sin x}{x}
\end{equation*}%
hold. Neuman presented an interesting chain of inequalities in \cite[Theorem
1]{Neuman-AIA-12} (also see \cite{Neuman.13(4)(2010)}, \cite{Lv.25(2012)}, 
\cite{Neuman.JMI-5(4)-2012}, \cite[(3.23)]{Chen.JMI.8.1.2014}), that is, the
inequalities%
\begin{eqnarray}
\left( \cos x\right) ^{1/3} &<&\left( \cos x\frac{\sin x}{x}\right)
^{1/4}<\left( \frac{\sin x}{\func{arctanh}\sin x}\right) ^{1/2}<\left( \frac{%
\cos x+\left( \sin x\right) /x}{2}\right) ^{1/2}  \notag \\
&<&\left( \frac{1+2\cos x}{3}\right) ^{1/2}<\left( \frac{1+\cos x}{2}\right)
^{2/3}<\frac{\sin x}{x}  \label{Neuman1}
\end{eqnarray}%
are valid for $x\in \left( 0,\pi /2\right) $.

For the second one in (\ref{M-C}), Yang \cite{Yang.GJM.2.1.2014} and Kl\'{e}%
n et al. \cite[Theorem 2.4]{Klen.JIA.2010} showed that for\emph{\ }$x\in
(0,\pi )$%
\begin{equation}
\frac{\sin x}{x}\leq \cos ^{3}\frac{x}{3}\leq \frac{2+\cos x}{3}.
\label{Klen-Yang}
\end{equation}%
Further, Yang \cite{Yang.JIA.2013.541} has shown that for $x\in \left( 0,\pi
/2\right) $ the inequalities%
\begin{equation}
\frac{\sin x}{x}<\left( \tfrac{2}{3}\cos \tfrac{x}{2}+\tfrac{1}{3}\right)
^{2}<\cos ^{3}\frac{x}{3}<\frac{2+\cos x}{3}  \label{Yang1}
\end{equation}%
are true.

By constructing a monotonic function $p\mapsto \left( \cos px\right)
^{1/\left( 3p^{2}\right) }$ ($p\in (0,1]$), Yang \cite{Yang.JMI.2013} showed
that the inequalities 
\begin{eqnarray}
\left( \cos x\right) ^{1/3} &<&\cos \frac{x}{\sqrt{3}}<\left( \cos \frac{x}{2%
}\right) ^{4/3}<\frac{\sin x}{x}  \label{Yang2} \\
&<&\left( \cos \frac{x}{3}\right) ^{3}<\left( \cos \frac{x}{4}\right)
^{16/3}<e^{-x^{2}/6}<\frac{2+\cos x}{3}  \notag
\end{eqnarray}%
are valid for $x\in \left( 0,\pi /2\right) $.

It is worth mentioning that Zhu \cite{Zhu.CMA.58(2009)} established a more
general result containing Cusa-type inequalities, which is recorded as
follows.

\noindent \textbf{Theorem Zhu (}\cite{Zhu.CMA.58(2009)}\textbf{)}\label{Zhu}%
\emph{Let }$0<x<\pi /2$\emph{. Then}

\emph{(i) if }$p\geq 1$\emph{, the double inequality}%
\begin{equation}
1-\xi +\xi \left( \cos x\right) ^{p}<\left( \frac{\sin x}{x}\right)
^{p}<1-\eta +\eta \left( \cos x\right) ^{p}  \label{Zhu1}
\end{equation}%
\emph{holds if and only if }$\eta \leq 1/3$\emph{\ and }$\xi \geq 1-\left(
2/\pi \right) ^{p}$\emph{;}

\emph{(ii) if }$0\leq p\leq 4/5$\emph{, the double inequality}%
\begin{equation}
1-\eta +\eta \left( \cos x\right) ^{p}<\left( \frac{\sin x}{x}\right)
^{p}<1-\xi +\xi \left( \cos x\right) ^{p}  \label{Zhu2}
\end{equation}%
\emph{holds if and only if }$\eta \geq 1/3$\emph{\ and }$\xi \leq 1-\left(
2/\pi \right) ^{p}$\emph{'}

\emph{(iii) if }$p<0$\emph{, the inequality}%
\begin{equation*}
\left( \frac{\sin x}{x}\right) ^{p}<1-\eta +\eta \left( \cos x\right) ^{p}
\end{equation*}%
\emph{holds if and only if }$\eta \geq 1/3$\emph{.}

As a consequence of Theorem Zhu, the double inequality%
\begin{equation*}
\left( \tfrac{2}{3}+\tfrac{1}{3}\left( \cos x\right) ^{4/5}\right) ^{5/4}<%
\frac{\sin x}{x}<\tfrac{2}{3}+\tfrac{1}{3}\cos x
\end{equation*}%
holds $0<x<\pi /2$, where $4/5$ is the best. He \cite{He.JSU.26.2.2011}\ and
Yang \cite{Yang.MIA.17.2.2014} showed independently that the double
inequality%
\begin{equation}
\left( \tfrac{2}{3}+\tfrac{1}{3}\left( \cos x\right) ^{4/5}\right) ^{5/4}<%
\frac{\sin x}{x}<\left( \tfrac{2}{3}+\tfrac{1}{3}\left( \cos x\right)
^{p_{0}^{\ast }}\right) ^{1/p_{0}^{\ast }},  \label{Yang3}
\end{equation}%
holds for $x\in \left( 0,\pi /2\right) $ with the best constants $4/5$ and $%
p_{0}^{\ast }=\log _{\pi /2}\left( 3/2\right) \approx 0.8979$.

The aim of this paper is to determine the best or better $p,q$ such that the
inequality%
\begin{equation}
\left( \frac{\sin x}{x}\right) ^{p}<\left( >\right) 1-\beta +\beta \cos ^{q}x
\label{m1}
\end{equation}%
or%
\begin{equation}
\frac{\sin x}{x}<\left( >\right) \left( 1-\beta +\beta \cos ^{q}x\right)
^{1/p}  \label{m2}
\end{equation}%
holds for $x\in \left( 0,\pi /2\right) $.

The paper is organized as follows. In Section 2, we investigate the
monotonicity of the function $T_{p,q}$ defined on $\left( 0,\pi /2\right) $
by%
\begin{equation}
T_{p,q}\left( x\right) =\frac{U_{p}\left( \frac{\sin x}{x}\right) }{%
U_{q}\left( \cos x\right) },  \label{T_p,q}
\end{equation}%
where $p,q\in \mathbb{R}$ and $U_{p}$ is defined on $\left( 0,1\right) $ by%
\begin{equation}
U_{p}\left( t\right) =\frac{1-t^{p}}{p}\text{ if }p\neq 0\text{ and }%
U_{0}\left( t\right) =-\ln t.  \label{U_p}
\end{equation}%
In Section 3, by using the monotonicity of $T_{p,q}$ on $\left( 0,\pi
/2\right) $, we prove some sharp Cusa type inequalities for trigonometric
functions for certain $p,q$. It is not only to generalize Zhu's results, but
also present many new and interesting inequalities for trigonometric
functions. In the last section, as applications, some new inequalities for
arc sine function and bivariate means are presented.

\section{Monotonicity}

We begin with the following simple assertion.

\begin{lemma}
\label{Lemma u_p}Let the function $U_{p}$ defined on $\left( 0,1\right) $ by
(\ref{U_p}). Then $p\mapsto U_{p}\left( t\right) $ is decreasing on $\mathbb{%
R}$ and $U_{p}\left( t\right) >0$ for $t\in \left( 0,1\right) $.
\end{lemma}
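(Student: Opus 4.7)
The plan is to unify the $p \neq 0$ and $p = 0$ cases by writing $U_p(t)$ as a single integral in which the dependence on $p$ is transparent. Set $u = -\ln t$; since $t \in (0,1)$, we have $u > 0$. Then I observe that for every $p \in \mathbb{R}$,
\begin{equation*}
U_p(t) \;=\; \int_0^{u} e^{-ps}\, ds.
\end{equation*}
Indeed, when $p \neq 0$ the antiderivative gives $(1-e^{-pu})/p = (1-t^p)/p$, and when $p = 0$ the integrand is $1$ so the integral equals $u = -\ln t = U_0(t)$. Thus this formula really does extend $U_p(t)$ continuously through $p = 0$ and matches the piecewise definition (\ref{U_p}).

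From the integral representation both conclusions drop out at once. First, for any fixed $s > 0$ the exponential $p \mapsto e^{-ps}$ is strictly decreasing in $p$; integrating over the positive-length interval $(0,u)$ preserves strict monotonicity, so $p \mapsto U_p(t)$ is strictly decreasing on $\mathbb{R}$. Second, the integrand $e^{-ps}$ is strictly positive and the interval has positive length, so $U_p(t) > 0$ for every $p \in \mathbb{R}$ and every $t \in (0,1)$.

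There is no real obstacle here; the only thing to be careful about is verifying the integral formula genuinely covers the $p = 0$ case, so that the single argument replaces what would otherwise be a three-way case split (the signs of $p$ and of $1 - t^p$). As a sanity check one could alternatively compute
\begin{equation*}
\frac{\partial U_p}{\partial p}(t) \;=\; \frac{(pu+1)e^{-pu} - 1}{p^2}
\end{equation*}
and note that the numerator vanishes at $p=0$ and has derivative $-p\,u^2 e^{-pu}$, hence is negative for all $p \neq 0$; but the integral approach above is cleaner and handles $p=0$ without a separate limiting argument.
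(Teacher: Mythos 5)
Your proof is correct, and it takes a genuinely different route from the paper's. The paper fixes $p\neq 0$, computes
\begin{equation*}
\frac{\partial }{\partial p}U_{p}\left( t\right) =\frac{t^{p}}{p^{2}}\left( \ln t^{-p}-\left( t^{-p}-1\right) \right),
\end{equation*}
invokes $\ln x\leq x-1$ to conclude the derivative is negative, and then obtains positivity indirectly from the monotonicity via $U_{p}\left( t\right) >\lim_{p\rightarrow \infty }U_{p}\left( t\right) =0$. Your integral representation $U_{p}\left( t\right) =\int_{0}^{-\ln t}e^{-ps}\,ds$ delivers both conclusions in one stroke: strict monotonicity in $p$ because the integrand is strictly decreasing in $p$ pointwise, and positivity because the integrand is positive over an interval of positive length. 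This buys two things the paper's argument does not: the case $p=0$ is handled uniformly rather than as a tacit limit (the paper only differentiates for $p\neq 0$ and never explicitly addresses continuity of $p\mapsto U_{p}\left( t\right)$ at $p=0$, which its limiting argument implicitly needs), and the positivity claim does not depend on first establishing monotonicity. Your sanity-check derivative $\left( \left( pu+1\right) e^{-pu}-1\right) /p^{2}$ is the same quantity as the paper's after the substitution $t=e^{-u}$, so the two arguments are consistent; yours is simply the cleaner packaging.
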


\begin{proof}
For $p\neq 0$, differentiation yields%
\begin{equation*}
\frac{\partial }{\partial p}U_{p}\left( t\right) =\frac{1}{p^{2}}\left(
t^{p}-1\right) -\frac{1}{p}t^{p}\ln t=\frac{t^{p}}{p^{2}}\left( \ln
t^{-p}-\left( t^{-p}-1\right) \right) <0,
\end{equation*}%
where the last inequality holds due to $\ln x\leq \left( x-1\right) $ for $%
x>0$.

Employing the decreasing property, we get $U_{p}\left( t\right)
>\lim_{p\rightarrow \infty }U_{p}\left( t\right) =0$, which proves the lemma.
\end{proof}

For $x\in (0,\pi /2)$, we denote by%
\begin{equation*}
S_{p}\left( x\right) :=U_{p}\left( \tfrac{\sin x}{x}\right) \text{ \ and \ }%
C_{p}\left( x\right) :=U_{p}\left( \cos x\right)
\end{equation*}%
due to $\left( \sin x\right) /x,\cos x\in \left( 0,1\right) $. Then we have%
\begin{eqnarray}
S_{p}\left( x\right) &=&\frac{1-\left( \frac{\sin x}{x}\right) ^{p}}{p}\text{
if }p\neq 0\text{ \ and \ }S_{0}\left( x,p\right) =-\ln \frac{\sin x}{x}%
\text{ if }p=0,  \label{Sp} \\
C_{p}\left( x\right) &=&\frac{1-\cos ^{p}x}{p}\text{ if }p\neq 0\text{ \ and
\ }C_{0}\left( x,p\right) =-\ln \left( \cos x\right) \text{ if }p=0.
\label{Cp}
\end{eqnarray}%
And then, the function $x\mapsto T_{p,q}\left( x\right) =U_{p}\left( \frac{%
\sin x}{x}\right) /U_{q}\left( \cos x\right) =S_{p}\left( x\right)
/C_{q}\left( x\right) $ can be expressed as%
\begin{equation}
T_{p,q}\left( x\right) =\left\{ 
\begin{array}{ll}
\frac{q}{p}\frac{1-\left( \frac{\sin x}{x}\right) ^{p}}{1-\cos ^{q}x} & 
\text{if }pq\neq 0, \\ 
\frac{1}{p}\frac{\left( \frac{\sin x}{x}\right) ^{p}-1}{\ln \left( \cos
x\right) } & \text{if }p\neq 0,q=0, \\ 
q\frac{\ln \frac{\sin x}{x}}{\cos ^{q}x-1} & \text{if }p=0,q\neq 0, \\ 
\frac{\ln \frac{\sin x}{x}}{\ln \left( \cos x\right) } & \text{if }p=q=0.%
\end{array}%
\right.  \label{T}
\end{equation}%
In order to investigate the monotonicity of the function $T_{p,q}$, we first
recall the following important lemmas.

\begin{lemma}[\protect\cite{Vamanamurthy.183.1994}, \protect\cite%
{Anderson.New York. 1997}]
\label{Lemma monotonicity of ratio}Let $f,g:\left[ a,b\right] \rightarrow 
\mathbb{R}$ be two continuous functions which are differentiable on $\left(
a,b\right) $. Further, let $g^{\prime }\neq 0$ on $\left( a,b\right) $. If $%
f^{\prime }/g^{\prime }$ is increasing (or decreasing) on $\left( a,b\right) 
$, then so are the functions 
\begin{equation*}
x\mapsto \frac{f\left( x\right) -f\left( a\right) }{g\left( x\right)
-g\left( a\right) }\text{ \ \ \ and \ \ \ }x\mapsto \frac{f\left( x\right)
-f\left( b\right) }{g\left( x\right) -g\left( b\right) }.
\end{equation*}
\end{lemma}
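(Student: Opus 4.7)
My plan is to prove this by a direct differentiation argument combined with Cauchy's mean value theorem. First, since $g'$ has no zero on $(a,b)$ and (by Darboux's intermediate value property for derivatives) cannot change sign, $g$ is strictly monotonic on $[a,b]$, so $g(x)\neq g(a)$ and $g(x)\neq g(b)$ for $x\in(a,b)$. Thus both ratios are well-defined on $(a,b)$.

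Let $h(x)=\dfrac{f(x)-f(a)}{g(x)-g(a)}$. A routine quotient-rule computation gives
\begin{equation*}
h'(x)=\frac{g'(x)}{(g(x)-g(a))^{2}}\left[\frac{f'(x)}{g'(x)}\bigl(g(x)-g(a)\bigr)-\bigl(f(x)-f(a)\bigr)\right].
\end{equation*}
The key step is to rewrite the bracket. By Cauchy's mean value theorem applied to $f,g$ on $[a,x]$, there exists $c=c(x)\in(a,x)$ with $f(x)-f(a)=\dfrac{f'(c)}{g'(c)}(g(x)-g(a))$. Substituting, the bracket becomes
\begin{equation*}
\bigl(g(x)-g(a)\bigr)\left[\frac{f'(x)}{g'(x)}-\frac{f'(c)}{g'(c)}\right].
\end{equation*}

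Now, $g(x)-g(a)$ has the same sign as $g'$ on $(a,b)$ (because $g$ is strictly monotonic starting from the left endpoint), so the product $g'(x)\bigl(g(x)-g(a)\bigr)$ is nonnegative. If $f'/g'$ is increasing, then since $c<x$ the bracketed difference $f'(x)/g'(x)-f'(c)/g'(c)\geq 0$, so $h'(x)\geq 0$ and $h$ is increasing on $(a,b)$; continuity (where applicable) extends this to the endpoint. The decreasing case, and the ratio anchored at $b$ (using Cauchy's theorem on $[x,b]$, where now $c\in(x,b)$ lies to the \emph{right} of $x$, but the sign of $g(x)-g(b)$ also flips, so the conclusion reverses correctly), are handled identically.

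I do not anticipate a real obstacle here: the only delicate points are (i) confirming that $g'$ keeps a constant sign despite not being assumed continuous (handled by Darboux), and (ii) tracking signs carefully in the $b$-anchored case, where both $g(x)-g(b)$ and the position of the Cauchy mean value point relative to $x$ flip, and these two sign flips cancel to give the \emph{same} monotonicity direction as $f'/g'$. Once these bookkeeping items are in place, the proof is essentially a one-line application of Cauchy's mean value theorem.
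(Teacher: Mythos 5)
Your proof is correct. Note that the paper does not prove this lemma at all---it is quoted verbatim from the cited references of Vamanamurthy--Vuorinen and Anderson--Vamanamurthy--Vuorinen---so there is no in-paper argument to compare against; your Cauchy-mean-value-theorem computation, including the Darboux argument for the constant sign of $g'$ and the observation that the two sign reversals in the $b$-anchored case cancel, is precisely the standard proof of this ``monotone l'H\^{o}pital rule'' as it appears in those sources, and I see no gap in it.
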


The following lemma is cruial to prove certain best inequalities, which is
inspired by part (iv) of proof of Theorem 6 in \cite%
{Yang.arXiv:1304.5369.2013}.

\begin{lemma}
\label{Lemma Yang}Suppose that $f,g:\left[ a,b\right] \rightarrow \mathbb{R}$
are two continuous functions which are differentiable on $\left( a,b\right) $
and $g^{\prime }\neq 0$ on $\left( a,b\right) $. If $f^{\prime }/g^{\prime }$
is increasing (decreasing) on $\left( a,x_{0}\right) $ and decreasing
(increasing) on $\left( x_{0},b\right) $, and%
\begin{equation}
\frac{f\left( b\right) -f\left( a\right) }{g\left( b\right) -g\left(
a\right) }\geq \left( \leq \right) \frac{f^{\prime }\left( a^{+}\right) }{%
g^{\prime }\left( a^{+}\right) }=\lambda \neq \pm \infty ,  \label{lambda}
\end{equation}%
then the inequality%
\begin{equation}
\frac{f\left( x\right) -f\left( a\right) }{g\left( x\right) -g\left(
a\right) }>\left( <\right) \lambda  \label{>(<)lambda}
\end{equation}%
holds for all $x\in \left( a,b\right) $.
\end{lemma}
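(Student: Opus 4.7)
The plan is to reduce the claim to a sign statement for the auxiliary function
\[
F(x) := f(x) - f(a) - \lambda\bigl(g(x) - g(a)\bigr),
\]
which satisfies $F(a)=0$ and, by hypothesis \eqref{lambda}, $F(b)\geq 0$, and whose derivative ratio obeys $F'/g' = f'/g' - \lambda$. Since $g'\neq 0$ on $(a,b)$ and derivatives enjoy the intermediate value (Darboux) property, $g'$ is of one sign throughout; I would treat $g'>0$, the other case being reducible to this one by replacing $(f,g)$ with $(-f,-g)$. Under $g'>0$ the desired inequality \eqref{>(<)lambda} is equivalent to $F(x)>0$ on $(a,b)$.

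By the hypothesis on $f'/g'$, the ratio $F'/g'$ is strictly increasing on $(a,x_0)$ and strictly decreasing on $(x_0,b)$, with $\lim_{x\to a^+}F'(x)/g'(x) = 0$. Hence $F'/g'>0$ on $(a,x_0)$, so $F'>0$ there and $F$ strictly increases from $F(a)=0$; in particular $F(x)>0$ for $x\in(a,x_0]$. On $(x_0,b)$ the ratio $F'/g'$ starts at a positive value and strictly decreases, so it either stays positive---in which case $F$ continues to increase and $F>0$ throughout---or crosses zero at a unique point $x_1\in(x_0,b)$, in which case $F$ strictly increases on $(a,x_1)$ to a maximum at $x_1$ and strictly decreases on $[x_1,b]$. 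In the latter scenario, $F(x)>F(x_0)>0$ for $x\in(x_0,x_1)$ and $F(x)>F(b)\geq 0$ for $x\in[x_1,b)$. Either way, $F>0$ on all of $(a,b)$.

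The reversed hypotheses (``decreasing--increasing'' with ``$\leq$'') are handled identically by applying the case just proved to $-f$ in place of $f$, which flips both the monotonicity pattern of $f'/g'$ and the sign of the endpoint inequality at $b$. The main obstacle I anticipate is the endpoint mismatch: the hypothesis at $b$ is only weak ($F(b)\geq 0$), yet the conclusion must be strict on the whole open interval. This is precisely where the ``up-then-down'' shape of $f'/g'$ is indispensable, since it guarantees that $F$ is \emph{strictly} monotonic in any one-sided neighbourhood of $b$ lying in $(x_0,b)$, and this strictness is what upgrades the weak bound $F(b)\geq 0$ to the strict inequality $F(x)>0$ on the open interval.
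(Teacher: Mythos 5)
Your proof is correct, but it follows a genuinely different route from the paper's. The paper never introduces the linearized function $F$: it applies the monotone l'Hospital rule (Lemma~\ref{Lemma monotonicity of ratio}) on $(a,x_0)$ to get $\frac{f(x)-f(a)}{g(x)-g(a)}>\lambda$ there, then on $(x_0,b)$ applies the same rule anchored at $b$ to bound $f(x)$ from below by the chord $\phi(x)$ through $\bigl(g(x_0),f(x_0)\bigr)$ and $\bigl(g(b),f(b)\bigr)$, and finally writes $\phi(x)$ as a convex combination of $f(x_0)$ and $f(b)$ to transfer the bounds already known at those two points. You instead reduce everything to the sign of $F=f-f(a)-\lambda(g-g(a))$ and argue that $F'$ changes sign at most once, from $+$ to $-$ (because $F'/g'$ rises from its limit $0$ at $a^{+}$ and then falls), so $F$ is unimodal and its infimum on $[a,b]$ is $\min\{F(a),F(b)\}\geq 0$, with strictness on the open interval coming from strict monotonicity near whichever endpoint is relevant. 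Your version is more elementary and self-contained --- it does not invoke Lemma~\ref{Lemma monotonicity of ratio} at all and makes the mechanism (start at $0$, rise, then fall to something $\geq 0$) transparent --- while the paper's version stays entirely at the level of difference quotients and reuses machinery already needed elsewhere. One small looseness in your write-up: since $f'/g'$ is only assumed monotone on each side of $x_0$, nothing guarantees that $F'/g'$ ``starts at a positive value'' on $(x_0,b)$; but this costs nothing, because if $F'/g'\leq 0$ from the outset of $(x_0,b)$ you are simply in your second scenario with $x_1=x_0$, and the same endpoint argument $F(x)>F(b)\geq 0$ applies. Your reductions ($g'<0$ to $g'>0$ via $(-f,-g)$, and the reversed case via $-f$) are also sound.
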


\begin{proof}
Without loss of generality, we assume that $g^{\prime }>0$ on $\left(
a,b\right) $.

For $x\in \left( a,x_{0}\right) $, by Lemma \ref{Lemma monotonicity of ratio}%
, since $f^{\prime }/g^{\prime }$ is increasing (decreasing) on $\left(
a,x_{0}\right) $, so is the function%
\begin{equation*}
x\mapsto \frac{f\left( x\right) -f\left( a\right) }{g\left( x\right)
-g\left( a\right) }.
\end{equation*}%
Then we get that for $x\in (a,x_{0}]$%
\begin{equation}
\frac{f\left( x\right) -f\left( a\right) }{g\left( x\right) -g\left(
a\right) }>\left( <\right) \lim_{x\rightarrow a^{+}}\frac{f\left( x\right)
-f\left( a\right) }{g\left( x\right) -g\left( a\right) }=\frac{f^{\prime
}\left( a^{+}\right) }{g^{\prime }\left( a^{+}\right) }=\lambda .
\label{(a,x0]}
\end{equation}%
That is, the inequality (\ref{>(<)lambda}) holds for $x\in (a,x_{0}]$.

On the other hand, from Lemma \ref{Lemma monotonicity of ratio}, that $%
f^{\prime }/g^{\prime }$ is decreasing (increasing) on $\left(
x_{0},b\right) $ means that so is the function%
\begin{equation*}
x\mapsto \frac{f\left( x\right) -f\left( b\right) }{g\left( x\right)
-g\left( b\right) },
\end{equation*}%
and hence we have%
\begin{equation*}
\frac{f\left( x\right) -f\left( b\right) }{g\left( x\right) -g\left(
b\right) }<\left( >\right) \frac{f\left( x_{0}\right) -f\left( b\right) }{%
g\left( x_{0}\right) -g\left( b\right) }\text{ for }x\in \left(
x_{0},b\right) ,
\end{equation*}%
which can be rewritten as%
\begin{equation*}
f\left( x\right) >\left( <\right) f\left( b\right) +\frac{f\left(
x_{0}\right) -f\left( b\right) }{g\left( x_{0}\right) -g\left( b\right) }%
\left( g\left( x\right) -g\left( b\right) \right) :=\phi \left( x\right) ,
\end{equation*}%
due to assumpation that $g^{\prime }>0$ on $\left( a,b\right) $. Clearly, in
order to prove the desired inequality, it suffices to prove%
\begin{equation*}
\phi \left( x\right) >\left( <\right) f\left( a\right) +\lambda \left(
g\left( x\right) -g\left( a\right) \right) \text{ for }x\in \left(
x_{0},b\right) .
\end{equation*}

Since $x_{0}$ statifies the relation (\ref{(a,x0]}), that is,%
\begin{equation*}
f\left( x_{0}\right) >\left( <\right) f\left( a\right) +\lambda \left(
g\left( x_{0}\right) -g\left( a\right) \right) ,
\end{equation*}%
which together with (\ref{lambda}), that is,%
\begin{equation*}
f\left( b\right) \geq \left( \leq \right) f\left( a\right) +\lambda \left(
g\left( b\right) -g\left( a\right) \right) ,
\end{equation*}%
leads to%
\begin{eqnarray*}
\phi \left( x\right) &=&\frac{g\left( x\right) -g\left( b\right) }{g\left(
x_{0}\right) -g\left( b\right) }f\left( x_{0}\right) +\frac{g\left(
x_{0}\right) -g\left( x\right) }{g\left( x_{0}\right) -g\left( b\right) }%
f\left( b\right) \\
&>&\left( <\right) \frac{g\left( x\right) -g\left( b\right) }{g\left(
x_{0}\right) -g\left( b\right) }\left( f\left( a\right) +\lambda \left(
g\left( x_{0}\right) -g\left( a\right) \right) \right) \\
&&+\frac{g\left( x_{0}\right) -g\left( x\right) }{g\left( x_{0}\right)
-g\left( b\right) }\left( f\left( a\right) +\lambda \left( g\left( b\right)
-g\left( a\right) \right) \right) \\
&=&f\left( a\right) +\lambda \left( g\left( x\right) -g\left( a\right)
\right) .
\end{eqnarray*}%
This means that the inequality (\ref{>(<)lambda}) also holds for $x\in
\left( x_{0},b\right) $. Thus the proof is completed.
\end{proof}

\begin{lemma}[\protect\cite{Biernacki.9.1955}]
\label{Lemma monotocity of A/B}Let $a_{n}$ and $b_{n}$ $(n=0,1,2,...)$ be
real numbers and let the power series $A\left( t\right) =\sum_{n=0}^{\infty
}a_{n}t^{n}$ and $B\left( t\right) =\sum_{n=0}^{\infty }b_{n}t^{n}$ be
convergent for $|t|<R$. If $b_{n}>0$ for $n=0,1,2,...$, and $a_{n}/b_{n}$ is
strictly increasing (or decreasing) for $n=0,1,2,...$, then the function $%
A\left( t\right) /B\left( t\right) $ is strictly increasing (or decreasing)
on $\left( 0,R\right) $.
\end{lemma}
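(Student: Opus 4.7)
The plan is to prove the strictly increasing case; the decreasing case then follows by applying the result to the sequence $(-a_n)_{n\ge 0}$. First I would reduce to showing that the numerator of $(A/B)'$ is positive on $(0,R)$. Because both power series converge absolutely for $|t|<R$, term-by-term differentiation and Cauchy multiplication are valid, so collecting by powers of $t$ gives
\[
A'(t)B(t)-A(t)B'(t)=\sum_{n,m\ge 0}(n-m)\,a_n b_m\, t^{n+m-1},
\]
with the $(n,m)=(0,0)$ term vanishing on account of the factor $n-m$.

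The key step is symmetrization. Pairing each unordered pair $\{n,m\}$ with $n\neq m$ (the diagonal terms $n=m$ contribute zero) rearranges the identity above as
\[
A'(t)B(t)-A(t)B'(t)=\sum_{k\ge 0}t^{k}\sum_{\substack{n+m=k+1\\ n>m\ge 0}}(n-m)\bigl(a_n b_m-a_m b_n\bigr).
\]
I would then factor $a_n b_m-a_m b_n=b_n b_m\bigl(a_n/b_n-a_m/b_m\bigr)$. Under the hypothesis that $b_n>0$ for every $n$ and that $n\mapsto a_n/b_n$ is strictly increasing, every summand is strictly positive: $n>m$ forces $n-m>0$, $b_n b_m>0$, and $a_n/b_n>a_m/b_m$. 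The $k=0$ coefficient alone, $b_0 b_1\bigl(a_1/b_1-a_0/b_0\bigr)$, is already positive, so $A'(t)B(t)-A(t)B'(t)>0$ throughout $(0,R)$, whence $(A/B)'>0$ and $A/B$ is strictly increasing.

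The essentially only obstacle is the bookkeeping in the double sum: one must carefully track the boundary indices $n=0$ and $m=0$ when reindexing, and justify the interchange of summation and the pairing step via absolute convergence on $|t|<R$. Once that is spelled out, the argument is a discrete analogue of Chebyshev's sum inequality applied to the monotone sequence $\bigl(a_n/b_n\bigr)$ weighted by the positive products $b_n b_m$.
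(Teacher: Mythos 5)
Your argument is correct. Note, however, that the paper does not prove this lemma at all: it is quoted verbatim from Biernacki and Krzy\.{z} \cite{Biernacki.9.1955} and used as a black box, so there is no in-paper proof to compare against. Your symmetrization of the Cauchy product of $A'B-AB'$, writing the coefficient of $t^{k}$ as
\begin{equation*}
\sum_{\substack{n+m=k+1\\ n>m\ge 0}}(n-m)\,b_{n}b_{m}\Bigl(\frac{a_{n}}{b_{n}}-\frac{a_{m}}{b_{m}}\Bigr),
\end{equation*}
is the standard proof of this result, and every step checks out: the $(0,0)$ term vanishes so no negative power of $t$ survives, the diagonal terms $n=m$ drop out, each off-diagonal pair contributes a strictly positive quantity under the hypotheses, the $k=0$ term guarantees strict positivity of the whole sum on $(0,R)$, $B(t)>0$ there since all $b_{n}>0$, and the rearrangements are licensed by absolute convergence inside the radius of convergence. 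The reduction of the decreasing case to the increasing case via $a_{n}\mapsto -a_{n}$ is also valid. Nothing is missing.
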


\begin{lemma}[{\protect\cite[pp.227-229]{Handbook.math.1979}}]
\label{Lemma expansion t}We have%
\begin{eqnarray}
\frac{1}{\sin x} &=&\frac{1}{x}+\sum_{n=1}^{\infty }\frac{2^{2n}-2}{\left(
2n\right) !}|B_{2n}|x^{2n-1}\text{,\ }|x|<\pi  \label{1/sinx} \\
\cot x &=&\frac{1}{x}-\sum_{n=1}^{\infty }\frac{2^{2n}}{\left( 2n\right) !}%
|B_{2n}|x^{2n-1}\text{, \ }|x|<\pi ,  \label{cotx} \\
\frac{1}{\sin ^{2}x} &=&\frac{1}{x^{2}}+\sum_{n=1}^{\infty }\frac{\left(
2n-1\right) 2^{2n}}{\left( 2n\right) !}|B_{2n}|x^{2n-2}\text{, \ }|x|<\pi ,
\label{1/sin^2x} \\
\tan x &=&\sum_{n=1}^{\infty }\frac{2^{2n}-1}{\left( 2n\right) !}%
2^{2n}|B_{2n}|x^{2n-1}\text{, \ }|x|<\pi /2.  \label{tanx}
\end{eqnarray}%
where $B_{n}$ is the Bernoulli number.
\end{lemma}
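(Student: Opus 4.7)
The plan is to deduce all four series from the single generating function $\tfrac{z}{e^{z}-1}=\sum_{n\geq 0}\tfrac{B_{n}}{n!}z^{n}$ together with three elementary trigonometric identities. I would first establish (\ref{cotx}) by writing
\begin{equation*}
\cot x=i+\frac{2i}{e^{2ix}-1},
\end{equation*}
which follows from $\cot x=i(e^{2ix}+1)/(e^{2ix}-1)$. Substituting $z=2ix$ in the generating function yields $x\cot x=\sum_{n\geq 0}B_{n}(2ix)^{n}/n!$; the $n=0,1$ contributions together with the standalone $i$ collapse to $1/x$, the odd indices $n\geq 3$ vanish because $B_{2m+1}=0$, and each even term reduces to $(-1)^{m}2^{2m}B_{2m}x^{2m-1}/(2m)!$. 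Since $B_{2m}$ has sign $(-1)^{m+1}$, this simplifies to $-|B_{2m}|2^{2m}x^{2m-1}/(2m)!$, giving (\ref{cotx}).

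Once (\ref{cotx}) is in hand, the remaining three expansions follow by termwise manipulation. For (\ref{1/sin^2x}) I would simply observe that $1/\sin^{2}x=-\tfrac{d}{dx}\cot x$ and differentiate (\ref{cotx}) term by term, which replaces $x^{2m-1}$ by $(2m-1)x^{2m-2}$ and produces the stated series. For (\ref{1/sinx}) I would use the identity $\csc x=\cot(x/2)-\cot x$; substituting (\ref{cotx}) at both $x$ and $x/2$, the leading terms combine into $1/x$ and the coefficients of $x^{2m-1}$ simplify to $(2^{2m}-2)|B_{2m}|/(2m)!$. For (\ref{tanx}) I would use $\tan x=\cot x-2\cot 2x$: here the $1/x$ terms cancel, the coefficient becomes $2^{2m}(2^{2m}-1)|B_{2m}|/(2m)!$, and the radius of convergence shrinks to $\pi/2$ because of the poles of $\tan$.

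The main obstacle, though purely bookkeeping, is keeping track of signs: the Bernoulli numbers $B_{2m}$ already alternate with sign $(-1)^{m+1}$, and the substitution $z=2ix$ introduces an extra factor $(-1)^{m}=i^{2m}$ from the even powers of $i$. Combining these correctly is what produces the crucial minus sign in front of the sum in (\ref{cotx}); once this is done, the other three expansions emerge with the positive coefficients displayed. Apart from this sign accounting, each manipulation is routine term-by-term algebra, and the validity of the expansions on $|x|<\pi$ (respectively $|x|<\pi/2$ for $\tan$) is inherited from the domain of convergence of the basic $\cot$ expansion together with the location of the singularities of the auxiliary trigonometric identities used.
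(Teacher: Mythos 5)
Your proposal is correct, but it is worth noting that the paper offers no proof of this lemma at all: the four expansions are simply quoted from the cited handbook. Your argument therefore supplies a genuine, self-contained derivation where the paper relies on a reference. The route you choose is the standard one and it checks out in every detail: the identity $\cot x=i+2i/(e^{2ix}-1)$ combined with $z/(e^{z}-1)=\sum_{n\ge 0}B_{n}z^{n}/n!$ at $z=2ix$ does give $x\cot x=1-\sum_{m\ge 1}2^{2m}|B_{2m}|x^{2m}/(2m)!$ once one uses $B_{1}=-\tfrac12$ to cancel the standalone $i$, the vanishing of the odd Bernoulli numbers, and $(-1)^{m}B_{2m}=-|B_{2m}|$; the three reductions $1/\sin^{2}x=-(\cot x)'$, $1/\sin x=\cot(x/2)-\cot x$, and $\tan x=\cot x-2\cot 2x$ are all valid identities and the coefficient arithmetic ($2^{2m}(1-2^{1-2m})=2^{2m}-2$ and $2^{2m}(2^{2m}-1)$) is right. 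The stated domains also follow as you say, since the auxiliary expansions at $x/2$ and $2x$ impose no constraint stronger than the location of the nearest singularity of the left-hand side in each case. What your approach buys over the paper's bare citation is a verification that the four series are mutually consistent and correctly signed, which is not entirely idle here: the paper's subsequent computations of the series for $A$, $B$, $C$ depend on the exact coefficients, so having an independent derivation of the lemma adds a useful check.
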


Now we are in position to prove the monotonicity of $T_{p,q}$. Clearly, $%
T_{p,q}\left( x\right) $ can be written as%
\begin{equation*}
T_{p,q}\left( x\right) =\frac{S_{p}\left( x\right) }{C_{q}\left( x\right) }=%
\frac{S_{p}\left( x\right) -S_{p}\left( 0^{+}\right) }{C_{q}\left( x\right)
-C_{q}\left( 0^{+}\right) }.
\end{equation*}%
For $pq\neq 0$, differentiation yields%
\begin{equation}
\frac{S_{p}^{\prime }\left( x\right) }{C_{q}^{\prime }\left( x\right) }=%
\frac{\cos ^{1-q}x}{x^{2}\sin x}\left( \frac{\sin x}{x}\right) ^{p-1}\left(
\sin x-x\cos x\right) :=f_{1}(x)  \label{f1}
\end{equation}%
\begin{equation}
f_{1}^{\prime }(x)=-\frac{1}{x^{2}\sin ^{3}x\cos ^{q}x}\left( \frac{\sin x}{x%
}\right) ^{p}\times f_{2}\left( x\right) ,  \label{df1}
\end{equation}%
where%
\begin{equation}
f_{2}\left( x\right) =pA\left( x\right) -qB\left( x\right) +C\left( x\right)
,  \label{f2}
\end{equation}%
in which 
\begin{subequations}
\begin{eqnarray}
A\left( x\right) &=&\left( \sin x-x\cos x\right) ^{2}\cos x>0,  \label{A} \\
B\left( x\right) &=&x\left( \sin x-x\cos x\right) \sin ^{2}x>0  \label{B} \\
C\left( x\right) &=&-\left( 2x^{2}\cos x-x\sin x-\cos x\sin ^{2}x\right) >0,
\label{C}
\end{eqnarray}%
here $C\left( x\right) >0$ due to 
\end{subequations}
\begin{equation*}
C\left( x\right) =x^{2}\left( \cos x\right) \left( \frac{\sin ^{2}x}{x^{2}}+%
\frac{\tan x}{x}-2\right) >0
\end{equation*}%
by Wilker inequality (see \cite{Wilker.96(1)(1989)}). It is easy to verify
that (\ref{f1}), (\ref{df1}) and (\ref{f2}) are true for $pq=0$.

We see clearly that, by Lemma \ref{Lemma monotonicity of ratio}, if we can
prove $f_{2}\left( x\right) \leq (\geq )0$ for all $x\in (0,\pi /2)$ then $%
T_{p,q}$ defined by (\ref{T}) is increasing (decreasing) on $(0,\pi /2)$. In
order to prove it, we need to the expansions of $A\left( x\right) ,B\left(
x\right) $ and $C\left( x\right) $. Using Lemma \ref{Lemma expansion t} we
get%
\begin{eqnarray}
\frac{A(x)}{\sin ^{2}x\cos x} &=&\frac{x^{2}\cos ^{3}x-2x\cos ^{2}x\sin
x+\cos x\sin ^{2}x}{\sin ^{2}x\cos x}=x^{2}\frac{1}{\sin ^{2}x}-x^{2}-2x%
\frac{\cos x}{\sin x}+1  \notag \\
&=&x^{2}\left( \frac{1}{x^{2}}+\sum_{n=1}^{\infty }\frac{\left( 2n-1\right)
2^{2n}}{\left( 2n\right) !}|B_{2n}|x^{2n-2}\right) -x^{2}  \notag \\
&&-2x\left( \frac{1}{x}-\sum_{n=1}^{\infty }\frac{2^{2n}}{\left( 2n\right) !}%
|B_{2n}|x^{2n-1}\right) +1  \notag \\
&=&\sum_{n=2}^{\infty }\frac{\left( 2n+1\right) 2^{2n}}{\left( 2n\right) !}%
|B_{2n}|x^{2n}:=\sum_{n=2}^{\infty }a_{n}x^{2n},  \label{Ae}
\end{eqnarray}%
\begin{eqnarray}
\frac{B\left( x\right) }{\sin ^{2}x\cos x} &=&\frac{x\sin ^{3}x-x^{2}\cos
x\sin ^{2}x}{\sin ^{2}x\cos x}=x\frac{\sin x}{\cos x}-x^{2}  \notag \\
&=&\sum_{n=1}^{\infty }\frac{2^{2n}-1}{\left( 2n\right) !}%
2^{2n}|B_{2n}|x^{2n}-x^{2}  \notag \\
&=&\sum_{n=2}^{\infty }\frac{\left( 2^{2n}-1\right) 2^{2n}}{\left( 2n\right)
!}|B_{2n}|x^{2n}:=\sum_{n=2}^{\infty }b_{n}x^{2n},  \label{Be}
\end{eqnarray}%
\begin{eqnarray}
\frac{C\left( x\right) }{\sin ^{2}x\cos x} &=&\frac{-2x^{2}\cos x+x\sin
x+\cos x\sin ^{2}x}{\sin ^{2}x\cos x}=-2x^{2}\frac{1}{\sin ^{2}x}+2x\frac{1}{%
\sin 2x}+1  \notag \\
&=&-2x^{2}\left( \frac{1}{x^{2}}+\sum_{n=1}^{\infty }\frac{\left(
2n-1\right) 2^{2n}}{\left( 2n\right) !}|B_{2n}|x^{2n-2}\right)  \notag \\
&&+2x\left( \frac{1}{2x}+\sum_{n=1}^{\infty }\frac{2^{2n}-2}{\left(
2n\right) !}|B_{2n}|2^{2n-1}x^{2n-1}\right) +1  \notag \\
&=&\sum_{n=2}^{\infty }\frac{\left( 2^{2n}-4n\right) 2^{2n}}{\left(
2n\right) !}|B_{2n}|x^{2n}:=\sum_{n=2}^{\infty }c_{n}x^{2n}.  \label{Ce}
\end{eqnarray}

\begin{lemma}
\label{Lemma g1}Let $g_{1}$ be defined on $(0,\pi /2)$ by%
\begin{equation}
g_{1}\left( x\right) =\frac{qB\left( x\right) -C\left( x\right) }{A\left(
x\right) }.  \label{g1}
\end{equation}%
where $A\left( x\right) ,B\left( x\right) $ and $C\left( x\right) $ are
defined by (\ref{A}), (\ref{B}) and (\ref{C}), respectively. Then

(i) $g_{1}$ is increasing on $(0,\pi /2)$ if $q\geq 1$, and we have%
\begin{equation}
3q-\frac{8}{5}<g_{1}\left( x\right) <\left\{ 
\begin{array}{ll}
\infty & \text{if }q>1, \\ 
\frac{\pi ^{2}}{4}-1 & \text{if }q=1;%
\end{array}%
\right.  \label{g1boundq>=}
\end{equation}

(ii) $g_{1}$ is decreasing on $(0,\pi /2)$ if $q\leq 34/35$, and we have%
\begin{equation}
-\infty <g_{1}\left( x\right) <3q-\frac{8}{5}.  \label{g1boundq<}
\end{equation}
\end{lemma}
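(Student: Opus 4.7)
The plan is to apply Biernacki's ratio-of-power-series lemma (Lemma \ref{Lemma monotocity of A/B}). Using the factorisations (\ref{Ae})--(\ref{Ce}), the common factor $\sin^2 x\cos x$ cancels from numerator and denominator:
\begin{equation*}
g_1(x) \;=\; \frac{qB(x)-C(x)}{A(x)} \;=\; \frac{\sum_{n\ge 2}(qb_n-c_n)\,x^{2n}}{\sum_{n\ge 2}a_n\,x^{2n}},
\end{equation*}
and since $a_n,b_n,c_n$ all share the positive factor $2^{2n}|B_{2n}|/(2n)!$, the coefficient ratio collapses to
\begin{equation*}
e_n \;:=\; \frac{qb_n-c_n}{a_n} \;=\; \frac{q(2^{2n}-1)-(2^{2n}-4n)}{2n+1} \;=\; \frac{(q-1)\,2^{2n}+4n-q}{2n+1}.
\end{equation*}
Monotonicity of $g_1$ on $(0,\pi/2)$ is thus reduced to the monotonicity of the real sequence $\{e_n\}_{n\ge 2}$.

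For part~(i) with $q\ge 1$, I decompose $e_n = (q-1)\cdot\frac{2^{2n}}{2n+1} + 2 - \frac{q+2}{2n+1}$. The first summand is nonnegative and increasing in $n$ because $2^{2n}/(2n+1)$ grows (check $4(2n+1)>2n+3$); the second is increasing since $q+2>0$. Hence $e_n$ is increasing and $g_1$ is increasing on $(0,\pi/2)$. The lower bound is $g_1(0^+)=e_2=(15q-8)/5=3q-8/5$. For the upper bound at $\pi/2^-$ I would use the closed forms $\tilde A(x)=(\sin x-x\cos x)^2/\sin^2 x$, $\tilde B(x)=x\tan x-x^2$, $\tilde C(x)=-2x^2/\sin^2 x+x/(\sin x\cos x)+1$, obtained by dividing $A,B,C$ by $\sin^2 x\cos x$: then $\tilde A(\pi/2)=1$; if $q=1$, the singular $1/\cos x$ contributions in $\tilde B-\tilde C$ cancel and direct evaluation gives $\pi^2/4-1$; if $q>1$, writing $q\tilde B-\tilde C=(q-1)\tilde B+(\tilde B-\tilde C)$ and using $\tilde B(x)\to+\infty$ yields $g_1(\pi/2^-)=+\infty$. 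For part~(ii) with $q\le 34/35<1$, I prove $e_n$ is decreasing via the identity
\begin{equation*}
(2n+1)(2n+3)(e_{n+1}-e_n) \;=\; (q-1)(6n+1)\,2^{2n}+2(q+2),
\end{equation*}
obtained by direct algebraic expansion. For $q<1$ the right-hand side is strictly decreasing in $n$ (the $(q-1)(6n+1)\,2^{2n}$ term has negative, exponentially growing magnitude), so it is $\le 0$ for every $n\ge 2$ iff it is $\le 0$ at $n=2$; that case reads $-208(1-q)+2(q+2)=210q-204\le 0$, equivalently $q\le 34/35$. Hence $g_1$ decreases on $(0,\pi/2)$, with upper bound $g_1(0^+)=3q-8/5$ as before, and lower bound $-\infty$ from $(q-1)\tilde B(x)\to-\infty$ as $x\to\pi/2^-$.

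The main obstacle I foresee is isolating exactly why $34/35$ is the critical threshold in part~(ii): among the family of conditions $e_{n+1}\le e_n$ one must show that the binding index is $n=2$, and the only transparent way is via the closed form for $e_{n+1}-e_n$ above, whose dominant $2^{2n}$ factor then shows that the constraint relaxes (rather than tightens) as $n$ grows. A secondary subtlety is the limit at $\pi/2^-$ when $q=1$: the individual power series for $\tilde B$ and $\tilde C$ only converge on $(-\pi/2,\pi/2)$, so one must work with the closed-form expressions to see how their $1/\cos x$ singularities cancel and leave the finite upper bound $\pi^2/4-1$.
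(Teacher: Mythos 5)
Your proposal is correct and follows essentially the same route as the paper: the same reduction of $g_1$ to the coefficient ratio $(qb_n-c_n)/a_n$ via the expansions (\ref{Ae})--(\ref{Ce}) and Biernacki's lemma (Lemma \ref{Lemma monotocity of A/B}), and the same difference identity $(2n+1)(2n+3)(e_{n+1}-e_n)=(q-1)(6n+1)2^{2n}+2(q+2)$, with the threshold $34/35$ pinned down at $n=2$. The only cosmetic differences are your direct two-term decomposition of $e_n$ for $q\ge 1$ (the paper instead takes the supremum of $1-6/((6n+1)4^n+2)$, which equals $1$) and your more explicit evaluation of the limits at $\pi/2^-$.
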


\begin{proof}
Using (\ref{Ae}), (\ref{Be}) and (\ref{Ce}) we have%
\begin{equation*}
g_{1}\left( x\right) =\tfrac{qB\left( x\right) -C\left( x\right) }{A\left(
x\right) }=\tfrac{\sum_{n=2}^{\infty }\left( qb_{n}-c_{n}\right) x^{2n}}{%
\sum_{n=2}^{\infty }a_{n}x^{2n}}\text{ \ and \ }\frac{qb_{n}-c_{n}}{a_{n}}=%
\tfrac{\left( 2^{2n}-1\right) q-\left( 2^{2n}-4n\right) }{\left( 2n+1\right) 
}.
\end{equation*}%
And then,%
\begin{eqnarray*}
\frac{qb_{n+1}-c_{n+1}}{a_{n+1}}-\frac{qb_{n}-c_{n}}{a_{n}} &=&\tfrac{\left(
2^{2\left( n+1\right) }-1\right) q-\left( 2^{2\left( n+1\right) }-4\left(
n+1\right) \right) }{\left( 2\left( n+1\right) +1\right) }-\tfrac{\left(
2^{2n}-1\right) q-\left( 2^{2n}-4n\right) }{\left( 2n+1\right) } \\
&=&\frac{\left( 6n+1\right) 4^{n}+2}{\left( 2n+3\right) \left( 2n+1\right) }%
\left( q-1+\frac{6}{\left( 6n+1\right) 4^{n}+2}\right) .
\end{eqnarray*}%
From this it is obtained that%
\begin{equation*}
\frac{qb_{n+1}-c_{n+1}}{a_{n+1}}-\frac{qb_{n}-c_{n}}{a_{n}}\left\{ 
\begin{array}{cc}
>0 & \text{if }q\geq \sup_{n\in \mathbb{N},n\geq 2}\left( 1-\frac{6}{\left(
6n+1\right) 4^{n}+2}\right) =1, \\ 
<0 & \text{if }q\leq \inf_{n\in \mathbb{N},n\geq 2}\left( 1-\frac{6}{\left(
6n+1\right) 4^{n}+2}\right) =\frac{34}{35}.%
\end{array}%
\right.
\end{equation*}

In the case of $q\geq 1$, we see that $\left( qb_{n}-c_{n}\right) /a_{n}$ is
increasing with $n\geq 2$, and by Lemma \ref{Lemma monotocity of A/B} it is
seen that $g_{1}$ is increasing on $(0,\pi /2)$. Hence, we have%
\begin{equation*}
3q-\frac{8}{5}=\lim_{x\rightarrow 0^{+}}g_{1}\left( x\right) <g_{1}\left(
x\right) <\lim_{x\rightarrow \pi /2^{-}}g_{1}\left( x\right) =\left\{ 
\begin{array}{ll}
\infty & \text{if }q>1, \\ 
\frac{\pi ^{2}}{4}-1 & \text{if }q=1.%
\end{array}%
\right.
\end{equation*}

When $q\leq 34/35$, the sequence $\left( qb_{n}-c_{n}\right) /a_{n}$ is
decreasing with $n\geq 2$, and so is the function $\left( qB-C\right) /A$ on 
$(0,\pi /2)$. Hence, we have (\ref{g1boundq<}).

This lemma is proved.
\end{proof}

\begin{lemma}
\label{Lemma g2}Let $g_{2}$ be defined on $(0,\pi /2)$ by%
\begin{equation}
g_{2}\left( x\right) =\frac{C\left( x\right) -\frac{8}{5}A\left( x\right) }{%
B\left( x\right) -3A\left( x\right) }.  \label{g2}
\end{equation}%
where $A\left( x\right) ,B\left( x\right) $ and $C\left( x\right) $ are
defined by (\ref{A}), (\ref{B}) and (\ref{C}), respectively. Then (i) $%
B\left( x\right) -3A\left( x\right) >0$ for $x\in (0,\pi /2)$; (ii) $g_{2}$
is increasing on $(0,\pi /2)$, and we have $34/35<g_{2}\left( x\right) <1$.
\end{lemma}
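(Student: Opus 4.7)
The plan is to follow the same strategy used in the proof of Lemma \ref{Lemma g1}: rewrite $g_{2}$ as a ratio of two power series (using the expansions (\ref{Ae}), (\ref{Be}), (\ref{Ce})), show that the denominator is positive, then apply Lemma \ref{Lemma monotocity of A/B} to the coefficient sequence, and finish by computing the two boundary values.

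Writing $a_{n}=(2n+1)2^{2n}|B_{2n}|/(2n)!$, $b_{n}=(2^{2n}-1)2^{2n}|B_{2n}|/(2n)!$, $c_{n}=(2^{2n}-4n)2^{2n}|B_{2n}|/(2n)!$, a short calculation gives
\[
b_{n}-3a_{n}=\tfrac{2^{2n}|B_{2n}|}{(2n)!}(2^{2n}-6n-4), \qquad c_{n}-\tfrac{8}{5}a_{n}=\tfrac{2^{2n}|B_{2n}|}{5(2n)!}(5\cdot 2^{2n}-36n-8).
\]
Both expressions vanish when $n=2$ (they equal $0$ and $0$ respectively) and are strictly positive for $n\geq 3$, since $2^{2n}\geq 64>6n+4$ and $5\cdot 2^{2n}\geq 320>36n+8$ in that range. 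This already yields part (i), because
\[
B(x)-3A(x)=\sin^{2}x\cos x\sum_{n=3}^{\infty}(b_{n}-3a_{n})x^{2n}>0 \quad\text{on }(0,\pi/2),
\]
and simultaneously it lets us cancel the common factor and write
\[
g_{2}(x)=\frac{\sum_{n=3}^{\infty}(c_{n}-\tfrac{8}{5}a_{n})x^{2n}}{\sum_{n=3}^{\infty}(b_{n}-3a_{n})x^{2n}}.
\]

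For part (ii), by Lemma \ref{Lemma monotocity of A/B} it suffices to show that the sequence
\[
h(n):=\frac{c_{n}-(8/5)a_{n}}{b_{n}-3a_{n}}=\frac{5\cdot 4^{n}-36n-8}{5(4^{n}-6n-4)}=1-\frac{6(n-2)}{5\cdot 4^{n}-30n-20}
\]
is strictly increasing for $n\geq 3$. This will be the main computational step; after clearing denominators, the inequality $h(n+1)>h(n)$ reduces to
\[
5\cdot 4^{n}(3n-7)+80>0,
\]
which is evident for $n\geq 3$ (the bracket is already $\geq 2$ at $n=3$). Lemma \ref{Lemma monotocity of A/B} then delivers the monotonicity of $g_{2}$ on $(0,\pi/2)$.

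Finally, the two boundary values are read off directly: as $x\to 0^{+}$, the ratio is dominated by the $n=3$ terms, giving
\[
\lim_{x\to 0^{+}}g_{2}(x)=h(3)=1-\tfrac{6}{210}=\tfrac{34}{35},
\]
and at the other endpoint $A(\pi/2)=0$, $B(\pi/2)=C(\pi/2)=\pi/2$, so $\lim_{x\to (\pi/2)^{-}}g_{2}(x)=1$. Combined with the monotonicity, these yield $34/35<g_{2}(x)<1$, completing the proof. The only real obstacle is the algebra behind the inequality $5\cdot 4^{n}(3n-7)+80>0$; everything else is a direct imitation of the argument for $g_{1}$.
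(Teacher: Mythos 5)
Your proposal is correct in substance and follows essentially the same route as the paper: the identical power-series decomposition in which the $n=2$ coefficients cancel, positivity of the denominator coefficients for $n\geq 3$, Lemma \ref{Lemma monotocity of A/B} applied to the coefficient ratio, and the same key inequality --- your reduced condition $5\cdot 4^{n}(3n-7)+80>0$ is exactly five times the numerator $(3n-7)2^{2n}+16$ appearing in the paper's difference computation, and your rewriting $h(n)=1-\frac{6(n-2)}{5\cdot 4^{n}-30n-20}$ is a mildly cleaner packaging of the same arithmetic. The endpoint evaluations $h(3)=34/35$ and $g_{2}(\pi/2^{-})=1$ (via $A(\pi/2)=0$, $B(\pi/2)=C(\pi/2)=\pi/2$) are also correct.

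One step needs repair: the justification ``$2^{2n}\geq 64>6n+4$ and $5\cdot 2^{2n}\geq 320>36n+8$ in that range'' is false as written, since $6n+4\geq 64$ for $n\geq 10$ and $36n+8>320$ for $n\geq 9$; you have only bounded $2^{2n}$ from below by its value at $n=3$, which does not dominate a linearly growing right-hand side. The claims $2^{2n}>6n+4$ and $5\cdot 2^{2n}>36n+8$ for all $n\geq 3$ are of course true, and the paper supplies a uniform argument via the binomial bound $2^{2n}>1+2n+n(2n-1)$, which gives $2^{2n}-6n-4>(2n+1)(n-3)\geq 0$ for $n\geq 3$ (the second inequality follows a fortiori). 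With that one sentence corrected, the proof is complete.
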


\begin{proof}
By using (\ref{Ae}), (\ref{Be}) and (\ref{Ce}), we get%
\begin{equation*}
g_{2}\left( x\right) =\frac{C\left( x\right) -\frac{8}{5}A\left( x\right) }{%
B\left( x\right) -3A\left( x\right) }=\frac{\sum_{n=3}^{\infty }\left( c_{n}-%
\frac{8}{5}a_{n}\right) x^{2n}}{\sum_{n=3}^{\infty }\left(
b_{n}-3a_{n}\right) x^{2n}}\text{ \ and \ }\frac{c_{n}-\frac{8}{5}a_{n}}{%
b_{n}-3a_{n}}=\frac{2^{2n}-\frac{36}{5}n-\frac{8}{5}}{2^{2n}-6n-4}.
\end{equation*}%
(i) In order for $B\left( x\right) -3A\left( x\right) >0$ to be true for $%
x\in (0,\pi /2)$, it suffices that $b_{n}-3a_{n}>0$ for $n\geq 3$. Employing
binomial expansion yields%
\begin{equation*}
b_{n}-3a_{n}=2^{2n}-6n-4>1+2n+\frac{2n\left( 2n-1\right) }{2}-6n-4=\left(
2n+1\right) \left( n-3\right) \geq 0.
\end{equation*}%
(ii) By Lemma \ref{Lemma monotocity of A/B}, to prove $g_{2}$ is increasing
on $(0,\pi /2)$, it is enough to show that for $n\geq 3$ 
\begin{equation*}
\frac{c_{n+1}-\frac{8}{5}a_{n+1}}{b_{n+1}-3a_{n+1}}-\frac{c_{n}-\frac{8}{5}%
a_{n}}{b_{n}-3a_{n}}>0.
\end{equation*}%
A direct computation leads to 
\begin{eqnarray*}
\frac{c_{n+1}-\frac{8}{5}a_{n+1}}{b_{n+1}-3a_{n+1}}-\frac{c_{n}-\frac{8}{5}%
a_{n}}{b_{n}-3a_{n}} &=&\frac{2^{2n+2}-\frac{36}{5}n-\frac{44}{5}}{%
2^{2n+2}-6n-10}-\frac{2^{2n}-\frac{36}{5}n-\frac{8}{5}}{2^{2n}-6n-4} \\
&=&\frac{6}{5}\frac{\left( 3n-7\right) 2^{2n}+16}{\left(
2^{2n+2}-6n-10\right) \left( 2^{2n}-6n-4\right) }>0,
\end{eqnarray*}%
which shows that the sequence $\left( c_{n}-8a_{n}/5\right) /\left(
b_{n}-3a_{n}\right) $ is increasing with $n\geq 3$, and by Lemma \ref{Lemma
monotocity of A/B} it is seen that $g_{2}$ is increasing on $(0,\pi /2)$.
Consequently, we get%
\begin{equation*}
\frac{34}{35}=\lim_{x\rightarrow 0^{+}}g_{2}\left( x\right) <g_{2}\left(
x\right) <\lim_{x\rightarrow \pi /2^{-}}g_{2}\left( x\right) =1,
\end{equation*}%
which proves the lemma.
\end{proof}

Now we state and prove the monotonicity of $T_{p,q}$.

\begin{proposition}
\label{P main1}Let $T_{p,q}$ be defined on $(0,\pi /2$ by (\ref{T}). Then

(i) when $q>1$, $T_{p,q}$ is increasing on $(0,\pi /2)$ for $p\leq 3q-8/5$;

(ii) when $q=1$, $T_{p,q}$ is increasing on $(0,\pi /2)$ for $p\leq 7/5$ and
decreasing on $(0,\pi /2)$ for $p\geq \pi ^{2}/4-1$;

(iii) when $34/35<q<1$, $T_{p,q}$ is decreasing on $(0,\pi /2)$ for $p\geq
\pi ^{2}/4-1$;

(iv) when $q\leq 34/35$, $T_{p,q}$ is decreasing on $(0,\pi /2)$ for $p\geq
3q-8/5$.
\end{proposition}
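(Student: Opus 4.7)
The plan is to reduce the monotonicity of $T_{p,q}$ to the sign of $f_2(x) = pA(x) - qB(x) + C(x)$ on $(0,\pi/2)$, then read off cases (i), (ii), and (iv) from the bounds on $g_1 = (qB-C)/A$ supplied by Lemma \ref{Lemma g1}, and dispose of the delicate case (iii) by a direct monotonicity-in-$q$ argument.

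Since $S_p(0^+) = C_q(0^+) = 0$ and $C_q'(x) > 0$ on $(0,\pi/2)$, I would write $T_{p,q}(x) = (S_p(x)-S_p(0^+))/(C_q(x)-C_q(0^+))$ and invoke Lemma \ref{Lemma monotonicity of ratio}, so that monotonicity of $T_{p,q}$ reduces to monotonicity of $f_1 = S_p'/C_q'$ from (\ref{f1}). The formula (\ref{df1}) expresses $f_1'(x)$ as a strictly negative factor times $f_2(x)$ on $(0,\pi/2)$; hence $f_1$ is increasing iff $f_2 \le 0$ and decreasing iff $f_2 \ge 0$. Because $A(x) > 0$, these conditions translate respectively to $p \le g_1(x)$ and $p \ge g_1(x)$ for all $x \in (0,\pi/2)$. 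Case (i) then follows from Lemma \ref{Lemma g1}(i), since $q > 1$ gives $g_1(x) > 3q-8/5 \ge p$; case (ii) follows because for $q=1$, $g_1$ is increasing with $\inf g_1 = 7/5$ and $\sup g_1 = \pi^2/4-1$, so $p \le 7/5$ yields $p \le g_1$ and $p \ge \pi^2/4-1$ yields $p \ge g_1$ everywhere; case (iv) follows from Lemma \ref{Lemma g1}(ii), since $q \le 34/35$ gives $g_1(x) < 3q-8/5 \le p$.

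Case (iii) is the subtle one, as Lemma \ref{Lemma g1} no longer supplies a uniform upper bound on $g_1$ when $34/35 < q < 1$. I would handle it by the trivial but decisive observation that $\partial f_2/\partial q = -B(x) < 0$, so $q \mapsto f_2(x;p,q)$ is strictly decreasing for each fixed $x$ and $p$. Consequently, for $q < 1$, $f_2(x;p,q) > f_2(x;p,1)$, and by case (ii) the right-hand side is nonnegative whenever $p \ge \pi^2/4-1$. Hence $f_2 > 0$ on $(0,\pi/2)$ and $T_{p,q}$ is decreasing. The main obstacle I anticipate is the sign bookkeeping in the reduction from $f_1'$ to $f_2$; once that is in place, cases (i), (ii), and (iv) are immediate from Lemma \ref{Lemma g1}, and case (iii) collapses to case (ii) via the $q$-monotonicity of $f_2$.
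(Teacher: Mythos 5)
Your proposal is correct and follows essentially the same route as the paper: reduce to the sign of $f_{2}=pA-qB+C$ via (\ref{df1}) and Lemma \ref{Lemma monotonicity of ratio}, read off cases (i), (ii), (iv) from the bounds on $g_{1}$ in Lemma \ref{Lemma g1}, and settle case (iii) by comparing with $q=1$ (your observation $\partial f_{2}/\partial q=-B<0$ is exactly the paper's step $qB(x)<B(x)$ written as monotonicity in $q$). No gaps.
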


\begin{proof}
As mentioned previously, to derive the monotonicity of $T_{p,q}$, it
suffices to deal with the sings of $f_{2}\left( x\right) $ on $\left( 0,\pi
/2\right) $. To this end, we need to write $f_{2}\left( x\right) $ in the
form of%
\begin{equation}
f_{2}\left( x\right) =pA\left( x\right) -qB\left( x\right) +C\left( x\right)
=A\left( x\right) \left( p-g_{1}\left( x\right) \right) ,  \label{f2another}
\end{equation}%
where $g_{1}\left( x\right) $ is defined by (\ref{g1}). Then, $\func{sgn}%
f_{2}\left( x\right) =\func{sgn}\left( p-g_{1}\left( x\right) \right) $ due
to $A\left( x\right) >0$ for $x\in (0,\pi /2)$.

(i) When $q>1$, it is obtained from Lemma \ref{Lemma g1} that $f_{2}\left(
x\right) <0$ for $x\in \left( 0,\pi /2\right) $ due to 
\begin{equation*}
p-g_{1}\left( x\right) \leq p-\left( 3q-\frac{8}{5}\right) \leq 0
\end{equation*}%
Utilizing the relation (\ref{df1})\ and Lemma \ref{Lemma monotonicity of
ratio} we get the first assertion in this theorem.

(ii) When $q=1$, similarly, it is acquired that $f_{2}\left( x\right) <0$
due to $\left( p-g_{1}\left( x\right) \right) \leq 3\times 1-8/5=7/5$ and $%
f_{2}\left( x\right) >0$ due to 
\begin{equation*}
p-g_{1}\left( x\right) \geq p-\left( \frac{\pi ^{2}}{4}-1\right) \geq 0\text{%
.}
\end{equation*}%
Make use of the relation (\ref{df1})\ and Lemma \ref{Lemma monotonicity of
ratio} again, the second assertion in this theorem follows.

(iii) When $34/35<q<1$, we see that $f_{2}\left( x\right) >0$ in view of%
\begin{equation*}
p-g_{1}\left( x\right) =p-\tfrac{qB\left( x\right) -C\left( x\right) }{%
A\left( x\right) }>p-\tfrac{1\times B\left( x\right) -C\left( x\right) }{%
A\left( x\right) }\geq p-\left( \frac{\pi ^{2}}{4}-1\right) \geq 0\text{.}
\end{equation*}

(iv) When $q\leq 34/35$, it can be proved in the same way.

Thus we complete the proof.
\end{proof}

Letting $p=3q-8/5$ in Proposition \ref{P main1}, we have

\begin{corollary}
\label{Corollary p=3q-8/5}Let $T_{p,q}$ be defined on $(0,\pi /2$ by (\ref{T}%
). Then $T_{3q-8/5,q}$ is increasing on $(0,\pi /2)$ for $q\geq 1$ and
decreasing for $q\leq 34/35$.
\end{corollary}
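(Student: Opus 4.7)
The plan is to observe that this corollary is an immediate specialization of Proposition \ref{P main1} obtained by setting $p=3q-8/5$, so no new analytic work is needed; the entire task reduces to checking that the chosen value of $p$ meets the hypotheses of the relevant clauses of the proposition.

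First I would handle the case $q\geq 1$ by splitting it into $q>1$ and $q=1$. When $q>1$, clause (i) of Proposition \ref{P main1} yields that $T_{p,q}$ is increasing on $(0,\pi/2)$ for every $p\leq 3q-8/5$, and the choice $p=3q-8/5$ saturates this bound; hence $T_{3q-8/5,q}$ is increasing. When $q=1$, we compute $3q-8/5=3-8/5=7/5$, and clause (ii) of Proposition \ref{P main1} gives monotone increase precisely for $p\leq 7/5$, so again $T_{7/5,1}=T_{3\cdot 1-8/5,\,1}$ is increasing. These two subcases together give the first assertion.

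Next I would treat $q\leq 34/35$ using clause (iv), which states that $T_{p,q}$ is decreasing on $(0,\pi/2)$ whenever $p\geq 3q-8/5$. The choice $p=3q-8/5$ again realizes the boundary case and so forces $T_{3q-8/5,q}$ to be decreasing. This closes the second assertion and completes the argument.

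There is no real obstacle here, since the corollary is purely a bookkeeping consequence of Proposition \ref{P main1}; the only thing worth double-checking is the arithmetic identity $3\cdot 1-8/5=7/5$, which ensures that the boundary value $p=3q-8/5$ lies in the increasing regime of clause (ii) at $q=1$ and is what allows the two subcases $q>1$ and $q=1$ to be merged into the single statement for $q\geq 1$.
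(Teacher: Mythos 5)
Your proposal is correct and follows exactly the paper's route: the paper derives this corollary simply by ``letting $p=3q-8/5$ in Proposition \ref{P main1},'' which is precisely your specialization via clauses (i), (ii) (using $3\cdot 1-8/5=7/5$), and (iv). Nothing is missing.
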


Since $p\leq \left( \geq \right) 3q-8/5$ is equivalent to $q\geq \left( \leq
\right) p/3+8/15$, Proposition \ref{P main1} can be restated as follows.

\begin{proposition}
\label{P main2}Let $T_{p,q}$ be defined on $(0,\pi /2$ by (\ref{T}). Then

(i) $T_{p,q}$ is increasing on $(0,\pi /2)$ if $q\geq \max \left(
1,p/3+8/15\right) $;

(ii) $T_{p,q}$ is decreasing on $(0,\pi /2)$ if $q\leq \min \left(
34/35,p/3+8/15\right) $ or $p\geq \pi ^{2}/4-1$ and $34/35<q\leq 1$.
\end{proposition}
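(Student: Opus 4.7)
The plan is to recognize that Proposition \ref{P main2} is simply a cosmetic reformulation of Proposition \ref{P main1}, obtained via the elementary algebraic equivalence
\[
p \leq 3q - \tfrac{8}{5} \iff q \geq \tfrac{p}{3} + \tfrac{8}{15},
\]
with the reverse inequalities equivalent in the same way. Since Proposition \ref{P main1} is already established, no new analytic work is needed; I only need to check that the hypotheses in Proposition \ref{P main2} partition correctly into the four cases of Proposition \ref{P main1}.

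For part (i), I would split the assumption $q \geq \max(1, p/3 + 8/15)$ into two subcases. If $q > 1$, then $q \geq p/3 + 8/15$ is equivalent to $p \leq 3q - 8/5$, which is precisely case (i) of Proposition \ref{P main1}. If $q = 1$, then $q \geq p/3 + 8/15$ reduces to $p \leq 7/5$, which matches the increasing clause of case (ii) of Proposition \ref{P main1}.

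For part (ii), I would split on $q$ in the same spirit. The first alternative $q \leq \min(34/35, p/3 + 8/15)$ is equivalent to $q \leq 34/35$ together with $p \geq 3q - 8/5$, which is case (iv) of Proposition \ref{P main1}. The second alternative, $p \geq \pi^2/4 - 1$ with $34/35 < q \leq 1$, is obtained by combining case (iii) of Proposition \ref{P main1} (when $34/35 < q < 1$) with the decreasing clause of case (ii) (when $q = 1$).

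The only point requiring a bit of care is matching open/closed boundaries: case (iii) of Proposition \ref{P main1} uses the strict inequality $34/35 < q < 1$, whereas part (ii) of Proposition \ref{P main2} uses $34/35 < q \leq 1$. As noted, the endpoint $q=1$ is absorbed by the decreasing clause of case (ii) of Proposition \ref{P main1}, so the union covers $(34/35,1]$ as claimed. There is no substantive obstacle; all the work sits in Proposition \ref{P main1}, and Proposition \ref{P main2} is merely its convenient repackaging for the intended applications in Section 3.
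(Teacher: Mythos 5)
Your proposal is correct and matches the paper exactly: the paper derives Proposition \ref{P main2} from Proposition \ref{P main1} purely via the equivalence $p\leq 3q-8/5\iff q\geq p/3+8/15$, offering no separate proof. Your case-by-case matching (including the observation that the endpoint $q=1$ in part (ii) is absorbed by the decreasing clause of case (ii) of Proposition \ref{P main1}) is simply a more explicit writing-out of the same restatement.
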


Due to%
\begin{eqnarray*}
\max \left( 1,\tfrac{p}{3}+\tfrac{8}{15}\right) &=&\left\{ 
\begin{array}{cc}
\frac{p}{3}+\frac{8}{15} & \text{if }p\geq \frac{7}{5}, \\ 
1 & \text{if }p<\frac{7}{5},%
\end{array}%
\right. \\
\min \left( \tfrac{34}{35},\tfrac{p}{3}+\tfrac{8}{15}\right) &=&\left\{ 
\begin{array}{cc}
\tfrac{34}{35} & \text{if }p\geq \frac{46}{35}, \\ 
\tfrac{p}{3}+\tfrac{8}{15} & \text{if }p<\frac{46}{35},%
\end{array}%
\right.
\end{eqnarray*}%
Position \ref{P main2} also can be restated in another equivalent assertion.

\begin{proposition}
\label{P main3}Let $T_{p,q}$ be defined on $(0,\pi /2$ by (\ref{T}). Then

(i) if $p\geq \pi ^{2}/4-1$, then $T_{p,q}$ is increasing on $(0,\pi /2)$
for $q\geq p/3+8/15$ and decreasing on $(0,\pi /2)$ for $q\leq 1$;

(ii) if $p\in \lbrack 7/5,\pi ^{2}/4-1)$, then $T_{p,q}$ is increasing on $%
(0,\pi /2)$ for $q\geq p/3+8/15$ and decreasing on $(0,\pi /2)$ for $q\leq
34/35$;

(iii) if $p\in \lbrack 46/35,7/5)$, then $T_{p,q}$ is increasing on $(0,\pi
/2)$ for $q\geq 1$ and decreasing on $(0,\pi /2)$ for $q\leq 34/35$;

(iv) if $p<46/35$, then $T_{p,q}$ is increasing on $(0,\pi /2)$ for $q\geq 1$
and decreasing on $(0,\pi /2)$ for $q\leq p/3+8/15$.
\end{proposition}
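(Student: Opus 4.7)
The plan is to derive Proposition \ref{P main3} as a pure reformulation of Proposition \ref{P main2}, obtained by case-splitting on $p$ according to the threshold values $46/35,\ 7/5,\ \pi^{2}/4-1$, using the two piecewise expressions for $\max(1,p/3+8/15)$ and $\min(34/35,p/3+8/15)$ displayed just before the statement. No new analytic information about $T_{p,q}$ will be introduced; the proof is entirely one of bookkeeping.

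First I would record the elementary equivalences $\max\!\left(1,\tfrac{p}{3}+\tfrac{8}{15}\right)=\tfrac{p}{3}+\tfrac{8}{15}$ iff $p\geq \tfrac{7}{5}$, and $\min\!\left(\tfrac{34}{35},\tfrac{p}{3}+\tfrac{8}{15}\right)=\tfrac{p}{3}+\tfrac{8}{15}$ iff $p\leq \tfrac{46}{35}$, together with the numerical chain $\tfrac{46}{35}<\tfrac{7}{5}<\tfrac{\pi^{2}}{4}-1$ which ensures the four $p$-intervals in the statement partition $\mathbb{R}$. For each of the four cases I would then substitute the appropriate branch of the max into Proposition \ref{P main2}(i) to read off the "increasing" range of $q$, and the appropriate branch of the min into the first clause of Proposition \ref{P main2}(ii) to read off the "decreasing" range of $q$. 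Cases (ii), (iii), and (iv) follow immediately in this manner.

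The only part that requires slightly more care is case (i), where the unified decreasing range "$q\leq 1$" must be assembled from both clauses of Proposition \ref{P main2}(ii). For $q\leq \tfrac{34}{35}$ the conclusion comes from the first clause with $\min=\tfrac{34}{35}$, valid since $p\geq \tfrac{\pi^{2}}{4}-1>\tfrac{46}{35}$. For $\tfrac{34}{35}<q\leq 1$ it comes from the second clause, which is precisely the hypothesis $p\geq \tfrac{\pi^{2}}{4}-1$ together with $\tfrac{34}{35}<q\leq 1$. Gluing the two sub-intervals yields the claimed range $q\leq 1$.

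I do not foresee any genuine obstacle: every analytic content has been absorbed into Proposition \ref{P main2}, and Proposition \ref{P main3} is logically equivalent to it. The only points meriting explicit mention in the write-up are the numerical comparison of the three thresholds and the two-piece assembly in case (i); everything else is a direct substitution.
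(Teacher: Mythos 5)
Your proposal is correct and matches the paper exactly: the paper presents Proposition \ref{P main3} as nothing more than an equivalent restatement of Proposition \ref{P main2} via the displayed piecewise formulas for $\max\left(1,\tfrac{p}{3}+\tfrac{8}{15}\right)$ and $\min\left(\tfrac{34}{35},\tfrac{p}{3}+\tfrac{8}{15}\right)$, offering no further proof. Your explicit handling of case (i), gluing $q\leq\tfrac{34}{35}$ from the first clause with $\tfrac{34}{35}<q\leq 1$ from the second clause of Proposition \ref{P main2}(ii), is the one point the paper leaves tacit, and you have it right.
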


Let $p=kq$. Then solving the simultaneous inequalities $q\geq \max \left(
1,kq/3+8/15\right) $ and $q\leq \min \left( 34/35,kq/3+8/15\right) $ for $q$
give 
\begin{equation*}
\left\{ 
\begin{array}{ll}
q\geq \frac{8}{5\left( 3-k\right) } & \text{if }k\in \lbrack \frac{7}{5},3),
\\ 
q\geq 1 & \text{if }k\in (-\infty ,\frac{7}{5}]%
\end{array}%
\right. \text{ \ and \ }\left\{ 
\begin{array}{ll}
\frac{8}{5\left( 3-k\right) }\leq q\leq 34/35 & \text{if }k\in \left(
3,\infty \right) , \\ 
q\leq \frac{34}{35} & \text{if }k\in \lbrack \frac{23}{17},3), \\ 
q\leq \frac{8}{5\left( 3-k\right) } & \text{if }k\in (-\infty ,\frac{23}{17}%
],%
\end{array}%
\right.
\end{equation*}%
respectively; while the solution of the simultaneous inequalities $kq\geq
\pi ^{2}/4-1$ and $34/35<q\leq 1$ is:%
\begin{equation*}
\left\{ 
\begin{array}{ll}
34/35<q\leq 1 & \text{if }k\geq \frac{35\pi ^{2}-140}{136}\approx 1.5106, \\ 
\frac{\pi ^{2}-4}{4k}\leq q\leq 1 & \text{if }k\in \left( \frac{\pi ^{2}-4}{4%
},\frac{35\pi ^{2}-140}{136}\right) .%
\end{array}%
\right.
\end{equation*}%
By Proposition \ref{P main2}, we have

\begin{corollary}
\label{Corollary p=kq}Let $T_{p,q}$ be defined on $(0,\pi /2)$ by (\ref{T}).
Then

(i) when $k\in \left( 3,\infty \right) $, $T_{kq,q}$ is decreasing for $%
8/\left( 5\left( 3-k\right) \right) \leq q\leq 34/35$;

(ii) when $k\in \lbrack \left( 35\pi ^{2}-140\right) /136,3)$, $T_{kq,q}$ is
increasing for $q\geq 8/\left( 5\left( 3-k\right) \right) $ and decreasing
for $q\leq 1$;

(iii) when $k\in \lbrack \pi ^{2}/4-1,\left( 35\pi ^{2}-140\right) /136)$, $%
T_{kq,q}$ is increasing for $q\geq 8/\left( 5\left( 3-k\right) \right) $ and
decreasing for $q\leq 34/35$ or $\left( \pi ^{2}/4-1\right) /k\leq q\leq 1$;

(iv) when $k\in \lbrack 7/5,\pi ^{2}/4-1),T_{kq,q}$ is increasing for $q\geq
8/\left( 5\left( 3-k\right) \right) $ and decreasing for $q\leq 34/35$;

(v) when $k\in \lbrack 23/17,7/5)$, $T_{kq,q}$ is increasing for $q\geq 1$
and decreasing for $q\leq 34/35$;

(vi) when $k\in \left( -\infty ,23/17\right) $, $T_{kq,q}$ is increasing for 
$q\geq 1$ and decreasing for $q\leq 8/\left( 5\left( 3-k\right) \right) $.
\end{corollary}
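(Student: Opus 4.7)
The plan is to specialize Proposition \ref{P main2} to $p=kq$ and then partition the $k$-axis according to which sub-constraints on $q$ are active. The author has already displayed the three relevant solution sets immediately above the corollary, so the remaining work is essentially bookkeeping: pick the correct combination of "increasing bound'' and "decreasing bound(s)'' on each of the six $k$-intervals.

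First I would record what each monotonicity clause of Proposition \ref{P main2} becomes under $p=kq$. The increasing condition $q\geq\max(1,p/3+8/15)$ turns into the conjunction $q\geq 1$ and $q(3-k)\geq 8/5$. These two constraints swap dominance at $k=7/5$, since $8/(5(3-7/5))=1$, and have no simultaneous positive solution once $k\geq 3$. The first decreasing condition $q\leq\min(34/35,p/3+8/15)$ becomes $q\leq 34/35$ together with $q(3-k)\leq 8/5$; their crossover occurs at $k=23/17$, where $8/(5(3-23/17))=34/35$. Finally, the auxiliary decreasing condition "$p\geq\pi^{2}/4-1$ with $34/35<q\leq 1$'' becomes $q\geq(\pi^{2}/4-1)/k$ intersected with $(34/35,1]$; this is vacuous for $k<\pi^{2}/4-1$, contributes the interval $[(\pi^{2}/4-1)/k,1]$ for $\pi^{2}/4-1\leq k<(35\pi^{2}-140)/136$, and collapses to $(34/35,1]$ for $k\geq(35\pi^{2}-140)/136$, the cutoff being fixed by the identity $(\pi^{2}/4-1)/((35\pi^{2}-140)/136)=34/35$.

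With the cutoffs ordered as $23/17<7/5<\pi^{2}/4-1<(35\pi^{2}-140)/136<3$, I would then sweep through the six $k$-intervals in the corollary and, for each one, paste together the appropriate pieces: the increasing bound is either $q\geq 1$ (for $k<7/5$) or $q\geq 8/(5(3-k))$ (for $7/5\leq k<3$), and the decreasing bound is either $q\leq 8/(5(3-k))$ (for $k<23/17$) or $q\leq 34/35$ (for $k\geq 23/17$), augmented by the auxiliary interval whenever $k\geq \pi^{2}/4-1$. Matching the resulting list to items (i)--(vi) is direct; in particular case (i) (where increasing is impossible) records the first decreasing bound in the redundant form $8/(5(3-k))\leq q\leq 34/35$, which for $k>3$ is automatic on the lower side.

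There is no real analytic content beyond Proposition \ref{P main2}; the main obstacle is purely combinatorial, namely keeping the five cutoff values in the correct order and not confusing which of the three solution sets is active on each piece. The three arithmetic identities displayed above are the only computations that need to be verified, and once they are the six cases follow by inspection.
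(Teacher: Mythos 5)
Your proposal is correct and follows essentially the same route as the paper: substitute $p=kq$ into Proposition \ref{P main2}, solve the three resulting systems of inequalities for $q$, and assemble the six cases according to the crossover values $23/17$, $7/5$, $\pi^{2}/4-1$, $(35\pi^{2}-140)/136$ and $3$, exactly as the paper does in the displayed computations immediately preceding the corollary. The three arithmetic identities you isolate are the right ones and all check out, so nothing further is needed.
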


\section{Results and proofs}

In this section, we will give some new inequalities involving trigonometric
functions by using monotonicity theorems given in previous section. For
clarity of expressions, we will directly write $S_{p}\left( x\right)
,C_{q}\left( x\right) ,T_{p,q}\left( x\right) $ etc. by their general
formulas, and if $pq=0$, then we regard them as limits at $p=0$ or $q=0$,
unless otherwise specified.

\subsection{In the general case}

A simple computation yields 
\begin{equation*}
T_{p,q}\left( 0^{+}\right) =\frac{1}{3}\text{ \ and \ }T_{p,q}\left( \frac{%
\pi }{2}^{-}\right) =\left\{ 
\begin{array}{ll}
\frac{q}{p}\left( 1-\left( \frac{2}{\pi }\right) ^{p}\right) & \text{if }%
q>0,p\neq 0, \\ 
-q\ln \frac{2}{\pi } & \text{if }q>0,p=0, \\ 
0 & \text{if }q\leq 0.%
\end{array}%
\right.
\end{equation*}%
And then, by Proposition \ref{P main1}, we obtain the following theorem.

\begin{theorem}
\label{MT1 general}Let $x\in (0,\pi /2)$.

(i) If $q\geq 1$ and $p\leq 3q-8/5$, then the inequalities 
\begin{eqnarray}
\left( \tfrac{2}{\pi }\right) ^{p}+\left( 1-\left( \tfrac{2}{\pi }\right)
^{p}\right) \cos ^{q}x &<&\left( \tfrac{\sin x}{x}\right) ^{p}<1-\tfrac{p}{3q%
}+\tfrac{p}{3q}\cos ^{q}x\text{ if }p>0,  \label{M1p>0} \\
\left( \tfrac{2}{\pi }\right) ^{1-\cos ^{q}x} &<&\tfrac{\sin x}{x}<\exp 
\tfrac{\cos ^{q}x-1}{3q}\text{ if }p=0,  \label{M1p=0} \\
1-\tfrac{p}{3q}+\tfrac{p}{3q}\cos ^{q}x &<&\left( \tfrac{\sin x}{x}\right)
^{p}<\left( \tfrac{2}{\pi }\right) ^{p}+\left( 1-\left( \tfrac{2}{\pi }%
\right) ^{p}\right) \cos ^{q}x\text{ if }p<0  \label{M1p<0}
\end{eqnarray}%
hold, where $1/3$ and $q\left( 1-\left( 2/\pi \right) ^{p}\right) /p$ are
the best constants.

(ii) If $34/35<q\leq 1$ and $p\geq \pi ^{2}/4-1$, then the double
inequalities (\ref{M1p>0}) is reversed.

(iii) If $0<q\leq 34/35$ and $p\geq 3q-8/5$, then all the double
inequalities (\ref{M1p>0}), (\ref{M1p=0}) and (\ref{M1p<0}) are reversed.

(iv) If $q\leq 0$ and $p\geq 3q-8/5$, then the inequalities%
\begin{eqnarray*}
\left( \frac{\sin x}{x}\right) ^{p} &>&1-\frac{p}{3q}+\frac{p}{3q}\cos ^{q}x%
\text{ if }p>0, \\
\frac{\sin x}{x} &>&\exp \frac{\cos ^{q}x-1}{3q}\text{ if }p=0, \\
\left( \frac{\sin x}{x}\right) ^{p} &<&1-\frac{p}{3q}+\frac{p}{3q}\cos ^{q}x%
\text{ if }p<0,
\end{eqnarray*}%
hold, where $1/3$ is the best constant.
\end{theorem}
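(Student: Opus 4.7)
The plan is to reduce each case of the theorem to Proposition \ref{P main1} and then read off the desired inequalities from the limiting values of $T_{p,q}$ at the endpoints. Since $T_{p,q}$ is monotone on $(0,\pi/2)$ under the stated hypotheses, the chain $T_{p,q}(0^{+})<T_{p,q}(x)<T_{p,q}(\pi/2^{-})$ (or its reverse) holds throughout the interval, and the only work is to unwind it into the target Cusa-type form and to verify sharpness.

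First I would record the two endpoint values. At the left endpoint, the Taylor expansions $\sin x/x=1-x^{2}/6+O(x^{4})$ and $\cos x=1-x^{2}/2+O(x^{4})$ yield $S_{p}(x)\sim x^{2}/6$ and $C_{q}(x)\sim x^{2}/2$ as $x\to 0^{+}$, so $T_{p,q}(0^{+})=1/3$ for every $p,q$. At the right endpoint, substituting $\sin x/x=2/\pi$ and $\cos x=0$ into the piecewise definition (\ref{T}) gives, for $q>0$, the value $(q/p)\bigl(1-(2/\pi)^{p}\bigr)$ when $p\ne 0$ and $-q\ln(2/\pi)$ when $p=0$; for $q\le 0$ the denominator $C_{q}$ blows up while $S_{p}$ stays bounded, hence $T_{p,q}(\pi/2^{-})=0$. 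These are exactly the three boundary values listed just before the theorem.

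The conversion step is purely algebraic. In case (i) with $p,q>0$, Proposition \ref{P main1}(i) gives $T_{p,q}$ increasing, so $1/3<T_{p,q}(x)<(q/p)(1-(2/\pi)^{p})$; multiplying through by the positive quantity $(p/q)(1-\cos^{q}x)$ and rearranging produces (\ref{M1p>0}). For $p<0$ with $q\ge 1$ the factor $q/p$ is negative, so each of the two multiplications flips the direction; the net effect is precisely the swapped inequality (\ref{M1p<0}). The case $p=0$ is parallel, using the logarithmic form of $S_{0}$ and exponentiating at the end to obtain (\ref{M1p=0}). Cases (ii) and (iii) are identical in structure, with the relevant parts of Proposition \ref{P main1} supplying the reversed monotonicity so that every inequality above simply flips.

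Case (iv) is asymmetric: since $T_{p,q}(\pi/2^{-})=0<1/3=T_{p,q}(0^{+})$ and $T_{p,q}$ is decreasing by Proposition \ref{P main1}(iv), only the bound $T_{p,q}(x)<1/3$ carries new content (the companion inequality $T_{p,q}(x)>0$ is already supplied by Lemma \ref{Lemma u_p}). Unwinding $T_{p,q}(x)<1/3$ through the sign analysis in the three subcases $p>0$, $p=0$, and $p<0$ yields exactly the three one-sided inequalities stated in (iv). Sharpness of the constants $1/3$ and $q(1-(2/\pi)^{p})/p$ in (i)--(iii) is automatic, because both are attained in the one-sided limits as $x\to 0^{+}$ and $x\to\pi/2^{-}$ respectively, so no strictly tighter constant can hold uniformly on $(0,\pi/2)$. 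I anticipate no conceptual difficulty; the only nuisance is careful sign tracking when $p$ and $q$ have opposite signs and in the logarithmic limit $p=0$.
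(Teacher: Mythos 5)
Your proposal is correct and follows essentially the same route as the paper: the author likewise invokes Proposition \ref{P main1} to get $T_{p,q}(0^{+})<T_{p,q}(x)<T_{p,q}(\pi/2^{-})$, rewrites this as $\tfrac{1}{3}C_{q}(x)<S_{p}(x)<T_{p,q}(\pi/2^{-})C_{q}(x)$, and unwinds by sign of $p$, with sharpness read off from the attained endpoint limits. Your extra remarks on case (iv) and on the sign-flips for $p<0$ are consistent with what the paper leaves implicit.
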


\begin{proof}
We only prove (i), others can be proved in the same way. By part (i) of
Proposition \ref{P main1}, if $q\geq 1$ and $p\leq 3q-8/5$, then we get $%
T_{p,q}\left( 0^{+}\right) <T_{p,q}\left( x\right) <T_{p,q}\left( \pi
/2^{-}\right) $, that is,%
\begin{equation*}
\frac{1}{3}C_{q}\left( x\right) <S_{p}\left( x\right) <T_{p,q}\left( \pi
/2^{-}\right) C_{q}\left( x\right) ,
\end{equation*}%
which is equivalent to (\ref{M1p>0}), (\ref{M1p=0}) and (\ref{M1p<0}). This
completes the proof.
\end{proof}

\begin{remark}
Letting $p=q$ in Theorem \ref{MT1 general} yields Theorem Zhu \ref{Zhu}. It
can be seen that our result is a generalization of Zhu's \cite%
{Zhu.CMA.58(2009)}.
\end{remark}

\begin{remark}
\label{Remark T-M-N}For $0<a<b$ and $\left( w,q\right) \in \Omega
_{w,q}=\{q\geq 0,w\leq 1$ or $q\leq 0,w\geq 0\}$, we define $M_{q}$ by%
\begin{equation}
M_{q}(a,b;w)=(wa^{q}+\left( 1-w\right) b^{q})^{1/q}\text{ if }q\neq 0\text{
\ and \ }M_{0}(a,b;w)=a^{w}b^{1-w}.  \label{M_q}
\end{equation}%
It is clear that $M_{q}(a,b;w)$ is a weighted power mean of order $q$ of $a$
and $b$ for $w\in \left( 0,1\right) $, but not a mean of $a$ and $b$ for $%
w\geq 1,q\leq 0$ or $w\leq 0,q\geq 0$ because that%
\begin{equation*}
M_{q}(a,b;w)\leq a\text{ for }w\geq 1,q\leq 0\text{ and}\ M_{q}(a,b;w)\geq b%
\text{ for }w\leq 0,q\geq 0.
\end{equation*}

When $w=p/\left( 3q\right) ,1-\left( 2/\pi \right) ^{p}$, that $\left(
w,q\right) \in \Omega _{w,q}$ implies that%
\begin{equation*}
\left( p,q\right) \in E_{p,q}=\{p\leq 0\text{ or }0<p\leq 3q\}\text{ \ and \ 
}\left( p,q\right) \in \mathbb{R}\times \mathbb{R}_{+},
\end{equation*}%
respectively. Thus, inequalities (\ref{M1p>0}), (\ref{M1p=0}) and (\ref%
{M1p<0}) can be unified into one:%
\begin{equation}
M_{q}^{q/p}\left( \cos x,1;1-\left( 2/\pi \right) ^{p}\right) <\frac{\sin x}{%
x}<M_{q}^{q/p}(\cos x,1;p/\left( 3q\right) ).  \label{N<sinx/x<M}
\end{equation}%
For convenience, we denote by%
\begin{equation*}
M_{q}^{q/p}(\cos x,1;p/\left( 3q\right) )=M(\cos x;p.q)\text{ \ and \ }%
M_{q}^{q/p}\left( \cos x,1;1-\left( 2/\pi \right) ^{p}\right) =N(\cos x;p.q).
\end{equation*}%
Then,%
\begin{equation}
M(t;p,q)=\left\{ 
\begin{array}{ll}
\left( 1-\frac{p}{3q}+\frac{p}{3q}t^{q}\right) ^{1/p} & \text{if }pq\neq
0,\left( p,q\right) \in E_{p,q}, \\ 
\exp \frac{t^{q}-1}{3q} & \text{if }p=0,q\neq 0, \\ 
\left( \frac{p}{3}\ln t+1\right) ^{1/p} & \text{if }p<0,q=0, \\ 
t^{1/3} & \text{if }p=q=0,%
\end{array}%
\right.  \label{M}
\end{equation}%
\begin{equation}
N(t;p,q)=\left\{ 
\begin{array}{ll}
\left( \left( \frac{2}{\pi }\right) ^{p}+\left( 1-\left( \frac{2}{\pi }%
\right) ^{p}\right) t^{q}\right) ^{1/p} & \text{if }p\neq 0, \\ 
\left( \frac{2}{\pi }\right) ^{1-t^{q}} & \text{if }p=0.%
\end{array}%
\right.  \label{N}
\end{equation}

And then, we have the following assertions:

(i) For $x\in (0,\pi /2)$, if $\left( p,q\right) \in E_{p,q}=\{\left(
p,q\right) :p\leq 0$ or $0<p\leq 3q\}$, then 
\begin{equation*}
T_{p,q}\left( x\right) >\left( <\right) T_{p,q}\left( 0^{+}\right)
\Longleftrightarrow \frac{\sin x}{x}<\left( >\right) M\left( \cos
x;p,q\right) .
\end{equation*}

(ii) For $x\in (0,\pi /2)$, if $\left( p,q\right) \in \mathbb{R}\times 
\mathbb{R}_{+}$, then 
\begin{equation*}
T_{p,q}\left( x\right) <\left( >\right) T_{p,q}\left( \pi /2^{-}\right)
\Longleftrightarrow \frac{\sin x}{x}>\left( <\right) N\left( \cos
x;p,q\right) .
\end{equation*}
\end{remark}

\begin{remark}
\label{Remark M,N}(i) For the monotonicity of $M\left( t;p,q\right) $ with
respect to $p,q$, we suggest that:

Let $E_{p,q}=\{\left( p,q\right) :p\leq 0$ or $0<p\leq 3q\}$ and let $M$ be
the function defined on $(0,1)\times E_{p,q}$ by (\ref{M}). Then $M$ is
decreasing in $p$ and increasing in $q$.

Indeed, for $pq\neq 0$, logarithmic differentiation yields%
\begin{eqnarray*}
\frac{\partial \ln M}{\partial p} &=&\frac{1}{p^{2}}\left( -\ln \left( \frac{%
p}{3q}t^{q}+1-\frac{p}{3q}\right) -\frac{p\left( 1-t^{q}\right) }{\left(
pt^{q}+3q-p\right) }\right) :=\frac{1}{p^{2}}M_{1}\left( t;p,q\right) , \\
\frac{\partial M_{1}}{\partial p} &=&-\frac{p\left( 1-t^{q}\right) ^{2}}{%
\left( pt^{q}+3q-p\right) ^{2}},
\end{eqnarray*}%
which implies that $M_{1}$ is decreasing in $p$ on $(0,\infty )$ and
increasing on $\left( -\infty ,0\right) $. Hence we have $M_{1}\left(
t;p,q\right) <M_{1}\left( t;0,q\right) =0$, and so $M$ is decreasing in $p$.

In the case of $pq\neq 0,\left( p,q\right) \in E_{p,q}$, it can be proved in
the same way.

Similarly, we have%
\begin{equation*}
\frac{\partial \ln M}{\partial q}=-\frac{t^{q}}{3q^{2}\left( \frac{p}{3q}%
t^{q}+1-\frac{p}{3q}\right) }\left( \ln t^{-q}-\left( t^{-q}-1\right)
\right) >0,
\end{equation*}%
where the last inequality holds due to $\ln x\leq x-1$ for $x>0$ and $\left(
\left( p/\left( 3q\right) \right) t^{q}+1-\left( p/\left( 3q\right) \right)
\right) >0$ for $\left( t,p,q\right) \in (0,1)\times E_{p,q}$, which proves
the monotonicity of $M$ with respect to $q$.

(ii) For the monotonicity of $N\left( t;p,q\right) $ with respect to $p,q$,
we claim that:

Let $N$ be defined on $\left( 0,1\right) \times \mathbb{R}\times \mathbb{R}%
_{+}$ by (\ref{N}).\ Then $N$ is increasing in $p$ and decreasing in $q$.

In fact, $N\left( t;p,q\right) $ can be written as%
\begin{equation*}
N\left( t;p,q\right) =\left( t^{q}+\left( 1-t^{q}\right) \left( \frac{2}{\pi 
}\right) ^{p}\right) ^{1/p}\text{ if }p\neq 0\text{ and }N\left(
t;0,q\right) =\left( \frac{2}{\pi }\right) ^{1-t^{q}},
\end{equation*}%
which is clearly a weighted power mean of order $p$ of positive numbers $1$
and $2/\pi $, and consequently, $N$ is increasing with respect to $p$.

The decreasing property of $N$ in $q$ can be derived from that for $p\neq 0$,%
\begin{equation*}
\frac{\partial N}{\partial q}=\left( t^{q}+\left( 1-t^{q}\right) \left( 
\frac{2}{\pi }\right) ^{p}\right) ^{1/p-1}\times \frac{1-\left( \frac{2}{\pi 
}\right) ^{p}}{p}\times t^{q}\ln t<0,
\end{equation*}
where the inequality is valid because that $\left( 1-\left( 2/\pi \right)
^{p}\right) /p>0$ and $t\in \left( 0,1\right) $.
\end{remark}

Utilizing Proposition \ref{P main3}, the following theorem is immediate.

\begin{theorem}
\label{MT2 generala}Let $x\in (0,\pi /2)$. Then

(i) if $p\geq \pi ^{2}/4-1$, then the double inequality%
\begin{equation}
\frac{1-\left( \cos x\right) ^{q_{2}}}{3q_{2}}<\frac{1-\left( \frac{\sin x}{x%
}\right) ^{p}}{p}<\frac{1-\left( \cos x\right) ^{q_{1}}}{3q_{1}}  \label{M2a}
\end{equation}%
holds for $q_{2}\geq p/3+8/15$ and $q_{1}\leq 1$;

(ii) if $p\in \lbrack 7/5,\pi ^{2}/4-1)$, then (\ref{M2a}) holds for $%
q_{2}\geq p/3+8/15$ and $q_{1}\leq 34/35$;

(iii) if $p\in \lbrack 46/35,7/5)$, then (\ref{M2a}) holds for $q_{2}\geq 1$
and $q_{1}\leq 34/35$;

(iv) if $p<46/35$, then (\ref{M2a}) holds for $q_{2}\geq 1$ and for $%
q_{1}\leq p/3+8/15$.
\end{theorem}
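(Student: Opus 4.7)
The plan is to reformulate \eqref{M2a} in terms of the ratio $T_{p,q}$ and then read off each case directly from Proposition \ref{P main3}. With $S_p$ and $C_q$ as in \eqref{Sp}--\eqref{Cp}, the middle expression in \eqref{M2a} is $S_p(x)$, while the outer expressions are $C_{q_2}(x)/3$ and $C_{q_1}(x)/3$ respectively. Since $C_q(x) > 0$ on $(0,\pi/2)$ by Lemma \ref{Lemma u_p}, the left inequality in \eqref{M2a} is equivalent to $T_{p,q_2}(x) > 1/3$ and the right inequality is equivalent to $T_{p,q_1}(x) < 1/3$. The computation $T_{p,q}(0^+) = 1/3$ performed just before Theorem \ref{MT1 general} therefore reduces the whole theorem to asserting suitable monotonicity of $T_{p,q}$ on $(0,\pi/2)$.

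To finish, I would invoke Proposition \ref{P main3} in each of the four parts. For (i), $p \geq \pi^2/4 - 1$: part (i) of Proposition \ref{P main3} gives that $T_{p,q_2}$ is increasing on $(0,\pi/2)$ for $q_2 \geq p/3 + 8/15$, hence $T_{p,q_2}(x) > T_{p,q_2}(0^+) = 1/3$, which yields the left inequality; the same part of Proposition \ref{P main3} gives that $T_{p,q_1}$ is decreasing for $q_1 \leq 1$, hence $T_{p,q_1}(x) < 1/3$, which yields the right inequality. Parts (ii), (iii), (iv) of the theorem are handled verbatim by substituting the corresponding monotonicity ranges from parts (ii), (iii), (iv) of Proposition \ref{P main3}.

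The proof is a direct deduction from Proposition \ref{P main3} combined with the boundary value $T_{p,q}(0^+) = 1/3$, so no real obstacle remains; the only care required is the bookkeeping needed to match each admissible $q$-range with the correct monotonicity statement in each case.
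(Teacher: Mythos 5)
Your proposal is correct and is essentially the paper's own argument: the paper simply states that Theorem \ref{MT2 generala} is ``immediate'' from Proposition \ref{P main3}, and your reduction of \eqref{M2a} to $T_{p,q_{2}}(x)>1/3$ and $T_{p,q_{1}}(x)<1/3$ via the positivity of $C_{q}$ and the limit $T_{p,q}(0^{+})=1/3$ is exactly the intended deduction. The case-by-case matching of the $q$-ranges to the monotonicity statements in Proposition \ref{P main3} is also carried out as the paper intends.
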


\begin{remark}
\label{Remark MT2changed}If $\left( p,q\right) \in E_{p,q}=\{\left(
p,q\right) :p\leq 0$ or $0<p\leq 3q\}$, then Theorem \ref{MT2 generala} can
be restated as follows: Let $x\in (0,\pi /2)$. Then

(i) if $p\geq \pi ^{2}/4-1$, then the double inequality%
\begin{equation}
\left( 1-\frac{p}{3q_{1}}+\frac{p}{3q_{1}}\cos ^{q_{1}}x\right) ^{1/p}<\frac{%
\sin x}{x}<\left( 1-\frac{p}{3q_{2}}+\frac{p}{3q_{2}}\cos ^{q_{2}}x\right)
^{1/p}  \label{M2b}
\end{equation}%
holds for $p/3\leq q_{1}\leq 1$ and $q_{2}\geq p/3+8/15$;

(ii) if $p\in \lbrack 7/5,\pi ^{2}/4-1)$, then (\ref{M2b}) holds for $%
p/3\leq q_{1}\leq 34/35$ and $q_{2}\geq p/3+8/15$;

(iii) if $p\in (46/35,7/5)$, then (\ref{M2b}) holds for $p/3\leq q_{1}\leq
34/35$ and $q_{2}\geq 1$;

(iv) if $p\in (0,46/35]$, then (\ref{M2b}) holds for $p/3\leq q_{1}\leq
p/3+8/15$ and $q_{2}\geq 1$;

(v) if $p\leq 0$, then (\ref{M2b}) holds for $q_{1}\leq p/3+8/15$ and $%
q_{2}\geq 1$.
\end{remark}

Before showing the sharp inequalities for trigonometric functions,, we give
a useful lemma.

\begin{lemma}
\label{Lemma E D_p.q=0}Let $q>0$ and let $D_{p,q}$ be defined on $(0,\pi /2)$
by 
\begin{equation}
D_{p,q}\left( x\right) =T_{p,q}\left( x\right) -\frac{1}{3}=\frac{%
S_{p}\left( x\right) }{C_{q}\left( x\right) }-\frac{1}{3}.  \label{D_p,q}
\end{equation}%
(i) We have%
\begin{eqnarray}
\lim_{x\rightarrow 0^{+}}\frac{D_{p,q}\left( x\right) }{x^{2}} &=&-\frac{1}{%
36}\left( p-3q+\frac{8}{5}\right) ,  \label{Limit1} \\
\lim_{x\rightarrow 0^{+}}\frac{D_{3q-8/5,q}\left( x\right) }{x^{4}} &=&\frac{%
1}{135}\left( q-\frac{34}{35}\right) ,  \label{Limit1s} \\
D_{p,q}\left( \pi /2^{-}\right) &=&\left\{ 
\begin{array}{ll}
\frac{q}{p}\left( 1-\left( \frac{2}{\pi }\right) ^{p}\right) -\frac{1}{3} & 
\text{if }q>0,p\neq 0, \\ 
-q\ln \frac{2}{\pi }-\frac{1}{3} & \text{if }q>0,p=0, \\ 
-\frac{1}{3} & \text{if }q\leq 0.%
\end{array}%
\right.  \label{Limit2}
\end{eqnarray}%
(ii) If $q>0$, then for fixed $p>0$, the equation $D_{p,q}\left( \pi
/2^{-}\right) =0$\ has a unique root $q(p)$ on $\mathbb{R}$ such that $%
D_{p,q}\left( \pi /2^{-}\right) >0$ for $q>q\left( p\right) $ and $%
D_{p,q}\left( \pi /2^{-}\right) <0$ for $q<q\left( p\right) $, where%
\begin{equation}
q\left( p\right) =\frac{p}{3\left( 1-\left( 2/\pi \right) ^{p}\right) }\text{
if }p\neq 0\text{ \ and \ }q\left( 0\right) =-\frac{1}{3\ln \left( 2/\pi
\right) }.  \label{q(p)}
\end{equation}%
(iii) For fixed $q>0$, the equation $D_{p,q}\left( \pi /2^{-}\right) =0$\
has a unique root $p\left( q\right) $ on $\mathbb{R}$ such that $%
D_{p,q}\left( \pi /2^{-}\right) >0$ for $p<p\left( q\right) $ and $%
D_{p,q}\left( \pi /2^{-}\right) <0$ for $p>p\left( q\right) $, where $%
p\left( q\right) $ is the inverse function of $q\left( p\right) $. In
particular, we have%
\begin{equation}
p_{0}=p\left( 1\right) \approx 1.42034\text{ \ and \ }p_{0}^{\ast }=p\left( 
\tfrac{34}{35}\right) \approx 1.27754;  \label{p_0,p_0*}
\end{equation}

(iv) Both functions $q\mapsto p\left( q\right) $ and $p\mapsto q\left(
p\right) $ are increasing.
\end{lemma}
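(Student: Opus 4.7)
\smallskip

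\noindent\textbf{Proof plan for Lemma~\ref{Lemma E D_p.q=0}.}

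For part (i), the plan is to compute the Maclaurin expansions of $S_p(x)$ and $C_q(x)$ up to order $x^{6}$ and then divide. Using $\sin x/x = 1-\tfrac{x^{2}}{6}+\tfrac{x^{4}}{120}-\tfrac{x^{6}}{5040}+O(x^{8})$ and $\cos x = 1-\tfrac{x^{2}}{2}+\tfrac{x^{4}}{24}-\tfrac{x^{6}}{720}+O(x^{8})$, a binomial expansion of $(1+u)^{p}$ and $(1+v)^{q}$ gives
\[
S_{p}(x)=\tfrac{1}{6}x^{2}-\tfrac{5p-2}{360}x^{4}+a_{2}(p)x^{6}+O(x^{8}),\qquad
C_{q}(x)=\tfrac{1}{2}x^{2}-\tfrac{3q-2}{24}x^{4}+b_{2}(q)x^{6}+O(x^{8}),
\]
(the cases $p=0$ or $q=0$ follow by taking the limit in $p$, $q$, or directly from $\log$ expansions). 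Dividing and subtracting $1/3$ gives
\[
D_{p,q}(x)=-\tfrac{1}{36}\bigl(p-3q+\tfrac{8}{5}\bigr)x^{2}+\gamma(p,q)x^{4}+O(x^{6}),
\]
which yields the first limit immediately. Substituting $p=3q-8/5$ collapses $a_{1}=b_{1}/3$, so most cross-terms in $\gamma(3q-8/5,q)$ cancel and the expression reduces (after arithmetic) to $\tfrac{1}{135}(q-\tfrac{34}{35})$. The third formula for $D_{p,q}(\pi/2^{-})$ is a direct substitution of $(\sin x)/x\to 2/\pi$ and $\cos x\to 0$ in the piecewise definition of $T_{p,q}$ given by (\ref{T}).

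For parts (ii) and (iii), I will use the explicit formula in (\ref{Limit2}). In the regime $q>0$, $p\neq 0$, the expression $D_{p,q}(\pi/2^{-})=(q/p)(1-(2/\pi)^{p})-1/3$ is \emph{affine in $q$} with slope $(1-(2/\pi)^{p})/p$, which is positive for every $p\neq 0$ (both factors share the same sign). Hence $D_{p,q}(\pi/2^{-})=0$ has the unique solution $q(p)=p/(3(1-(2/\pi)^{p}))$, and the sign behaviour in $q$ is determined by this slope. In the regime $q\leq 0$ we have $D_{p,q}(\pi/2^{-})=-1/3<0$, so no further root exists, making $q(p)$ the unique root on all of $\mathbb{R}$. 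For $p=0$, the $q>0$ branch $-q\ln(2/\pi)-1/3$ gives $q(0)=-1/(3\ln(2/\pi))$, which is the continuous extension. Since $q(p)$ will be shown strictly increasing in (iv), it admits a well-defined inverse $p(q)$, giving (iii); the numerical values $p(1)$ and $p(34/35)$ are obtained by solving the transcendental equation numerically.

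For part (iv), set $\phi(p):=1-(2/\pi)^{p}$ so that $q(p)=p/(3\phi(p))$. A direct differentiation yields
\[
q'(p)=\frac{\phi(p)-p\phi'(p)}{3\phi(p)^{2}}.
\]
Writing $t=(2/\pi)^{p}$ and noting $p\ln(2/\pi)=\ln t$, the numerator becomes $h(t):=1-t+t\ln t$. Then $h(1)=0$, $h'(t)=\ln t$ has unique zero $t=1$, and $h''(t)=1/t>0$, so $t=1$ is a strict global minimum; therefore $h(t)>0$ for $t\neq 1$, i.e.\ for $p\neq 0$. Combined with $\phi(p)^{2}>0$, this gives $q'(p)>0$ on $\mathbb{R}\setminus\{0\}$, and continuity at $p=0$ upgrades this to strict monotonicity on $\mathbb{R}$. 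The monotonicity of the inverse $p(q)$ is then automatic.

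The only step requiring real care is the $x^{4}$-coefficient computation in (i): one must correctly carry the order-$x^{6}$ terms of both $S_p$ and $C_q$ through the quotient, and verify the cancellation $2a_{2}(3q-8/5)-\tfrac{2}{3}b_{2}(q)=\tfrac{168}{22680}(q-\tfrac{34}{35})$. The remaining ingredients (the formula for $D_{p,q}(\pi/2^{-})$, linearity in $q$, and the convexity of $h$) are essentially routine.
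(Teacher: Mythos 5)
Your proposal is correct, and for parts (i) and (ii) it is essentially the paper's argument: the paper likewise just writes down the Maclaurin expansion $D_{p,q}(x)=-\tfrac{5p-15q+8}{180}x^{2}+\tfrac{70p^{2}+315q^{2}-315pq+126p+126q-304}{45360}x^{4}+o(x^{6})$ (your coefficients $\tfrac16 x^2-\tfrac{5p-2}{360}x^4$ for $S_p$ and $\tfrac12 x^2-\tfrac{3q-2}{24}x^4$ for $C_q$ check out, and your observation that $6s_2-2c_2=0$ at $p=3q-8/5$ kills the cross terms so the $x^4$ coefficient reduces to $2s_3-\tfrac23 c_3$ is exactly what makes (\ref{Limit1s}) a clean computation), and it obtains $q(p)$ by solving the affine-in-$q$ equation. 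Where you genuinely diverge is in (iii) and (iv). The paper proves (iii) directly: by Lemma \ref{Lemma u_p}, $p\mapsto D_{p,q}(\pi/2^{-})=qU_{p}(2/\pi)-1/3$ is decreasing, and combined with the limits $+\infty$ at $p=-\infty$ and $-1/3$ at $p=+\infty$ this gives existence, uniqueness and the sign pattern in $p$ at once; (iv) then follows from the same lemma because $U_{p(q)}(2/\pi)=1/(3q)$ with both $p\mapsto U_p$ and $q\mapsto 1/(3q)$ decreasing. You instead prove (iv) first by differentiating $q(p)=p/(3\phi(p))$ and reducing the numerator to $h(t)=1-t+t\ln t>0$ via convexity, then obtain (iii) by inverting $q(p)$. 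Your calculus argument is correct and self-contained (it does not need Lemma \ref{Lemma u_p}), but note that to get $p(q)$ for \emph{every} $q>0$ from inversion you must also record that $q(p)$ maps $\mathbb{R}$ onto $(0,\infty)$ (i.e., $q(p)\to 0^{+}$ as $p\to-\infty$ and $q(p)\to\infty$ as $p\to\infty$), and you should make explicit that $D_{p,q}(\pi/2^{-})>0\iff q>q(p)\iff p<p(q)$ to recover the sign statement of (iii); both are one-line additions. The paper's route via the monotonicity of $U_p$ is slightly slicker because it delivers the sign pattern in $p$ without any surjectivity discussion, while yours has the advantage of producing an explicit positive lower bound mechanism for $q'(p)$ that is reused nowhere else.
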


\begin{proof}
(i) For $pq\neq 0$, expanding in power series yields%
\begin{equation*}
D_{p,q}\left( x\right) =-\frac{5p-15q+8}{180}x^{2}+\frac{%
70p^{2}+315q^{2}-315pq+126p+126q-304}{45360}x^{4}+o\left( x^{6}\right) ,
\end{equation*}%
which leads to (\ref{Limit1}). It is easy to check that it holds for $p=0$
or $q=0$.

If $p=3q-8/5$, then we have 
\begin{equation*}
D_{p,q}\left( x\right) =\frac{35q-34}{4725}x^{4}+o\left( x^{6}\right) ,
\end{equation*}%
which implies (\ref{Limit1s}).

(ii) If $q>0$, then for fixed $p>0$, solving the equation $D_{p,q}\left( \pi
/2^{-}\right) =0$ for $q$ we get $q=q\left( p\right) $, where $q\left(
p\right) $ is defined by (\ref{q(p)}). It is easy to check that $%
D_{p,q}\left( \pi /2^{-}\right) >0$ for $q>q\left( p\right) $ and $%
D_{p,q}\left( \pi /2^{-}\right) <0$ for $q<q\left( p\right) $.

(iii) For fixed $q>0$, by Lemma \ref{Lemma u_p}, we see that $p\mapsto
D_{p,q}\left( x\right) $ is decreasing on $\mathbb{R}$, which together with
the facts that%
\begin{equation*}
D_{-\infty ,q}\left( \frac{\pi }{2}^{-}\right) =\infty \text{ \ and \ }%
D_{\infty ,q}\left( \frac{\pi }{2}^{-}\right) =-\frac{1}{3}<0
\end{equation*}%
gives the desired assertion. Clearly, as a unique root of the equation $%
D_{p,q}\left( \pi /2^{-}\right) =0$, $p=p\left( q\right) $ is the inverse
function of $q\left( p\right) $. By mathematical computer software we can
find the approximations of $p\left( 1\right) $ and $p\left( 34/35\right) $.

(iv) From Lemma \ref{Lemma u_p} it is easy to see that $q\mapsto p\left(
q\right) $ is increasing, and so is its inverse.

This lemma is proved.
\end{proof}

Now we are ready to present sharp bounds for $\left( \sin x\right) /x$ in
terms of $M\left( \cos x;p,q\right) $ when $p$ is fixed.

\begin{theorem}
\label{MT2 generalc}Let $q\left( p\right) $ be defined by (\ref{q(p)}) and $%
\left( p,q\right) \in E_{p,q}=\{\left( p,q\right) :p\leq 0$ or $0<p\leq 3q\}$%
.

(i) If $p\geq 7/5$, then the inequality%
\begin{equation}
\frac{\sin x}{x}<\left( 1-\frac{p}{3q}+\frac{p}{3q}\cos ^{q}x\right) ^{1/p}
\label{M1uM<}
\end{equation}%
holds for $x\in (0,\pi /2)$ if and only if $q\geq p/3+8/15$.

(ii) If $p\geq p_{0}\approx 1.42034$, where $p_{0}$ is defined by (\ref%
{p_0,p_0*}), then the inequality%
\begin{equation}
\frac{\sin x}{x}>\left( 1-\frac{p}{3q}+\frac{p}{3q}\cos ^{q}x\right) ^{1/p}
\label{M1uM>}
\end{equation}%
holds for $x\in (0,\pi /2)$ if and only if $q\leq q\left( p\right) $;

(iii) if $p\leq 46/35$, then the inequality (\ref{M1uM>}) holds for $x\in
(0,\pi /2)$ if and only if $q\leq p/3+8/15$.

(iv) If $p\leq p_{0}^{\ast }\approx 1.27754$, where $p_{0}^{\ast }$ is
defined by (\ref{p_0,p_0*}), then (\ref{M1uM<}) holds for $x\in (0,\pi /2)$
if and only if and $q\geq q\left( p\right) $.
\end{theorem}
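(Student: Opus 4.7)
The plan is to translate the target inequalities into sign conditions on $D_{p,q}(x) = T_{p,q}(x) - 1/3$ via Remark \ref{Remark M,N}(i), and then read off necessity from the boundary asymptotics of $D_{p,q}$ and sufficiency from the monotonicity results of Section 2. Since $(p,q) \in E_{p,q}$, Remark \ref{Remark M,N}(i) says (\ref{M1uM<}) is equivalent to $D_{p,q} > 0$ on $(0,\pi/2)$ and (\ref{M1uM>}) to $D_{p,q} < 0$ there. For necessity, the Taylor expansion of $D_{p,q}$ at $0$ given in Lemma \ref{Lemma E D_p.q=0}(i) forces the comparison of $q$ with $p/3 + 8/15$ through the $x^{2}$-coefficient $-(p - 3q + 8/5)/36$, with the $x^{4}$-coefficient $(q - 34/35)/135$ refining the edge case $p = 3q - 8/5$; the value $D_{p,q}(\pi/2^{-})$ together with Lemma \ref{Lemma E D_p.q=0}(ii)--(iii) forces the comparison of $q$ with $q(p)$, and this is exactly where the thresholds $p_{0}$ and $p_{0}^{\ast}$ enter.

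For sufficiency, parts (i) and (iii) fall out of Proposition \ref{P main3}: under the hypotheses of (i), $(p,q)$ lies in cases (i) or (ii) of that proposition, so $T_{p,q}$ is increasing on $(0,\pi/2)$ and $T_{p,q}(x) > T_{p,q}(0^{+}) = 1/3$ yields (\ref{M1uM<}); under the hypotheses of (iii), $(p,q)$ lies in cases (iii)--(v), yielding the opposite monotonicity and hence (\ref{M1uM>}). The boundary situations $p = 7/5$ (in (i)) and $p = 46/35$ (in (iii)), where $p = 3q - 8/5$, are covered by Corollary \ref{Corollary p=3q-8/5}.

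Parts (ii) and (iv) are the harder ones, because for $p \in [p_{0}, \pi^{2}/4 - 1)$ the admissible range of $q$ meets the strip $(34/35, q(p)]$, in which $T_{p,q}$ need not be monotone on $(0,\pi/2)$. I would therefore apply Lemma \ref{Lemma Yang} with $f = S_{p}$, $g = C_{q}$, $[a,b] = [0,\pi/2]$, and $\lambda = f_{1}(0^{+}) = 1/3$. From (\ref{df1}) and (\ref{f2another}) the sign of $(S_{p}'/C_{q}')'$ agrees with that of $g_{1}(x) - p$, and Lemma \ref{Lemma g1} shows that $g_{1}$ is monotone (increasing from $3q - 8/5$ for $q \geq 1$, decreasing from $3q - 8/5$ for $q \leq 34/35$), so it crosses the level $p$ at most once. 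This forces $S_{p}'/C_{q}'$ to be decreasing-then-increasing in the setting of part (ii) and increasing-then-decreasing in that of part (iv); Lemma \ref{Lemma Yang} then upgrades the boundary inequality $D_{p,q}(\pi/2^{-}) \leq 0$ (i.e.\ $q \leq q(p)$) in part (ii), and $D_{p,q}(\pi/2^{-}) \geq 0$ (i.e.\ $q \geq q(p)$) in part (iv), into the desired pointwise inequality on all of $(0,\pi/2)$. The main obstacle will be the transitional strip $34/35 < q < 1$, which falls outside Lemma \ref{Lemma g1}'s explicit monotonicity range for $g_{1}$; I would patch it using the bound $g_{1}(x;q) \leq g_{1}(x;1)$ (still monotone in $x$) to recover the shape of $S_{p}'/C_{q}'$ needed by Lemma \ref{Lemma Yang}, together with a direct appeal to Proposition \ref{P main1}(iii) in the subrange $p \geq \pi^{2}/4 - 1$.
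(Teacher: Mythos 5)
Your treatment of parts (i) and (iii) and of all the necessity claims coincides with the paper's: the $x^{2}$-coefficient of $D_{p,q}$ at the origin and the boundary value $D_{p,q}\left( \pi /2^{-}\right) $ give the necessary conditions, and Proposition \ref{P main3} (equivalently Remark \ref{Remark MT2changed}) gives sufficiency in (i) and (iii). The machinery you invoke for (ii) and (iv) --- Lemma \ref{Lemma Yang} applied to $f=S_{p}$, $g=C_{q}$ with $\lambda =1/3$, and the shape of $S_{p}^{\prime }/C_{q}^{\prime }$ read off from the sign of $p-g_{1}$ --- is also exactly the paper's.

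The gap is in how you deploy it. You propose to apply Lemma \ref{Lemma Yang} separately for every admissible $q$, and you correctly notice that this strands you in the strip $34/35<q<1$, where Lemma \ref{Lemma g1} gives no monotonicity for $g_{1}$ and hence no control on the number of sign changes of $p-g_{1}\left( \cdot ;q\right) $. Your patch does not close this: the pointwise bound $g_{1}\left( x;q\right) \leq g_{1}\left( x;1\right) $ only settles the sign of $p-g_{1}\left( x;q\right) $ when $p\geq \pi ^{2}/4-1$ (which is just Proposition \ref{P main1}(iii) again); for $p\in \lbrack p_{0},\pi ^{2}/4-1)$ it says nothing about whether $p-g_{1}\left( \cdot ;q\right) $ changes sign exactly once, which is what Lemma \ref{Lemma Yang} requires, and that range of $p$ is nonempty since $p_{0}\approx 1.42034<\pi ^{2}/4-1\approx 1.4674$. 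The step you are missing is a reduction made \emph{before} Lemma \ref{Lemma Yang} is invoked: since $q\mapsto M\left( \cos x;p,q\right) $ is increasing (Remark \ref{Remark M,N}(i)), sufficiency in (ii) only needs to be proved at the single extremal value $q=q\left( p\right) $, and the monotonicity of $q\left( \cdot \right) $ gives $q\left( p\right) \geq q\left( p_{0}\right) =1$ for $p\geq p_{0}$, so Lemma \ref{Lemma g1}(i) applies and the troublesome strip never arises; symmetrically, in (iv) one reduces to $q=q\left( p\right) \leq q\left( p_{0}^{\ast }\right) =34/35$ and uses Lemma \ref{Lemma g1}(ii). With that reduction inserted, the rest of your argument (the signs of $h\left( 0^{+}\right) $ and $h\left( \pi /2^{-}\right) $, the single sign change, and Lemma \ref{Lemma Yang}) goes through as you describe.
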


\begin{proof}
As shown in Remark \ref{Remark T-M-N}, if $\left( p,q\right) \in E_{p,q}$,
then inequality (\ref{M1uM<}) or (\ref{M1uM>}) holds if and only if $%
D_{p,q}\left( x\right) :=S_{p}\left( x\right) /C_{q}\left( x\right) -1/3>0$
(or $<0$).

(i) When $p\geq 7/5$, we prove the inequality (\ref{M1uM<}) holds for $x\in
(0,\pi /2)$ if and only if $q\geq p/3+8/15$. The necessity is obtained from%
\begin{equation*}
\lim_{x\rightarrow 0^{+}}x^{-2}D_{p,q}\left( x\right) =\frac{1}{24}\left(
q-\left( \frac{p}{3}+\frac{8}{15}\right) \right) \geq 0,
\end{equation*}%
which gives $q\geq p/3+8/15$. The sufficiency easily follows by parts (i)
and (ii) of Remark \ref{Remark MT2changed}.

(ii) When $p\geq p_{0}\approx 1.42034$ in which $p_{0}$ satisfies that $%
p_{0}\left( 1-\left( 2/\pi \right) ^{p_{0}}\right) ^{-1}/3=1$, we show that
the inequality (\ref{M1uM>}) holds for $x\in (0,\pi /2)$ if and only if $%
p/3\leq q\leq p\left( 1-\left( 2/\pi \right) ^{p}\right) ^{-1}/3$. The
necessity can be derived from $\lim_{x\rightarrow \pi /2^{-}}D_{p,q}\left(
x\right) \leq 0$, which by Lemma \ref{Lemma E D_p.q=0} yields $q\leq p\left(
1-\left( 2/\pi \right) ^{p}\right) ^{-1}/3=q\left( p\right) $.

Now we prove the sufficiency. Due to Remark \ref{Remark M,N}, it is seen
that $q\mapsto M\left( \cos x;p,q\right) $ is increasing, and it suffices to
show that the inequality (\ref{M1uM>}) holds for $x\in (0,\pi /2)$ when $%
q=q\left( p\right) $. Also, by part (iii) of Lemma \ref{Lemma E D_p.q=0}, $%
p\mapsto q\left( p\right) =p\left( 1-\left( 2/\pi \right) ^{p}\right)
^{-1}/3 $ is increasing, so we get $q=q\left( p\right) \geq q\left(
p_{0}\right) =p_{0}\left( 1-\left( 2/\pi \right) ^{p_{0}}\right) ^{-1}/3=1$.
From Lemma \ref{Lemma g1}, when $q\geq 1$, the function $g_{1}=\left(
q\left( p\right) B-C\right) /A$ is increasing on $(0,\pi /2)$, and so $%
x\mapsto p-g_{1}\left( x\right) :=h\left( x,p,q\left( p\right) \right) $ is
decreasing on $(0,\pi /2)$. But,%
\begin{eqnarray*}
h\left( 0^{+},p,q\left( p\right) \right) &=&p-\left( 3q\left( p\right) -%
\frac{8}{5}\right) =p-\frac{p}{1-\left( 2/\pi \right) ^{p}}+\frac{8}{5}, \\
h\left( \frac{\pi }{2}^{-},p,q\left( p\right) \right) &=&\left\{ 
\begin{array}{ll}
p-\infty & \text{if }q\left( p\right) >1, \\ 
p-\left( \frac{\pi ^{2}}{4}-1\right) <0 & \text{if }q\left( p\right) =1,%
\end{array}%
\right.
\end{eqnarray*}%
where $p-\left( \pi ^{2}/4-1\right) <0$ due to that $q\left( p\right) =1$
implies $p=p_{0}\approx 1.42034$. Also, we claim that $h\left(
0^{+},p,q\left( p\right) \right) >0$ for $p\geq p_{0}$. In fact,
differentiation leads to%
\begin{equation*}
h^{\prime }\left( 0^{+},p,q\left( p\right) \right) =-\frac{\left( 2/\pi
\right) ^{p}}{\left( 1-\left( 2/\pi \right) ^{p}\right) ^{2}}\left( \ln
\left( \frac{2}{\pi }\right) ^{p}-\left( \frac{2}{\pi }\right) ^{p}+1\right)
\geq 0,
\end{equation*}%
where the last inequality holds due to $\ln x\leq x-1$ for $x>0$. Hence, 
\begin{equation*}
h\left( 0^{+},p,q\left( p\right) \right) \geq h\left( 0^{+},p_{0,},q\left(
p_{0}\right) \right) =p_{0}-\frac{p_{0}}{1-\left( 2/\pi \right) ^{p_{0}}}+%
\frac{8}{5}=p_{0}-3+\frac{8}{5}>0.
\end{equation*}%
Therefore, there is a unique number $x_{0}\in \left( 0,\pi /2\right) $ such
that $h\left( x,p,q\left( p\right) \right) >0$ for $x\in \left(
0,x_{0}\right) $ and $h\left( x,p,q\left( p\right) \right) <0$ for $x\in
\left( x_{0},\pi /2\right) $, which together with (\ref{f2another}) and (\ref%
{df1}) means that the function $x\mapsto S_{p}^{\prime }\left( x\right)
/C_{q}^{\prime }\left( x\right) $ is decreasing on $(0,x_{0}]$\ and
increasing on $\left( x_{0},\pi /2\right) $. We note that $C_{q}^{\prime
}\left( x\right) =\cos ^{q-1}x\sin x>0$ for $x\in \left( 0,\pi /2\right) $
and the relation%
\begin{equation*}
\frac{S_{p}\left( \frac{\pi }{2}^{-}\right) -S_{p}\left( 0^{+}\right) }{%
C_{q}\left( \frac{\pi }{2}^{-}\right) -C_{q}\left( 0^{+}\right) }=\frac{1}{3}
\end{equation*}%
holds, utilizing Lemma \ref{Lemma Yang} it is derived that the inequality%
\begin{equation*}
\frac{S_{p}\left( x\right) -S_{p}\left( 0^{+}\right) }{C_{q}\left( x\right)
-C_{q}\left( 0^{+}\right) }<\frac{1}{3}
\end{equation*}%
holds for all $x\in \left( 0,\pi /2\right) $, that is, $S_{p}\left( x\right)
/C_{q}\left( x\right) <1/3$ is valid for $x\in \left( 0,\pi /2\right) $,
which prove the sufficiency.

(iii) When $p\leq 46/35$, we prove the inequality of (\ref{M1uM>}) holds $%
x\in (0,\pi /2)$ if and only if $q\leq p/3+8/15$ and $\left( p,q\right) \in
E_{p,q}$. The necessity easily follows from%
\begin{equation*}
\lim_{x\rightarrow 0^{+}}x^{-2}D_{p,q}\left( x\right) =\frac{1}{24}\left(
q-\left( \frac{p}{3}+\frac{8}{15}\right) \right) \leq 0,
\end{equation*}%
which gives $q\leq p/3+8/15$. The sufficiency easily follows by part (iv)
and (v) of Remark \ref{Remark MT2changed}.

(iv) Finally, we prove that when $p\leq p_{0}^{\ast }\approx 1.27754$ in
which $p_{0}^{\ast }$ satisfies that $p_{0}^{\ast }\left( 1-\left( 2/\pi
\right) ^{p_{0}^{\ast }}\right) ^{-1}/3=34/35$, the inequality (\ref{M1uM<})
holds for $x\in \left( 0,\pi /2\right) $ if and only if $q\geq q\left(
p\right) $. The necessity can be derived from $\lim_{x\rightarrow \pi
/2^{-}}D_{p,q}\left( x\right) \geq 0$, which by Lemma \ref{Lemma E D_p.q=0}
leads us to $q\geq q\left( p\right) $.

Similarly, to prove the sufficiency, it suffices to show that the inequality
(\ref{M1uM<}) holds for $x\in (0,\pi /2)$ when $q=q\left( p\right) $. Also,
by part (iii) of Lemma \ref{Lemma E D_p.q=0}, $p\mapsto q\left( p\right) $
is increasing, so we get $q\left( p\right) \leq q\left( p_{0}^{\ast }\right)
=p_{0}^{\ast }\left( 1-\left( 2/\pi \right) ^{p_{0}^{\ast }}\right)
^{-1}/3=34/35$. By Lemma \ref{Lemma g1}, when $q\leq 34/35$, the function $%
g_{1}=\left( qB-C\right) /A$ is decreasing on $(0,\pi /2)$, and so $x\mapsto
p-g_{1}\left( x\right) :=h\left( x,p,q\left( p\right) \right) $ is
increasing on $(0,\pi /2)$. But,%
\begin{eqnarray*}
h\left( 0^{+},p,q\left( p\right) \right) &=&p-\left( 3q\left( p\right) -%
\frac{8}{5}\right) =p-\frac{p}{1-\left( 2/\pi \right) ^{p}}+\frac{8}{5}, \\
h\left( \frac{\pi }{2}^{-},p,q\left( p\right) \right) &=&p+\infty .
\end{eqnarray*}%
As shown previously, $p\mapsto h\left( 0^{+},p,q\left( p\right) \right) $ is
increasing, and so for $p\leq p_{0}^{\ast }$%
\begin{equation*}
h\left( 0^{+},p,q\left( p\right) \right) \leq h\left( 0^{+},p_{0}^{\ast
},q\left( p_{0}^{\ast }\right) \right) =p_{0}^{\ast }-\left( 3\times \frac{34%
}{35}-\frac{8}{5}\right) =p_{0}^{\ast }-\frac{46}{35}<0,
\end{equation*}%
Thus, there is a unique number $x_{1}\in \left( 0,\pi /2\right) $ such that $%
h\left( x,p,q\left( p\right) \right) <0$ for $x\in \left( 0,x_{1}\right) $
and $h\left( x,p,q\left( p\right) \right) >0$ for $x\in \left( x_{1},\pi
/2\right) $, which together with (\ref{f2another}) and (\ref{df1}) means
that the function $x\mapsto S_{p}^{\prime }\left( x\right) /C_{q}^{\prime
}\left( x\right) $ is increasing on $(0,x_{1}]$\ and decreasing on $\left(
x_{1},\pi /2\right) $. Similar to part (ii) of this proof, utilizing Lemma %
\ref{Lemma Yang} again we see that the inequality $S_{p}\left( x\right)
/C_{q}\left( x\right) >1/3$ holds true for $x\in \left( 0,\pi /2\right) $,
which prove the sufficiency.

This completes the proof of this theorem.
\end{proof}

Letting $p=7/5$ and $p=p_{0}\approx 1.42034$ in Theorem \ref{MT2 generalc}
we have

\begin{corollary}
For $x\in (0,\pi /2)$, the double inequality%
\begin{equation*}
\left( 1-\frac{p_{0}}{3q_{1}}+\frac{p_{0}}{3q_{1}}\cos ^{q_{1}}x\right)
^{1/p_{0}}<\frac{\sin x}{x}<\left( 1-\frac{7}{15q_{2}}+\frac{7}{15q_{2}}\cos
^{q_{2}}x\right) ^{5/7}
\end{equation*}%
holds if and only if $p_{0}/3\leq q_{1}\leq 1$ and $q_{2}\geq 1$, where $%
p_{0}\approx 1.42034$ satisfies that $p_{0}\left( 1-\left( 2/\pi \right)
^{p_{0}}\right) ^{-1}/3=1$. Particularly, taking $q_{1}=q_{2}=1$ we have%
\begin{equation*}
\left( \frac{3-p_{0}}{3}+\frac{p_{0}}{3}\cos x\right) ^{1/p_{0}}<\frac{\sin x%
}{x}<\left( \frac{8}{15}+\frac{7}{15}\cos x\right) ^{5/7},
\end{equation*}%
where $p_{0}\approx 1.42034$ and $7/5=1.4$ are the best.
\end{corollary}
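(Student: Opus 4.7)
The plan is to derive both inequalities by direct specialization of Theorem \ref{MT2 generalc}, choosing $p = 7/5$ in part (i) for the upper bound and $p = p_0$ in part (ii) for the lower bound. The constants have been arranged so that each specialization lands exactly on the boundary of the admissible region of that theorem, which is what produces the claimed sharpness.

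For the upper bound I would set $p = 7/5$ in part (i) of Theorem \ref{MT2 generalc}. Since $7/5 \geq 7/5$, the theorem applies and the inequality $\sin x/x < (1 - p/(3q) + (p/(3q))\cos^q x)^{1/p}$ holds on $(0,\pi/2)$ if and only if $q \geq p/3 + 8/15$, which at $p = 7/5$ collapses to $q \geq 7/15 + 8/15 = 1$. This gives the upper half of the double inequality precisely when $q_2 \geq 1$. For the lower bound, the key observation is that $p_0$ is defined by $p_0(1 - (2/\pi)^{p_0})^{-1}/3 = 1$, which in the notation of Lemma \ref{Lemma E D_p.q=0} reads $q(p_0) = 1$. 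Applying part (ii) of Theorem \ref{MT2 generalc} with $p = p_0$ then yields $\sin x/x > (1 - p_0/(3q) + (p_0/(3q))\cos^q x)^{1/p_0}$ if and only if $q \leq q(p_0) = 1$, augmented by the constraint $(p_0, q) \in E_{p,q}$ which forces $q \geq p_0/3$. Together these give the lower bound if and only if $p_0/3 \leq q_1 \leq 1$, completing the main equivalence.

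For the particular case $q_1 = q_2 = 1$, direct substitution produces the stated closed-form double inequality (with $7/15 = (7/5)/3$ and $5/7 = 1/(7/5)$). The assertion that $p_0$ and $7/5$ are the best constants then follows because $M(\cos x; p, 1)$ is decreasing in $p$ by Remark \ref{Remark M,N}: among admissible exponents, the smallest $p$ maximizes the lower bound and the largest $p$ minimizes the upper bound, matching exactly the boundary values $p_0$ and $7/5$ identified above. I do not anticipate a real obstacle; the only subtlety worth spelling out is that the defining equation for $p_0$ coincides with $q(p_0) = 1$ and that the expression $p/3 + 8/15$ evaluated at $p = 7/5$ lands exactly on $1$, so that both specializations sit on the boundary of the iff conditions supplied by Theorem \ref{MT2 generalc}.
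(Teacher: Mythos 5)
Your proposal is correct and follows exactly the paper's route: the paper obtains this corollary precisely by setting $p=7/5$ in part (i) and $p=p_{0}$ in part (ii) of Theorem \ref{MT2 generalc}, using $q(p_{0})=1$ and $7/5\cdot\frac{1}{3}+\frac{8}{15}=1$ as you note, with the extremality of $p_{0}$ and $7/5$ coming from the monotonicity of $M(\cos x;p,1)$ in $p$ from Remark \ref{Remark M,N}. No gaps.
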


Letting $p=46/35$ and $p=p_{0}^{\ast }\approx 1.27754$ in Theorem \ref{MT2
generalc} we get

\begin{corollary}
For $x\in (0,\pi /2)$, the double inequality%
\begin{equation*}
\left( 1-\frac{46}{105q_{1}}+\frac{46}{105q_{1}}\cos ^{q_{1}}x\right)
^{35/46}<\frac{\sin x}{x}<\left( 1-\frac{p_{0}^{\ast }}{3q_{2}}+\frac{%
p_{0}^{\ast }}{3q_{2}}\cos ^{q_{2}}x\right) ^{1/p_{0}^{\ast }}
\end{equation*}%
holds if and only if $46/105\leq q_{1}\leq 34/35$ and $q_{2}\geq 34/35$,
where $p_{0}^{\ast }\approx 1.27754$ satisfies that $p_{0}^{\ast }\left(
1-\left( 2/\pi \right) ^{p_{0}^{\ast }}\right) ^{-1}/3=34/35$. Particularly,
putting $q_{1}=q_{2}=34/35$ we have%
\begin{equation*}
\left( \frac{28}{51}+\frac{23}{51}\cos ^{34/35}x\right) ^{35/46}<\frac{\sin x%
}{x}<\left( \frac{102-35p_{0}^{\ast }}{102}+\frac{35p_{0}^{\ast }}{102}\cos
^{34/35}x\right) ^{1/p_{0}^{\ast }},
\end{equation*}%
where $46/35\approx 1.3143$ and $p_{0}^{\ast }\approx 1.27754$ are the best.
\end{corollary}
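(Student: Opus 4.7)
The corollary is a direct consequence of Theorem \ref{MT2 generalc}, obtained by specializing the parameter $p$: take $p=46/35$ in part (iii) to produce the lower bound and $p=p_0^{\ast}$ in part (iv) to produce the upper bound. Once the specializations are made, the work reduces to checking that the arithmetic simplifies to the displayed form and that the admissible ranges of $q_1$ and $q_2$ match those in the theorem.

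For the lower bound I would set $p=46/35$ in part (iii). A short computation yields $1/p = 35/46$, $p/(3q_1) = 46/(105q_1)$, and $p/3 + 8/15 = 46/105 + 56/105 = 34/35$, so the ``$>$'' form specializes to the stated left inequality and holds iff $q_1 \leq 34/35$. The requirement $(p,q_1) \in E_{p,q}$, needed so that $p/(3q_1)$ is a convex-combination weight, forces $q_1 \geq p/3 = 46/105$. Together these give the asserted range $46/105 \leq q_1 \leq 34/35$.

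For the upper bound I would set $p=p_0^{\ast}$ in part (iv). By Lemma \ref{Lemma E D_p.q=0}, $p_0^{\ast}$ is defined precisely so that $q(p_0^{\ast})=34/35$, so the ``$<$'' form holds iff $q_2 \geq q(p_0^{\ast}) = 34/35$; the constraint $q_2 \geq p_0^{\ast}/3 \approx 0.426$ is automatically absorbed.

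The particular case $q_1=q_2=34/35$ is then pure arithmetic: $46/(105\cdot 34/35) = 46/102 = 23/51$ and $1-23/51 = 28/51$ on the lower side, while $p_0^{\ast}/(3\cdot 34/35) = 35p_0^{\ast}/102$ on the upper side. For the best-constant claim, with $q = 34/35$ fixed the Taylor coefficient $\lim_{x\to 0^+} x^{-2}D_{p,34/35}(x) = -(p-46/35)/36$ from Lemma \ref{Lemma E D_p.q=0} must be nonpositive for the lower bound, forcing $p \geq 46/35$; combined with part (iii), this pins $p = 46/35$ as optimal, and Remark \ref{Remark M,N} confirms that (among admissible $p$) this value gives the tightest lower bound via the monotonicity of $M$ in $p$. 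Symmetrically, the endpoint obstruction $D_{p,34/35}(\pi/2^-) \geq 0$ together with the definition of $p_0^{\ast}$ and the monotonicity of $p \mapsto q(p)$ (Lemma \ref{Lemma E D_p.q=0}(iv)) forces $p \leq p_0^{\ast}$ for the upper bound, and since $M$ is decreasing in $p$ the largest admissible $p$ yields the tightest upper bound. The only mild obstacle is that part (iii) of Theorem \ref{MT2 generalc} is phrased for $p \leq 46/35$, so sharpness of $46/35$ from above at $q=34/35$ is secured by the small-$x$ Taylor-coefficient obstruction rather than by a further appeal to the theorem.
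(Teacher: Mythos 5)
Your proposal is correct and follows essentially the same route as the paper, which obtains this corollary simply by setting $p=46/35$ in part (iii) and $p=p_{0}^{\ast}$ in part (iv) of Theorem \ref{MT2 generalc} (so that $p/3+8/15=34/35$ and $q(p_{0}^{\ast})=34/35$), with the constraint $q_{1}\geq 46/105$ coming from membership in $E_{p,q}$. Your added sharpness argument for the constants $46/35$ and $p_{0}^{\ast}$ — via the small-$x$ coefficient $-\frac{1}{36}(p-46/35)$, the endpoint condition $D_{p,34/35}(\pi/2^{-})\geq 0$, and the monotonicity of $M$ in $p$ from Remark \ref{Remark M,N} — is a correct elaboration of what the paper leaves implicit.
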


Letting $p=0,1$ in Theorem \ref{MT2 generalc} we get

\begin{corollary}
(i) The double inequality%
\begin{equation*}
\exp \left( \frac{\cos ^{q_{1}}x-1}{3q_{1}}\right) <\frac{\sin x}{x}<\exp
\left( \frac{\cos ^{q_{2}}x-1}{3q_{2}}\right)
\end{equation*}%
holds for $x\in (0,\pi /2)$ if and only if $q_{1}\leq 8/15$ and $q_{2}\geq
\left( 3\ln \left( \pi /2\right) \right) ^{-1}\approx 0.73814$.

(ii) The double inequality%
\begin{equation}
1-\frac{1}{3q_{1}}+\frac{1}{3q_{1}}\cos ^{q_{1}}x<\frac{\sin x}{x}<1-\frac{1%
}{3q_{2}}+\frac{1}{3q_{2}}\cos ^{q_{2}}x  \label{M1uM>p=1}
\end{equation}%
holds for $x\in (0,\pi /2)$ if and only if $1/3\leq q_{1}\leq 13/15$ and $%
q_{2}\geq \pi /\left( 3\pi -6\right) \approx 0.91731$.
\end{corollary}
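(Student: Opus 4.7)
The plan is to derive both parts of this corollary as direct specializations of Theorem \ref{MT2 generalc}, since the exponentials in (i) and the arithmetic expressions in (ii) are exactly the $p=0$ and $p=1$ instances of the function $M(\cos x;p,q)=(1-\tfrac{p}{3q}+\tfrac{p}{3q}\cos^q x)^{1/p}$ defined in Remark \ref{Remark M,N}, interpreted as a limit when $p=0$.

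For part (i), I would take $p=0$. Since $0 \le 46/35$, part (iii) of Theorem \ref{MT2 generalc} applies and yields the left inequality if and only if $q_1 \le 0/3 + 8/15 = 8/15$; the membership constraint $(0,q_1)\in E_{p,q}$ reduces to $p\le 0$, so no extra restriction on $q_1$ appears. Since $0 \le p_0^{\ast}\approx 1.27754$, part (iv) of Theorem \ref{MT2 generalc} applies and yields the right inequality if and only if $q_2 \ge q(0)$, where by (\ref{q(p)}) one has $q(0) = -1/(3\ln(2/\pi)) = 1/(3\ln(\pi/2)) \approx 0.73814$. The explicit exponential form is obtained by taking the $p\to 0$ limit of $M(\cos x;p,q)$, giving $\exp((\cos^q x -1)/(3q))$, exactly as written in the corollary.

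For part (ii), I would take $p=1$. Since $1 \le 46/35$, part (iii) of Theorem \ref{MT2 generalc} applies and yields the left inequality if and only if $q_1 \le 1/3 + 8/15 = 13/15$ together with $(1,q_1)\in E_{p,q}$. Because $p=1>0$, this last condition becomes $1 \le 3q_1$, i.e.\ $q_1 \ge 1/3$, which accounts for the sharp lower endpoint $1/3$ in the corollary's range $1/3 \le q_1 \le 13/15$. Since $1 \le p_0^{\ast}$, part (iv) of Theorem \ref{MT2 generalc} applies and gives the right inequality if and only if $q_2 \ge q(1)$, where by (\ref{q(p)}) one has $q(1) = 1/(3(1-2/\pi)) = \pi/(3\pi-6) \approx 0.91731$.

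There is really no difficult step; the corollary is essentially a bookkeeping exercise. The only subtle point to spell out is that the apparent upper bounds $q_2\ge 1$ given by Remark \ref{Remark MT2changed} in the cases $p=0$ and $p=1$ are non-sharp, and one must invoke parts (iii)--(iv) of Theorem \ref{MT2 generalc} (rather than that remark) to get the sharper thresholds $q(0)$ and $q(1)$; this is where Lemma \ref{Lemma Yang} enters through the proof of Theorem \ref{MT2 generalc}. Everything else reduces to evaluating $p/3+8/15$ and the closed form of $q(p)$ from (\ref{q(p)}) at $p=0$ and $p=1$.
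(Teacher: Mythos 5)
Your proposal is correct and is exactly the paper's route: the author derives this corollary by the single line ``Letting $p=0,1$ in Theorem \ref{MT2 generalc} we get,'' and your specializations of parts (iii) and (iv) of that theorem, including the evaluation of $q(0)=1/(3\ln(\pi/2))$ and $q(1)=\pi/(3\pi-6)$ and the extraction of the lower endpoint $q_1\geq 1/3$ from the constraint $(1,q_1)\in E_{p,q}$, fill in precisely the bookkeeping the paper leaves implicit.
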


\begin{remark}
Letting $q_{1}=13/15,2/3,1/2,1/3$ and $q_{2}=1$ and using the increasing
property of $M\left( \cos x;p,q\right) $ in $q$, we get the following chain
of inequalities from (\ref{M1uM>p=1}):%
\begin{eqnarray*}
\cos ^{1/3}x &<&\tfrac{1}{3}+\tfrac{2}{3}\cos ^{1/2}x<\tfrac{1}{2}+\tfrac{1}{%
2}\cos ^{2/3}x \\
&<&\tfrac{8}{13}+\tfrac{5}{13}\cos ^{13/15}x<\frac{\sin x}{x}<\tfrac{2}{3}+%
\tfrac{1}{3}\cos x.
\end{eqnarray*}
\end{remark}

\begin{theorem}
\label{MT3 general}Let $p\left( q\right) $ be the unique root of equation $%
D_{p,q}\left( \pi /2^{-}\right) =0$ for fixed $q>0$ and $\left(
p_{i},q\right) \in E_{p_{i},q}=\{\left( p_{i},q\right) :p_{i}\leq 0$ or $%
0<p_{i}\leq 3q\}$, $i=1,2$.

(i) If $q\geq 1$, then the double inequality%
\begin{equation}
\left( 1-\frac{p_{1}}{3q}+\frac{p_{1}}{3q}\cos ^{q}x\right) ^{1/p_{1}}<\frac{%
\sin x}{x}<\left( 1-\frac{p_{2}}{3q}+\frac{p_{2}}{3q}\cos ^{q}x\right)
^{1/p_{2}}  \label{M3}
\end{equation}%
holds for $x\in (0,\pi /2)$ if and only if $p_{1}\geq p\left( q\right) $ and 
$p_{2}\leq 3q-8/5$.

(ii) If $0<q\leq 34/35$, then the double inequality (\ref{M3}) holds if and
only if $p_{1}\geq 3q-8/5$ and $p_{2}\leq p\left( q\right) $.

(iii) If $q\leq 0$, then the first inequality in (\ref{M3}) holds if and
only if $p_{1}\geq 3q-8/5$.
\end{theorem}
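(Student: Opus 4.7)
My plan is to reduce (\ref{M3}) to a sign condition on $D_{p,q}(x):=T_{p,q}(x)-\tfrac{1}{3}$ exactly as in Remark~\ref{Remark T-M-N}; when $(p,q)\notin E_{p,q}$ (as may occur when $q\le 0$) the equivalence still holds after clearing the $p$-th power algebraically. The layout of the proof mirrors Theorem~\ref{MT2 generalc} with the roles of $p$ and $q$ interchanged, since now $q$ is the fixed parameter. For necessity I simply let $x\to 0^+$ and $x\to\pi/2^-$: Lemma~\ref{Lemma E D_p.q=0}(i) gives $\lim_{x\to 0^+}D_{p,q}(x)/x^{2}=-(p-3q+8/5)/36$, which forces $p_{2}\le 3q-8/5$ in (i) and $p_{1}\ge 3q-8/5$ in (ii)--(iii); Lemma~\ref{Lemma E D_p.q=0}(iii) forces $D_{p,q}(\pi/2^-)\lessgtr 0$, and by the monotonicity of $p\mapsto q(p)$ this translates into $p_{1}\ge p(q)$ in (i) and $p_{2}\le p(q)$ in (ii).

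For sufficiency, the bounds involving $3q-8/5$ are immediate from Proposition~\ref{P main1}: $T_{p,q}$ is monotone there and $T_{p,q}(0^+)=\tfrac{1}{3}$, so $T_{p,q}(x)$ stays on the correct side of $\tfrac13$; this alone settles (iii). The substantive case is the $p(q)$ bound in (i) and (ii). By Remark~\ref{Remark M,N}(i), $M(\cos x;p,q)$ is decreasing in $p$, so it suffices to treat $p=p(q)$. I will apply Lemma~\ref{Lemma Yang} with $f=S_{p(q)}$, $g=C_{q}$ on $[0,\pi/2]$: direct expansion gives $\lambda:=f_{1}(0^+)=\tfrac{1}{3}$, and the defining identity for $p(q)$ makes the endpoint ratio $T_{p(q),q}(\pi/2^-)=\tfrac{1}{3}$, so the hypothesis of Lemma~\ref{Lemma Yang} holds with equality and its strict conclusion delivers $S_{p(q)}(x)/C_{q}(x)\lessgtr\tfrac{1}{3}$ on $(0,\pi/2)$, as required.

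The one hypothesis of Lemma~\ref{Lemma Yang} that needs real work is the monotonic structure of $f_{1}=S_{p}'/C_{q}'$. By (\ref{df1})--(\ref{f2another}) this amounts to a single sign change of $h(x):=p(q)-g_{1}(x)$ on $(0,\pi/2)$. In case (i), Lemma~\ref{Lemma g1}(i) makes $g_{1}$ increase from $3q-8/5$ to $+\infty$ (or to $\pi^{2}/4-1$ when $q=1$, still larger than $p(1)=p_{0}$), so what I need is $h(0^+)=p(q)-(3q-8/5)>0$; in case (ii), Lemma~\ref{Lemma g1}(ii) makes $g_{1}$ decrease from $3q-8/5$ to $-\infty$, and I need $h(0^+)<0$. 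To settle both I define $\psi(p):=p-3q(p)+8/5$ and show $\psi$ is strictly increasing on $\mathbb{R}$. A direct computation yields $\psi'(p)=u\bigl[p\ln(\pi/2)-(1-u)\bigr]/(1-u)^{2}$ with $u=(2/\pi)^{p}$, and the elementary inequality $\ln v+1/v\ge 1$ for $v\ge 1$ (applied to $v=(\pi/2)^{|p|}$) makes the bracket positive for $p\ne 0$. Then $\psi(p_{0})=p_{0}-7/5>0$ covers (i), because $q\ge 1$ forces $p(q)\ge p_{0}$, and $\psi(p_{0}^{\ast})=p_{0}^{\ast}-46/35<0$ covers (ii). I expect the sign verification of $h(0^+)$ via the monotonicity of $\psi$ to be the only genuinely nontrivial step; everything else is bookkeeping from the earlier sections.
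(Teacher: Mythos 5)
Your proposal is correct and follows essentially the same route as the paper: necessity from the endpoint limits of $D_{p,q}$ via Lemma \ref{Lemma E D_p.q=0}, sufficiency of the $3q-8/5$ bounds from Proposition \ref{P main1}, and sufficiency of the $p(q)$ bounds by reducing to $p=p(q)$ and applying Lemma \ref{Lemma Yang} after locating the single sign change of $p(q)-g_{1}(x)$. The only difference is presentational: the paper disposes of the $p(q)$ case by citing part (ii) of Theorem \ref{MT2 generalc} (whose proof contains exactly your $\psi$-monotonicity computation, there phrased as $p\mapsto h(0^{+},p,q(p))$ being increasing), whereas you inline that argument.
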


\begin{proof}
In the case of $q\geq 1$. For $\left( p_{2},q\right) \in E_{p_{2},q}$, the
second inequality in (\ref{M3}) is equivalent to $D_{p_{2},q}\left( x\right)
:=S_{p_{2}}\left( x\right) /C_{q}\left( x\right) -1/3>0$ for $x\in (0,\pi
/2) $. If it holds for all $x\in (0,\pi /2)$, then we have%
\begin{equation*}
\lim_{x\rightarrow 0^{+}}x^{-2}D_{p_{2},q}\left( x\right) =\frac{1}{24}%
\left( q-\left( \frac{p_{2}}{3}+\frac{8}{15}\right) \right) \geq 0,
\end{equation*}%
which yields $p_{2}\leq 3q-8/5$. Conversely, if $p_{2}\leq 3q-8/5$, then by
part (i) of Proposition \ref{P main1} we get that $T_{p_{2},q}=S_{p_{2}}%
\left( x\right) /C_{q}\left( x\right) $ is increasing on $(0,\pi /2)$, and
so $S_{p_{2}}\left( x\right) /C_{q}\left( x\right) >S_{p_{2}}\left(
0^{+}\right) /C_{q}\left( 0^{+}\right) =1/3$, which implies the second
inequality in (\ref{M3}).

For $\left( p_{1},q\right) \in E_{p_{1},q}$, if the first inequality in (\ref%
{M3}) holds for all $x\in (0,\pi /2)$, that is, $D_{p_{1},q}\left( x\right)
:=S_{p_{1}}\left( x\right) /C_{q}\left( x\right) -1/3<0$, then there must be 
$D_{p_{1},q}\left( \pi /2^{-}\right) \leq 0$, and from Lemma \ref{Lemma E
D_p.q=0} we get $p_{1}\geq p\left( q\right) $, where $p\left( q\right) $ is
the inverse function of $q\left( p\right) $ defined by (\ref{q(p)}).

Now we prove the condition $p_{1}\geq p\left( q\right) $ is sufficient for
the first inequality in (\ref{M3}) to hold. Lemma \ref{Lemma E D_p.q=0} tell
us that $q\mapsto p\left( q\right) $ is increasing, which together with $%
q\geq 1$ gives $p_{1}\geq p\left( q\right) \geq p\left( 1\right) =p_{0}$.
And, $p_{1}>p\left( q\right) $ means $q<q\left( p_{1}\right) $. From part
(ii) of Theorem \ref{MT2 generalc}, the first inequality in (\ref{M3}) holds
for $x\in \left( 0,\pi /2\right) $.

In the cases of $0<q\leq 34/35$ and $q\leq 0$, it can be proved in the same
method, here we omit details of proof.

This completes the proof.
\end{proof}

Letting $q=1,34/35$ in Theorem \ref{MT3 general}, we get

\begin{corollary}
(i) The double inequality%
\begin{equation}
\left( 1-\frac{p_{1}}{3}+\frac{p_{1}}{3}\cos x\right) ^{1/p_{1}}<\frac{\sin x%
}{x}<\left( 1-\frac{p_{2}}{3}+\frac{p_{2}}{3}\cos x\right) ^{1/p_{2}}
\label{M3q=1}
\end{equation}%
holds for $x\in (0,\pi /2)$ if and only if $1.42034\approx p_{0}\leq
p_{1}\leq 3$ and $p_{2}\leq 7/5$.

(ii) The double inequality%
\begin{equation*}
\left( 1-\frac{35p_{1}}{102}+\frac{35p_{1}}{102}\cos ^{34/35}x\right)
^{1/p_{1}}<\frac{\sin x}{x}<\left( 1-\frac{35p_{2}}{102}+\frac{35p_{2}}{102}%
\cos ^{34/35}x\right) ^{1/p_{2}}
\end{equation*}%
holds for $x\in (0,\pi /2)$ if and only if $46/35\leq p_{1}\leq 102/35$ and $%
p_{2}\leq p\left( 34/35\right) =p_{0}^{\ast }\approx 1.27754$.
\end{corollary}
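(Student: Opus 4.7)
The plan is to derive both parts of the corollary as direct specializations of Theorem \ref{MT3 general} to $q=1$ and $q=34/35$, respectively. Since Theorem \ref{MT3 general} already provides necessary and sufficient conditions on $(p_1,p_2)$ for the inequality
\begin{equation*}
\left( 1-\tfrac{p_{1}}{3q}+\tfrac{p_{1}}{3q}\cos ^{q}x\right) ^{1/p_{1}}<\tfrac{\sin x}{x}<\left( 1-\tfrac{p_{2}}{3q}+\tfrac{p_{2}}{3q}\cos ^{q}x\right) ^{1/p_{2}}
\end{equation*}
to hold on $(0,\pi/2)$, the whole task amounts to plugging in the two numerical values of $q$, simplifying the coefficients $p_i/(3q)$, and identifying the resulting bounds on $p_1,p_2$.

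For part (i), I would set $q=1$ in Theorem \ref{MT3 general}(i). The hypothesis $q\geq 1$ is met with equality, the factor $p_i/(3q)$ reduces to $p_i/3$, yielding exactly the displayed inequality \eqref{M3q=1}. The necessary and sufficient conditions $p_{1}\geq p(q)$ and $p_{2}\leq 3q-8/5$ become $p_1\geq p(1)=p_0\approx 1.42034$ (by \eqref{p_0,p_0*}) and $p_2\leq 7/5$. The upper cap $p_1\leq 3$ arises from the admissibility constraint $(p_1,q)\in E_{p_1,q}$, which for $q=1$ requires either $p_1\leq 0$ or $0<p_1\leq 3$; since $p_1\geq p_0>0$, the only surviving constraint is $p_1\leq 3$. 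Thus $p_0\leq p_1\leq 3$. For $p_2$, the constraint $p_2\leq 7/5<3$ already sits inside $E_{p_2,1}$, so no extra bound is needed.

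For part (ii), I would set $q=34/35$ in Theorem \ref{MT3 general}(ii). The hypothesis $0<q\leq 34/35$ is met with equality. The coefficient $p_i/(3q)=p_i/(3\cdot 34/35)=35p_i/102$ delivers the displayed form. The conditions $p_{1}\geq 3q-8/5$ and $p_{2}\leq p(q)$ specialize to $p_1\geq 3\cdot\tfrac{34}{35}-\tfrac{8}{5}=\tfrac{102-56}{35}=\tfrac{46}{35}$ and $p_2\leq p(34/35)=p_0^{\ast}\approx 1.27754$ by \eqref{p_0,p_0*}. The admissibility $(p_1,q)\in E_{p_1,q}$ for $q=34/35$ requires $0<p_1\leq 3q=102/35$, which combines with $p_1\geq 46/35$ to give the stated range $46/35\leq p_1\leq 102/35$; the constraint on $p_2$ is automatically compatible with $E_{p_2,q}$ since $p_0^{\ast}<102/35$.

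There is no real obstacle: every ingredient (the threshold constants $p_0,p_0^{\ast}$, the monotonicity of $p\mapsto q(p)$, and the admissible domain $E_{p,q}$) is supplied by Lemma \ref{Lemma E D_p.q=0} and Theorem \ref{MT3 general}. The only place requiring a moment's care is the bookkeeping of the admissibility cap $p_i\leq 3q$, which truncates the range of $p_1$ from above in both parts; this is what produces the upper endpoints $3$ and $102/35$ that are not visible in the statement of Theorem \ref{MT3 general} itself.
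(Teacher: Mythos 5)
Your proposal is correct and is exactly the paper's route: the author derives this corollary by simply setting $q=1$ and $q=34/35$ in Theorem \ref{MT3 general}, with the thresholds $p(1)=p_{0}$, $p(34/35)=p_{0}^{\ast}$, $3q-8/5=7/5$ resp. $46/35$, and the upper caps $3$ and $102/35$ coming from the admissibility set $E_{p,q}$, just as you describe. Your explicit bookkeeping of where the caps $p_{1}\leq 3q$ come from is a useful clarification of what the paper leaves implicit, but it is the same argument.
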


\begin{remark}
Letting $p_{1}=3/2,2,3$ and $p_{2}=7/5,6/5,1$ in (\ref{M3q=1}) and using the
decreasing property of $M\left( t;p,q\right) $ in $p$, we get%
\begin{eqnarray*}
\left( \cos x\right) ^{1/3} &<&\left( \frac{1}{3}+\frac{2}{3}\cos x\right)
^{1/2}<\left( \frac{1}{2}+\frac{1}{2}\cos x\right) ^{2/3}<\frac{\sin x}{x} \\
&<&\left( \frac{8}{15}+\frac{7}{15}\cos x\right) ^{5/7}<\left( \frac{3}{5}+%
\frac{2}{5}\cos x\right) ^{5/6}<\frac{2}{3}+\frac{1}{3}\cos x.
\end{eqnarray*}
\end{remark}

Letting $q=0$ in Theorem \ref{MT3 general} we get

\begin{corollary}
For $x\in (0,\pi /2)$, the inequality%
\begin{equation*}
\frac{\sin x}{x}>\left( 1+\frac{p}{3}\ln \left( \cos x\right) \right) ^{1/p}
\end{equation*}%
holds if and only if $p\in \lbrack -8/5,0)$.
\end{corollary}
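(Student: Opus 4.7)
The plan is to deduce this corollary directly from part (iii) of Theorem \ref{MT3 general} by specializing $q=0$. The only non-trivial point is to verify that the right-hand side $(1+(p/3)\ln\cos x)^{1/p}$ of the stated inequality coincides with the $q\to 0$ limit of the quantity appearing in the first inequality of (\ref{M3}).

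First, for $p\neq 0$ and $t\in(0,1)$ I would compute
\[
\lim_{q\to 0}\Bigl(1-\tfrac{p}{3q}+\tfrac{p}{3q}\,t^{q}\Bigr)
=1+\tfrac{p}{3}\lim_{q\to 0}\frac{t^{q}-1}{q}
=1+\tfrac{p}{3}\ln t,
\]
which is also the value one reads off from the piecewise formula (\ref{M}) in the branch $p<0$, $q=0$. Taking $q=0$, $p_{1}=p$ and $t=\cos x$, the first inequality in (\ref{M3}) becomes exactly the inequality in the corollary.

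Next I would apply Theorem \ref{MT3 general}(iii) with $q=0$: the first inequality in (\ref{M3}) holds on $(0,\pi/2)$ if and only if $p_{1}\geq 3q-8/5=-8/5$. For the stated form $(1+(p/3)\ln\cos x)^{1/p}$ to make sense we additionally require $(p,0)\in E_{p,0}$, which forces $p\leq 0$, together with $p\neq 0$ so that the exponent $1/p$ is well defined. Intersecting the three conditions $p\geq -8/5$, $p\leq 0$, and $p\neq 0$ gives $p\in[-8/5,0)$, as claimed. Since Theorem \ref{MT3 general} carries the whole analytic burden, no real obstacle remains; the only delicate points are confirming the limit computation above and observing that the boundary value $p=-8/5$ is admissible because Theorem \ref{MT3 general}(iii) is stated with the non-strict inequality $p_{1}\geq 3q-8/5$.
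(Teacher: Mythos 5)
Your proposal is correct and follows exactly the paper's route: the paper obtains this corollary simply by "letting $q=0$ in Theorem \ref{MT3 general}," and your specialization of part (iii), together with the limit computation identifying $\lim_{q\to 0}\bigl(1-\tfrac{p}{3q}+\tfrac{p}{3q}t^{q}\bigr)=1+\tfrac{p}{3}\ln t$ and the intersection of the conditions $p\geq -8/5$, $(p,0)\in E_{p,0}$ (i.e.\ $p\leq 0$) and $p\neq 0$, is precisely the bookkeeping the paper leaves implicit. No gaps.
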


\subsection{In the case of $p=kq$}

Letting $p/q=k$. Then $E_{p,q}=\{\left( p,q\right) :p\leq 0$ or $0<p\leq
3q\} $ is changed into 
\begin{equation*}
E_{kq,q}=\{\left( k,q\right) :q\leq 0,k\geq 0\text{ or }q\geq 0,k\leq 3\},
\end{equation*}%
while $M\left( t;p,q\right) $ can be expressed as 
\begin{equation*}
M\left( t;kq,q\right) =\left\{ 
\begin{array}{ll}
\left( 1-\frac{k}{3}+\frac{k}{3}t^{q}\right) ^{1/\left( kq\right) } & \text{%
if }kq\neq 0,\left( k,q\right) \in E_{kq,q}, \\ 
\exp \frac{t^{q}-1}{3q} & \text{if }k=0,q\neq 0, \\ 
t^{1/3} & \text{if }q=0.%
\end{array}%
\right.
\end{equation*}

\begin{remark}
Similar to the monotonicity of $M\left( t;p,q\right) $, we claim that $%
M\left( t;kq,q\right) $ is decreasing (increasing) in $q$ if $k>\left(
<\right) 3$, and is decreasing (increasing) in $k$ if $q>\left( <\right) 0$.

In fact, logarithmic differentiations gives%
\begin{eqnarray*}
\frac{\partial \ln M}{\partial q} &=&\frac{1}{q^{2}}\left( \frac{qt^{q}\ln t%
}{3-k+kt^{q}}-\frac{1}{k}\ln \left( 1-\frac{k}{3}+\frac{k}{3}t^{q}\right)
\right) :=\frac{M_{2}\left( t;k,q\right) }{q^{2}}, \\
\frac{\partial M_{2}}{\partial q} &=&\frac{t^{q}\ln ^{2}t}{\left(
3-k+kt^{q}\right) ^{2}}q\left( 3-k\right) ,
\end{eqnarray*}%
which means that $M_{2}$ is decreasing (increasing) in $q$ on $(0,\infty )$
and increasing (decreasing) on $\left( -\infty ,0\right) $ if $k>\left(
<\right) 3$. Hence we have $M_{2}\left( t;k,q\right) <\left( >\right)
M_{2}\left( t;k,0\right) =0$ if $k>\left( <\right) 3$, and then, $M$ is
decreasing (increasing) in $q$ for $k>\left( <\right) 3$.

Analogously, the monotonicity of $M\left( t;kq,q\right) $ with respect to $k$
easily follows from the following relations:%
\begin{eqnarray*}
\frac{\partial \ln M}{\partial k} &=&\frac{1}{k^{2}}\left( \frac{k}{q}\frac{%
t^{q}-1}{3-k+kt^{q}}-\frac{1}{q}\ln \left( 1-\frac{k}{3}+\frac{k}{3}%
t^{q}\right) \right) :=\frac{M_{3}\left( t;k,q\right) }{k^{2}}, \\
\frac{\partial M_{3}}{\partial k} &=&-\frac{k}{q}\frac{\left( t^{q}-1\right)
^{2}}{\left( 3-k+kt^{q}\right) ^{2}}.
\end{eqnarray*}
\end{remark}

As a direct consequence of Corollary \ref{Corollary p=kq}, we have

\begin{theorem}
\label{MT4 p=kq}Let $\left( k,q\right) \in E_{kq,q}=\{\left( k,q\right)
:q\leq 0,k\geq 0$ or $q\geq 0,k\leq 3\}$. Then

(i) when $k\in \left( 3,\infty \right) $, the inequality%
\begin{equation*}
\frac{\sin x}{x}>\left( 1-\frac{k}{3}+\frac{k}{3}\cos ^{q}x\right)
^{1/\left( kq\right) }
\end{equation*}%
holds for $x\in (0,\pi /2)$ if $8/\left( 5\left( 3-k\right) \right) \leq
q\leq 0$;

(ii) when $k\in \lbrack \left( 35\pi ^{2}-140\right) /136,3)$, the double
inequality%
\begin{equation}
\left( 1-\frac{k}{3}+\frac{k}{3}\cos ^{q_{1}}x\right) ^{1/\left(
kq_{1}\right) }<\frac{\sin x}{x}<\left( 1-\frac{k}{3}+\frac{k}{3}\cos
^{q_{2}}x\right) ^{1/\left( kq_{2}\right) }  \label{M4double}
\end{equation}%
holds for $x\in (0,\pi /2)$ if $q_{2}\geq 8/\left( 5\left( 3-k\right)
\right) $ and $q_{1}\leq 1$;

(iii) when $k\in \lbrack \pi ^{2}/4-1,\left( 35\pi ^{2}-140\right) /136)$,
the double inequality (\ref{M4double}) holds for $x\in (0,\pi /2)$ if $%
q_{2}\geq 8/\left( 5\left( 3-k\right) \right) $ and $q_{1}\leq 34/35$ or $%
\left( \pi ^{2}/4-1\right) /k\leq q_{1}\leq 1$;

(iv) when $k\in \lbrack 7/5,\pi ^{2}/4-1)$, the double inequality (\ref%
{M4double}) holds for $x\in (0,\pi /2)$ if $q_{2}\geq 8/\left( 5\left(
3-k\right) \right) $ and $q_{1}\leq 34/35$;

(v) when $k\in \lbrack 23/17,7/5)$, the double inequality (\ref{M4double})
holds for $x\in (0,\pi /2)$ if $q_{2}\geq 1$ and $q_{1}\leq 34/35$;

(vi) when $k\in \lbrack 0,23/17)$, the double inequality (\ref{M4double})
holds for $x\in (0,\pi /2)$ if $q_{2}\geq 1$ and $q_{1}\leq 8/\left( 5\left(
3-k\right) \right) $;

(vii) when $k\in \left( -\infty ,0\right) $, the double inequality (\ref%
{M4double}) holds for $x\in (0,\pi /2)$ if $q_{2}\geq 1$ and $0\leq
q_{1}\leq 8/\left( 5\left( 3-k\right) \right) $.
\end{theorem}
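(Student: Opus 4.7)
The plan is to deduce each case directly from Corollary \ref{Corollary p=kq}. All the analytic work—the value $T_{kq,q}(0^+) = 1/3$ together with the monotonicity of $T_{kq,q}$ on $(0, \pi/2)$—has already been done, so what remains is a translation step. Specifically, under the substitution $p = kq$, the domain $E_{p,q} = \{p \le 0 \text{ or } 0 < p \le 3q\}$ becomes
\[
E_{kq,q} = \{q \le 0,\ k \ge 0\} \cup \{q \ge 0,\ k \le 3\},
\]
and Remark \ref{Remark T-M-N} tells us that, for $(kq,q) \in E_{kq,q}$, the statement $T_{kq,q}(x) > 1/3$ on $(0,\pi/2)$ is equivalent to
\[
\frac{\sin x}{x} < M(\cos x; kq, q) = \left(1 - \frac{k}{3} + \frac{k}{3}\cos^q x\right)^{1/(kq)},
\]
with reverses matching reverses.

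The argument then reduces to bookkeeping. In each case (i)--(vii) I would read off from Corollary \ref{Corollary p=kq} the $q$-intervals on which $T_{kq,q}$ is increasing (which, via the equivalence above, produces the upper bound in (\ref{M4double}) with parameter $q_2$) or decreasing (which produces the lower bound with parameter $q_1$), and then intersect these intervals with $E_{kq,q}$. For case (i), the assumption $k > 3$ forces $q \le 0$ via $E_{kq,q}$, so the decreasing range $8/(5(3-k)) \le q \le 34/35$ of Corollary \ref{Corollary p=kq}(i) contracts to $8/(5(3-k)) \le q \le 0$, giving the one-sided bound. In cases (ii)--(vi) both monotonicity regimes are available and combine to furnish the full double inequality (\ref{M4double}) by applying the equivalence separately with parameters $q_1$ and $q_2$. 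For case (vii), the assumption $k < 0$ forces $q \ge 0$ via $E_{kq,q}$, and Corollary \ref{Corollary p=kq}(vi)'s decreasing regime $q \le 8/(5(3-k))$ therefore shrinks to $0 \le q_1 \le 8/(5(3-k))$, which is exactly the stated constraint.

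I do not anticipate a genuine obstacle, since all the hard monotonicity analysis has been absorbed into Corollary \ref{Corollary p=kq}. The only subtlety is sign-tracking at the endpoint $8/(5(3-k))$: when $k > 3$ it is negative (sitting to the left of $0$), and when $k < 0$ it is positive (sitting to the right of $0$), so the intersection with $E_{kq,q}$ flips which side of zero the admissible interval occupies. The precise intervals stated in cases (i) and (vii) are exactly what these two sign configurations produce; in cases (ii)--(vi) the $E_{kq,q}$ constraint is vacuous on the relevant ranges, so the translation from Corollary \ref{Corollary p=kq} is immediate. Case-by-case verification is then a routine endpoint check.
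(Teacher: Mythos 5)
Your proposal is correct and matches the paper's treatment: the paper states this theorem as a direct consequence of Corollary \ref{Corollary p=kq} with no further written proof, and your translation via Remark \ref{Remark T-M-N} (increasing $\Rightarrow$ $T_{kq,q}>1/3$ $\Rightarrow$ upper bound with $q_{2}$; decreasing $\Rightarrow$ lower bound with $q_{1}$), together with the intersection with $E_{kq,q}$ in cases (i) and (vii), is exactly the intended bookkeeping.
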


Now we give sharp bounds $M\left( \cos x;kq,q\right) $ for $\left( \sin
x\right) /x$ in the case of $k\in \left( 0,3\right) $. To this end, we note
that $D_{kq,q}\left( \pi /2^{-}\right) $ can be expressed as 
\begin{equation*}
D_{kq,q}\left( \pi /2^{-}\right) =\left\{ 
\begin{array}{ll}
\frac{1-\left( \frac{2}{\pi }\right) ^{kq}}{3q}-\frac{1}{3q} & \text{if }q>0,
\\ 
-\infty & \text{if }q\leq 0.%
\end{array}%
\right.
\end{equation*}%
It is easy to verify that there is a unique number $q\left( k\right) =\frac{%
\ln \left( 1-k/3\right) }{k\ln \left( 2/\pi \right) }$ such that $%
D_{kq,q}\left( \pi /2^{-}\right) >0$ for $q>q\left( k\right) $ and $%
D_{kq,q}\left( \pi /2^{-}\right) <0$ for $q<q\left( k\right) $. Also, we
easily see that $k\mapsto q\left( k\right) $ is increasing on $\left(
0,3\right) $, and $q\left( p_{0}\right) =1$, $q\left( p_{0}^{\ast }\right)
=34/35$. Meanwhile, $p=kq\leq \left( \geq \right) 3q-8/5$ implies that $%
q\geq \left( \leq \right) \frac{8}{5\left( 3-k\right) }$. Based on these
preparations above, using the same method of proof as Theorem \ref{MT2
generalc}'s, we can show the following theorem, whose proof is omitted.

\begin{theorem}
\label{MT4 p=kq sharp}Let $k\in \left( 0,3\right) $ and $x\in (0,\pi /2)$.
Then

(i) if $k\in \lbrack 7/5,3)$, then the inequality%
\begin{equation}
\frac{\sin x}{x}<\left( 1-\frac{k}{3}+\frac{k}{3}\cos ^{q}x\right)
^{1/\left( kq\right) }  \label{M4<}
\end{equation}%
holds if and only if $q\geq 8/\left( 5\left( 3-k\right) \right) $;

(ii) if $k\in \left( p_{0},3\right) $, then the inequality%
\begin{equation}
\left( 1-\frac{k}{3}+\frac{k}{3}\cos ^{q}x\right) ^{1/\left( kq\right) }<%
\frac{\sin x}{x}  \label{M4>}
\end{equation}%
holds if and only if $q\leq \left( \ln \left( 1-k/3\right) \right) /\left(
k\ln \left( 2/\pi \right) \right) $, where $p_{0}\approx 1.42034$ is defined
by (\ref{p_0,p_0*});

(iii) if $k\in (0,23/17]$, then the inequality (\ref{M4>}) holds if and only
if $q\leq 8/\left( 5\left( 3-k\right) \right) $;

(iv) if $k\in \left( 0,p_{0}^{\ast }\right) $, then the inequality (\ref{M4<}%
) holds if and only if $q\geq \left( \ln \left( 1-k/3\right) \right) /\left(
k\ln \left( 2/\pi \right) \right) $, where $p_{0}^{\ast }$ $\approx 1.27754$
is defined by (\ref{p_0,p_0*}).
\end{theorem}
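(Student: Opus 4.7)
The plan is to mirror the proof of Theorem~\ref{MT2 generalc}, now reparametrised via $p=kq$. Writing $D_{kq,q}(x)=S_{kq}(x)/C_{q}(x)-\tfrac13$, inequality (\ref{M4<}) is equivalent to $D_{kq,q}>0$ on $(0,\pi/2)$ and (\ref{M4>}) to $D_{kq,q}<0$. In each of the four parts the necessary condition is extracted from one of the two endpoint limits of $D_{kq,q}$. Lemma~\ref{Lemma E D_p.q=0}(i) gives
\[
\lim_{x\to 0^{+}}\frac{D_{kq,q}(x)}{x^{2}}=-\frac{kq-3q+8/5}{36},
\]
which forces $q\geq 8/(5(3-k))$ in (i) and $q\leq 8/(5(3-k))$ in (iii). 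Lemma~\ref{Lemma E D_p.q=0}(ii) gives $D_{kq,q}(\pi/2^{-})=k^{-1}(1-(2/\pi)^{kq})-\tfrac13$, whose unique zero in $q$ lies at $q(k)=\ln(1-k/3)/(k\ln(2/\pi))$, yielding the binding necessary conditions $q\leq q(k)$ in (ii) and $q\geq q(k)$ in (iv).

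Sufficiency of (i) and (iii) is immediate from Proposition~\ref{P main1}. Since $p=kq\leq 3q-8/5$ is equivalent to $q\geq 8/(5(3-k))$, the hypotheses of (i) force $q\geq 1$ (because $k\geq 7/5$), so Proposition~\ref{P main1}(i) makes $T_{kq,q}$ increasing and hence $T_{kq,q}(x)>T_{kq,q}(0^{+})=\tfrac13$, which is (\ref{M4<}). Dually, the hypotheses of (iii) force $q\leq 34/35$ (because $k\leq 23/17$) and $p\geq 3q-8/5$, so Proposition~\ref{P main1}(iv) makes $T_{kq,q}$ decreasing and hence $T_{kq,q}(x)<\tfrac13$, which is (\ref{M4>}).

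The substantive cases are (ii) and (iv). Because $M(\cos x;kq,q)$ is increasing in $q$ for $k\in(0,3)$ (by the remark preceding the theorem), it suffices to establish each inequality at the boundary value $q=q(k)$. At this parameter $T_{kq,q}(0^{+})=T_{kq,q}(\pi/2^{-})=\tfrac13$, so no direct endpoint monotonicity of $T_{kq,q}$ is available and I invoke Lemma~\ref{Lemma Yang}. Setting $h(x):=kq(k)-g_{1}(x)$, by (\ref{df1}) and (\ref{f2another}) one has $\operatorname{sgn} f_{1}'(x)=-\operatorname{sgn} h(x)$. In case (ii), Lemma~\ref{Lemma E D_p.q=0}(iv) gives $q(k)>1$, so Lemma~\ref{Lemma g1}(i) makes $g_{1}$ increasing and $h$ strictly decreasing, with $h(\pi/2^{-})=-\infty$; in case (iv), $q(k)<34/35$, so Lemma~\ref{Lemma g1}(ii) makes $g_{1}$ decreasing and $h$ strictly increasing, with $h(\pi/2^{-})=+\infty$. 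Once $h(0^{+})$ is shown to have the opposite sign to $h(\pi/2^{-})$, $h$ has a unique zero $x_{0}\in(0,\pi/2)$ and $f_{1}=S_{kq}'/C_{q}'$ is decreasing-then-increasing in case (ii) or increasing-then-decreasing in case (iv). Combining $f_{1}(0^{+})=\tfrac13$ (a direct Taylor computation from (\ref{f1})) with the endpoint ratio $T_{kq,q}(\pi/2^{-})=\tfrac13$, Lemma~\ref{Lemma Yang} delivers $D_{kq,q}<0$ on $(0,\pi/2)$ in case (ii) and $D_{kq,q}>0$ in case (iv).

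The one non-routine step is determining the sign of $h(0^{+})=kq(k)-3q(k)+8/5$. Via the identity $(2/\pi)^{kq(k)}=1-k/3$, this rewrites as $H(p):=p-p/(1-(2/\pi)^{p})+8/5$ with $p=kq(k)$, which is precisely the auxiliary function analysed in Theorem~\ref{MT2 generalc}(ii): the differentiation performed there shows $H$ is increasing in $p$, while $H(p_{0})=p_{0}-7/5>0$ and $H(p_{0}^{\ast})=p_{0}^{\ast}-46/35<0$. Since $k\mapsto p=\ln(1-k/3)/\ln(2/\pi)$ is increasing with $p(p_{0})=p_{0}$ and $p(p_{0}^{\ast})=p_{0}^{\ast}$, this yields $h(0^{+})>0$ for $k>p_{0}$ and $h(0^{+})<0$ for $k<p_{0}^{\ast}$, as required. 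Once this sign has been secured, the remainder is a verbatim transcription of the proof of Theorem~\ref{MT2 generalc}, which is why the author felt justified in omitting the details.
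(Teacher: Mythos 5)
Your proposal is correct and is precisely the argument the paper intends: the author omits the proof, saying only that it follows ``the same method of proof as Theorem~\ref{MT2 generalc}'s'', and your reconstruction --- endpoint limits for the necessity, Proposition~\ref{P main1} for parts (i) and (iii), and the Lemma~\ref{Lemma Yang} decreasing--increasing argument at the critical value $q=q(k)$ for parts (ii) and (iv) --- is exactly that method. The one blemish is the asserted identity ``$p(p_{0}^{\ast})=p_{0}^{\ast}$'': in fact $\ln\left(1-p_{0}^{\ast}/3\right)/\ln\left(2/\pi\right)=p_{0}^{\ast}\,q(p_{0}^{\ast})\approx 1.2287<p_{0}^{\ast}\approx 1.2775$, since $q(k)<1$ for $k<p_{0}$ (the paper's own preparatory claim ``$q(p_{0}^{\ast})=34/35$'' is inaccurate in the same way, the true value being about $0.9618$); but because all you actually need is $kq(k)\leq p_{0}^{\ast}$ together with $H$ increasing and $H(p_{0}^{\ast})<0$, the conclusion $h(0^{+})<0$ for $k<p_{0}^{\ast}$ still follows and the proof is unaffected.
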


Taking $k=1,3/2,2$ in Theorem \ref{MT4 p=kq}, we get immediately

\begin{corollary}
For $x\in (0,\pi /2)$, (i) the double inequality%
\begin{equation}
\left( \frac{2}{3}+\frac{1}{3}\cos ^{q_{1}}x\right) ^{1/q_{1}}<\frac{\sin x}{%
x}<\left( \frac{2}{3}+\frac{1}{3}\cos ^{q_{2}}x\right) ^{1/q_{2}}
\label{M4k=1}
\end{equation}%
holds if and only if $q_{1}\leq 4/5$ and $q_{2}\geq \left( \ln 3-\ln
2\right) /\left( \ln \pi -\ln 2\right) \approx 0.89788$;

(ii) the double inequality%
\begin{equation}
\left( \frac{1}{2}+\frac{1}{2}\cos ^{q_{1}}x\right) ^{2/\left( 3q_{1}\right)
}<\frac{\sin x}{x}<\left( \frac{1}{2}+\frac{1}{2}\cos ^{q_{2}}x\right)
^{2/\left( 3q_{2}\right) }  \label{M4k=3/2}
\end{equation}%
holds if and only if $q_{1}\leq \left( 2\ln 2\right) /\left( 3\left( \ln \pi
-\ln 2\right) \right) \approx 1.0233$ and $q_{2}\geq 16/15\approx 1.0667$;

(iii) the double inequality%
\begin{equation}
\left( \frac{1}{3}+\frac{2}{3}\cos ^{q_{1}}x\right) ^{1/\left( 2q_{1}\right)
}<\frac{\sin x}{x}<\left( \frac{1}{3}+\frac{2}{3}\cos ^{q_{2}}x\right)
^{1/\left( 2q_{2}\right) }  \label{M4k=2}
\end{equation}%
holds if and only if $q_{1}\leq \left( \ln 3\right) /\left( 2\left( \ln \pi
-\ln 2\right) \right) \approx 1.2164$ and $q_{2}\geq 8/5$.
\end{corollary}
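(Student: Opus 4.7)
The plan is to derive all three double inequalities as direct specializations of Theorem \ref{MT4 p=kq sharp}, since for each $k \in \{1, 3/2, 2\}$ the parameter falls unambiguously into one of the four ranges listed there, and this selects exactly which part of the theorem controls the lower bound (inequality (\ref{M4>})) and which controls the upper bound (inequality (\ref{M4<})). The critical thresholds to keep in mind are $p_{0}^{\ast}\approx 1.27754$, $23/17\approx 1.3529$, $7/5=1.4$, $p_{0}\approx 1.42034$, and the upper endpoint $3$.

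For $k=1$, since $1\le 23/17$, part (iii) of Theorem \ref{MT4 p=kq sharp} applies to the lower bound and gives the sharp threshold $q_{1}\le 8/(5(3-1))=4/5$; since $1<p_{0}^{\ast}$, part (iv) applies to the upper bound, yielding the sharp threshold $q_{2}\ge \ln(1-1/3)/(1\cdot\ln(2/\pi))=(\ln 3-\ln 2)/(\ln\pi-\ln 2)$. For $k=3/2$, since $3/2>p_{0}$, part (ii) gives the sharp lower-bound threshold $q_{1}\le\ln(1/2)/\bigl((3/2)\ln(2/\pi)\bigr)=(2\ln 2)/\bigl(3(\ln\pi-\ln 2)\bigr)$; since $3/2\in[7/5,3)$, part (i) gives the sharp upper-bound threshold $q_{2}\ge 8/\bigl(5(3-3/2)\bigr)=16/15$. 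For $k=2$, the same two parts apply: part (ii) yields $q_{1}\le\ln(1/3)/\bigl(2\ln(2/\pi)\bigr)=(\ln 3)/\bigl(2(\ln\pi-\ln 2)\bigr)$ and part (i) yields $q_{2}\ge 8/\bigl(5(3-2)\bigr)=8/5$.

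Since the entire argument reduces to substitution together with verifying membership of each $k$ in the correct parameter range, there is no real obstacle; the only points requiring care are (a) checking that $k=1$ indeed satisfies $1<p_{0}^{\ast}$ and $1\le 23/17$ so that the two different ``sharp'' halves of Theorem \ref{MT4 p=kq sharp} apply at the same $k$, and (b) confirming the stated decimal approximations by direct numerical evaluation of the logarithmic expressions. The necessity and sufficiency in each part of the corollary are simply inherited from the corresponding ``if and only if'' clause of Theorem \ref{MT4 p=kq sharp}.
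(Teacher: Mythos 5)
Your proposal is correct and is essentially the paper's own argument: the corollary is obtained by substituting $k=1,3/2,2$ into the sharp theorem, with the lower bound governed by part (iii) (for $k=1\le 23/17$) or part (ii) (for $k=3/2,2>p_{0}$) and the upper bound by part (iv) (for $k=1<p_{0}^{\ast }$) or part (i) (for $k=3/2,2\ge 7/5$), exactly as you lay out. The only minor point worth noting is that the paper's preamble cites Theorem \ref{MT4 p=kq} (the non-sharp version), but the ``if and only if'' thresholds clearly come from Theorem \ref{MT4 p=kq sharp}, which is the theorem you correctly invoke.
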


\begin{remark}
Inequalities (\ref{M4k=1}) is exactly (\ref{Yang3}) given in \cite%
{Yang.MIA.17.2.2014}.
\end{remark}

\subsection{In the case of $p=3q-8/5$}

When $p=3q-8/5$, $E_{p,q}=\{\left( p,q\right) :p\leq 0$ or $0<p\leq 3q\}$ is
changed into%
\begin{equation}
E_{3q-8/5,q}=\{3q-8/5\leq 0\text{ or }0<3q-8/5\leq 3q\}=\mathbb{R}.
\label{Op=3q-8/5}
\end{equation}%
While $M\left( t;3q-8/5,q\right) $ can be completely written as%
\begin{equation}
M\left( t;3q-\tfrac{8}{5},q\right) =\left\{ 
\begin{array}{ll}
\left( \frac{8}{15q}+\left( 1-\frac{8}{15q}\right) t^{q}\right) ^{5/\left(
15q-8\right) } & \text{if }q\neq 0,\tfrac{8}{15}, \\ 
\left( 1-\frac{8}{15}\ln t\right) ^{-5/8} & \text{if }q=0, \\ 
\exp \frac{5\left( t^{8/15}-1\right) }{8} & \text{if }q=\tfrac{8}{15},%
\end{array}%
\right.  \label{Mp=3q=8/5}
\end{equation}%
where $t=\cos x\in \left( 0,1\right) $ for $x\in \left( 0,\pi /2\right) $. $%
N\left( t;3q-8/5,q\right) $ can be expressed as%
\begin{eqnarray*}
N\left( t;3q-\tfrac{8}{5},q\right) &=&\left( \left( \tfrac{2}{\pi }\right)
^{3q-8/5}+\left( 1-\left( \tfrac{2}{\pi }\right) ^{3q-8/5}\right)
t^{q}\right) ^{1/\left( 3q-8/5\right) }\text{if }q\neq \tfrac{8}{15},q>0 \\
N\left( t;0,\tfrac{8}{15}\right) &=&\left( \tfrac{2}{\pi }\right)
^{1-t^{8/15}}\text{ if }q=\tfrac{8}{15}\text{.}
\end{eqnarray*}%
For the monotonicity of $M\left( t;3q-8/5,q\right) $ in $q$, we can prove
the following

\begin{lemma}
\label{Lemma Mincreasing}Let $\left( t,q\right) \mapsto M\left(
t;3q-8/5,q\right) $ be defined on $\left( 0,1\right) \times \mathbb{R}$ by (%
\ref{Mp=3q=8/5}). Then $q\mapsto M\left( t;3q-8/5,q\right) $ is increasing
on $\mathbb{R}$, and we have%
\begin{equation}
\lim_{q\rightarrow -\infty }M\left( t;3q-8/5,q\right) =t^{1/3}\text{ \ and \ 
}\lim_{q\rightarrow \infty }M\left( t;3q-8/5,q\right) =1.  \label{Limit M}
\end{equation}
\end{lemma}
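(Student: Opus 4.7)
The plan is to prove the two limits and the monotonicity separately. For the limits I would work directly from the unified formula $M^{(15q-8)/5} = (8 + (15q-8)t^q)/(15q)$ (valid for $q \neq 0, 8/15$), performing asymptotic analysis as $q \to \pm\infty$. Since $t \in (0,1)$, $t^q$ decays exponentially as $q \to +\infty$, so the base is asymptotic to $8/(15q)$ and $\ln M \sim \frac{5}{15q-8}\ln\frac{8}{15q} \to 0$, yielding $M \to 1$. As $q \to -\infty$, $t^q$ grows exponentially, so the base is asymptotic to $t^q$, giving $\ln M \sim \frac{5q\ln t}{15q-8} \to \frac{\ln t}{3}$ and hence $M \to t^{1/3}$.

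For monotonicity, set $H(q) := \ln M(t; 3q - 8/5, q)$ and $p := (15q-8)/5$. Logarithmically differentiating the identity $15q M^p = 8 + (15q-8)t^q$ with respect to $q$ produces
\begin{equation*}
p H'(q) = -3 H(q) + \frac{(15q-8)\, t^q \ln t}{15q\, M^p} + \frac{8(t^q-1)}{15q^2\, M^p}.
\end{equation*}
To show $H'(q) > 0$, it suffices to prove that the right-hand side has the same sign as $p$. After multiplying through by $15q^2 M^p > 0$ and substituting $M^p = (8+(15q-8)t^q)/(15q)$, this reduces to showing that the function
\begin{equation*}
\Phi(t,q) := -45\, q^2 M^p H(q) + q(15q-8)\, t^q \ln t + 8(t^q-1)
\end{equation*}
has the same sign as $15q-8$ on $(0,1) \times (\mathbb{R}\setminus\{0, 8/15\})$. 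The special values $q = 0$ and $q = 8/15$ are then handled by continuity, verified by direct L'H\^opital analysis (one may check that $\Phi/(15q-8)$ has a positive finite limit at both transition points).

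The main obstacle will be establishing the sign of $\Phi(t,q)$, which numerical exploration shows is a tight inequality (the quantity can be of order $10^{-3}$ even for modest $t$, $q$). A natural approach is to expand $\Phi(t,q)$ as a power series in $s := t^q - 1$ (equivalently, around $t = 1$, where $\Phi = 0$), substitute the series for $H(q)$ via $\ln((8 + (15q-8)(1+s))/15q) = \ln(1 + s(15q-8)/15q)$ and show, using arguments in the spirit of Lemmas \ref{Lemma g1} and \ref{Lemma g2}, that every coefficient has the same sign as $15q-8$. The key elementary input is the sharp inequality $\ln x - (x-1) \le 0$ from Lemma \ref{Lemma u_p}, applied to the ratio $M^p$ or to $t^q$ in order to compare logarithmic and linear growth in a critical step. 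Alternatively, one may integrate the representation $H(q) = 5\int_1^{t^q} ds/(8 + (15q-8)s)$ and differentiate under the integral to reduce positivity of $H'(q)$ to a single inequality involving the primitive $\ln|8+(15q-8)s|$, which again ultimately rests on the same $\ln x \le x-1$ estimate.
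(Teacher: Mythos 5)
Your treatment of the two limits is fine and matches what the paper does (it simply states ``a direct computation yields'' the limits). The problem is the monotonicity claim: you correctly reduce it to showing $\partial\ln M/\partial q>0$, i.e.\ to the sign of your function $\Phi(t,q)$, but you never actually establish that sign. You explicitly call it ``the main obstacle,'' note that numerically the inequality is tight, and then offer two candidate strategies --- a coefficientwise power-series argument in $s=t^q-1$ and an integral representation --- without carrying either one out or even verifying that the coefficients in question are sign-definite. As written, the heart of the lemma is asserted, not proved, so this is a genuine gap rather than a routine omission.

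The idea that closes the gap in the paper is worth knowing because it is short: instead of wrestling with $\partial\ln M/\partial q$ as a function of $q$, differentiate it once more \emph{in $t$}. One computes
\begin{equation*}
\frac{\partial \ln M}{\partial t}=\frac{5q\,t^{q-1}}{(15q-8)t^{q}+8},
\qquad
\frac{\partial ^{2}\ln M}{\partial q\,\partial t}=\frac{40\,t^{q}\left( \ln t^{q}-t^{q}+1\right) }{t\left( (15q-8)t^{q}+8\right) ^{2}}<0,
\end{equation*}
the sign coming from $\ln x\leq x-1$ (the very estimate you flagged as the ``key elementary input,'' but applied to $x=t^{q}$ inside the mixed partial, not to $M^{p}$). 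Hence $t\mapsto\partial\ln M/\partial q$ is decreasing on $(0,1)$, and since it vanishes at $t=1$ (where $M\equiv 1$), it is positive for all $t\in(0,1)$. This one-line boundary-comparison argument replaces the entire analysis of $\Phi$; if you want to salvage your write-up, I would recommend abandoning the series/integral routes and inserting this mixed-partial computation, together with a word on the excluded values $q=0$ and $q=8/15$ (continuity of $q\mapsto M$ there suffices).
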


\begin{proof}
For $q\neq 0,8/15$, logarithmic differentiation yields 
\begin{eqnarray*}
\frac{\partial \ln M}{\partial q} &=&-3\frac{\ln \left( \frac{8}{15q}+\left(
1-\frac{8}{15q}\right) t^{q}\right) }{\left( 3q-\frac{8}{5}\right) ^{2}}-%
\frac{\frac{8}{15q^{2}}\left( 1-t^{q}\right) +t^{q}\left( \frac{8}{15q}%
-1\right) \ln t}{\left( \frac{8}{15q}+\left( 1-\frac{8}{15q}\right)
t^{q}\right) \left( 3q-\frac{8}{5}\right) }, \\
\frac{\partial \ln M}{\partial t} &=&5q\frac{t^{q-1}}{\left( 15q-8\right)
t^{q}+8}, \\
\frac{\partial ^{2}\ln M}{\partial q\partial t} &=&40t^{q}\frac{\ln
t^{q}-t^{q}+1}{t\left( \left( 15q-8\right) t^{q}+8\right) ^{2}}<0,
\end{eqnarray*}%
where the inequality holds due to $\ln x\leq x-1$ for $x>0$. Hence, $%
\partial \left( \ln M\right) /\partial q$ is decreasing in $t$, and so we
have%
\begin{equation*}
\frac{\partial \ln M}{\partial q}\left( t;3q-8/5,q\right) >\frac{\partial
\ln M}{\partial q}\left( 1;3q-8/5,q\right) =0,
\end{equation*}%
which means that $q\mapsto M\left( t;3q-8/5,q\right) $ has increasing
property. A direct computation yields (\ref{Limit M}).
\end{proof}

By Corollary \ref{Corollary p=3q-8/5} we immediately get

\begin{theorem}
\label{MT5 p=3q-8/5a}Let $x\in (0,\pi /2)$.

(i) If $q\geq 1$, then the double inequality%
\begin{equation}
N\left( \cos x,3q-\tfrac{8}{5},q\right) <\frac{\sin x}{x}<M\left( \cos x,3q-%
\tfrac{8}{5},q\right) \text{,}  \label{M5}
\end{equation}%
hold, where $M\left( t;3q-8/5,q\right) $ and $N\left( t;3q-8/5,q\right) $\
are defined by (\ref{M}) and (\ref{N}), respectively.

(ii) If $0<q\leq 34/35$, then the double inequality (\ref{M5}) is reversed.

(iii) If $q\leq 0$, then the inequality%
\begin{equation*}
\frac{\sin x}{x}>M\left( \cos x,3q-\tfrac{8}{5},q\right)
\end{equation*}%
holds.
\end{theorem}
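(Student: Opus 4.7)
The plan is to deduce Theorem \ref{MT5 p=3q-8/5a} directly from Corollary \ref{Corollary p=3q-8/5}, which supplies the monotonicity of $T_{3q-8/5,q}$, together with the boundary-value formulas for $T_{p,q}$ at $0^+$ and $\pi/2^-$ stated just before Theorem \ref{MT1 general}, and the translation dictionary given in Remark \ref{Remark T-M-N}. Since, as noted in (\ref{Op=3q-8/5}), the locus $p=3q-8/5$ lies in $E_{p,q}=\mathbb{R}$ for every $q\in\mathbb{R}$, part (i) of Remark \ref{Remark T-M-N} applies unconditionally, while part (ii) applies whenever $q>0$.

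For part (i), I would invoke Corollary \ref{Corollary p=3q-8/5} to conclude that, for $q\ge 1$, the function $T_{3q-8/5,q}$ is strictly increasing on $(0,\pi/2)$. Hence
\begin{equation*}
\tfrac{1}{3} \;=\; T_{3q-8/5,q}(0^+) \;<\; T_{3q-8/5,q}(x) \;<\; T_{3q-8/5,q}(\pi/2^-)
\end{equation*}
for $x\in(0,\pi/2)$. By Remark \ref{Remark T-M-N}(i), the lower estimate $T_{3q-8/5,q}(x)>1/3$ is equivalent to $\sin x/x < M(\cos x;3q-8/5,q)$, and by Remark \ref{Remark T-M-N}(ii) (applicable since $q>0$), the upper estimate $T_{3q-8/5,q}(x)<T_{3q-8/5,q}(\pi/2^-)$ is equivalent to $\sin x/x > N(\cos x;3q-8/5,q)$. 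Assembling these yields (\ref{M5}).

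For part (ii), Corollary \ref{Corollary p=3q-8/5} gives that $T_{3q-8/5,q}$ is strictly decreasing for $0<q\le 34/35$. The chain of boundary comparisons above is therefore reversed, and invoking Remark \ref{Remark T-M-N}(i)--(ii) in the same way produces the reversed double inequality. For part (iii), when $q\le 0$ we have $q\le 34/35$, so $T_{3q-8/5,q}$ is again strictly decreasing by Corollary \ref{Corollary p=3q-8/5}; thus $T_{3q-8/5,q}(x) < T_{3q-8/5,q}(0^+) = 1/3$ on $(0,\pi/2)$, which by Remark \ref{Remark T-M-N}(i) translates to $\sin x/x > M(\cos x;3q-8/5,q)$. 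Note that Remark \ref{Remark T-M-N}(ii) is not available here (we would need $q>0$), and indeed $T_{3q-8/5,q}(\pi/2^-)=0$ makes the $N$-bound degenerate, which is consistent with the statement of (iii) only containing the $M$-inequality.

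There is no genuine obstacle at this stage: the heavy lifting, namely the sign analysis of $f_2$ via Lemmas \ref{Lemma g1}--\ref{Lemma g2} and the resulting Corollary \ref{Corollary p=3q-8/5}, has already been carried out. The only points requiring a brief verification are the automatic membership $(3q-8/5,q)\in E_{p,q}=\mathbb{R}$ in the sense of (\ref{Op=3q-8/5}) (so that the $M$-translation is always legitimate) and the case distinction on the sign of $q$ dictating whether the $N$-translation is available; both are immediate from the definitions.
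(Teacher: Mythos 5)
Your proposal is correct and follows exactly the route the paper intends: the paper states this theorem as an immediate consequence of Corollary \ref{Corollary p=3q-8/5} together with the boundary values of $T_{p,q}$ and the translation dictionary of Remark \ref{Remark T-M-N}, and you have simply made the ``immediately'' explicit, including the correct observations that $(3q-8/5,q)\in E_{p,q}=\mathbb{R}$ always holds and that the $N$-bound is unavailable (indeed degenerate) when $q\leq 0$. No gaps.
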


In order to establish sharp inequalities $\left( \sin x\right) /x<\left(
>\right) M\left( \cos x;3q-8/5,q\right) $, the following lemma is also
needed.

\begin{lemma}
\label{Lemma E D_p.qs}Let $M\left( t;3q-8/5,q\right) $ be defined by (\ref%
{Mp=3q=8/5}). Then there is a unique $q_{0}\approx 0.989681$ such that $%
D_{3q-8/5,q}\left( \pi /2^{-}\right) >0$ for $q>q_{0}$ and $%
D_{3q-8/5,q}\left( \pi /2^{-}\right) <0$ for $q<q_{0}$.
\end{lemma}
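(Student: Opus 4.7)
The plan is to reduce the statement to a straightforward unimodality analysis of a single auxiliary function. For $q > 0$ with $p = 3q - 8/5 \neq 0$, clearing denominators in the formula from Lemma \ref{Lemma E D_p.q=0} gives the identity
\[
D_{3q-8/5,q}(\pi/2^-) = \frac{q(1-(2/\pi)^{3q-8/5})}{3q-8/5} - \frac{1}{3} = -\frac{G(q)}{3(3q-8/5)},
\]
where $G(q) := 3q(2/\pi)^{3q-8/5} - 8/5$. Thus, away from $q = 8/15$, the equation $D_{3q-8/5,q}(\pi/2^-) = 0$ is equivalent to $G(q) = 0$. At the exceptional point $q = 8/15$ (where $p = 0$), I would compute from the $p = 0$ branch of Lemma \ref{Lemma E D_p.q=0} directly that
\[
D_{0,\,8/15}(\pi/2^-) = \tfrac{8}{15}\ln(\pi/2) - \tfrac{1}{3} < 0,
\]
so $q = 8/15$ is \emph{not} a zero of $D_{3q-8/5,q}(\pi/2^-)$, even though it turns out to be a trivial zero of $G$ (observe $G(8/15) = \tfrac{8}{5}-\tfrac{8}{5} = 0$).

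Next I would describe the global structure of $G$ on $(0,\infty)$. Differentiation gives
\[
G'(q) = 3(2/\pi)^{3q-8/5}\bigl(1 + 3q\ln(2/\pi)\bigr),
\]
which has a unique zero at $q^{**} = (3\ln(\pi/2))^{-1}\approx 0.7381$, is positive to the left of $q^{**}$ and negative to the right. Combined with the endpoint values $G(0^+) = -8/5$ and $\lim_{q\to\infty}G(q) = -8/5$ (exponential decay beats linear growth), together with $G(q^{**}) > 0$ (a quick numerical check), $G$ is therefore unimodal on $(0,\infty)$ with exactly two zeros: the trivial one at $q = 8/15 < q^{**}$ on the increasing branch, and a second zero $q_0 > q^{**}$ on the decreasing branch.

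Finally, I would read off the sign of $D_{3q-8/5,q}(\pi/2^-) = -G(q)/(3(3q-8/5))$ from the unimodality of $G$ and the sign of $3q-8/5$. On $(0,8/15)$ both $G < 0$ and $3q-8/5 < 0$, giving $D<0$; at $q=8/15$, $D<0$ by the direct computation above; on $(8/15,q_0)$ one has $G>0$ and $3q-8/5>0$, so $D<0$; and on $(q_0,\infty)$ one has $G<0$ and $3q-8/5>0$, so $D>0$. Combined with the $q\leq 0$ branch of Lemma \ref{Lemma E D_p.q=0}, which trivially yields $D=-1/3<0$, the claimed unique sign change at $q_0$ follows, and Newton's iteration applied to $G$ on the decreasing branch confirms $q_0\approx 0.989681$. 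The main subtlety in this plan is correctly handling the removable singularity at $q=8/15$: the zero of $G$ there is a book-keeping artifact of the rewriting and must be excluded to avoid spuriously concluding a second zero of $D_{3q-8/5,q}(\pi/2^-)$.
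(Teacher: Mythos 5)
Your proposal is correct and takes essentially the same route as the paper's proof: both clear the factor $3q-8/5$, treat $q=8/15$ separately as a removable point where the cleared expression has a spurious zero, and reduce the claim to a single auxiliary function with exactly one interior critical point at $q=(3\ln(\pi/2))^{-1}$, one zero at $q=8/15$, and a second zero at $q_{0}\approx 0.989681$. Your $G(q)=3q(2/\pi)^{3q-8/5}-\tfrac{8}{5}$ is just a monotone transform of the paper's $v(q)=\ln\tfrac{8}{15q}-\left(3q-\tfrac{8}{5}\right)\ln\tfrac{2}{\pi}$ (one checks $G=\tfrac{8}{5}\left(e^{-v}-1\right)$, so the two have the same zeros and opposite signs), the only difference being that the paper passes to $v$ via the logarithmic-mean identity before differentiating while you differentiate the un-logged expression directly.
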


\begin{proof}
From (\ref{Limit2}) we have%
\begin{equation}
D_{3q-8/5,q}\left( \pi /2^{-}\right) =\left\{ 
\begin{array}{ll}
\frac{1-\left( \frac{2}{\pi }\right) ^{3q-8/5}}{3q-8/5}-\frac{1}{3q} & \text{%
if }q>0,q\neq 8/15, \\ 
-\ln \frac{2}{\pi }-\frac{5}{8}<0 & \text{if }q=8/15, \\ 
-\infty & \text{if }q\leq 0\text{.}%
\end{array}%
\right.  \label{D_1q-8/5,q}
\end{equation}%
It is obvious that $D_{3q-8/5,q}\left( \pi /2^{-}\right) <0$ for $q\leq 8/15$%
, and it remains to show that $D_{3q-8/5,q}\left( \pi /2^{-}\right) <0$ for $%
q\in \left( 8/15,q_{0}\right) $ and $D_{3q-8/5,q}\left( \pi /2^{-}\right) >0$
for $q>q_{0}$. For $q>8/15$, we have%
\begin{equation*}
\left( 3q-8/5\right) D_{3q-8/5,q}\left( \pi /2^{-}\right) =\tfrac{8}{15q}%
-\left( \tfrac{2}{\pi }\right) ^{3q-8/5}=L\left( \tfrac{8}{15q},\left( 
\tfrac{2}{\pi }\right) ^{3q-8/5}\right) \times v\left( q\right) ,
\end{equation*}%
where $L\left( x,y\right) $ is the logarithmic mean of positive $x$ and $y$,
and%
\begin{equation*}
v\left( q\right) =\ln \tfrac{8}{15q}-\left( 3q-8/5\right) \ln \tfrac{2}{\pi }%
.
\end{equation*}%
Differentiation leads us to%
\begin{equation*}
v^{\prime }\left( q\right) =\frac{3}{q}\left( q-\frac{1}{3\ln \left( \pi
/2\right) }\right) \ln \frac{\pi }{2},
\end{equation*}%
which tell us that $v$ is increasing on $q\geq \left( 3\ln \left( \pi
/2\right) \right) ^{-1}\approx 0.73814$ and decreasing on $8/15<q\leq \left(
3\ln \left( \pi /2\right) \right) ^{-1}$. Hence, we have $v\left( q\right)
<v\left( 8/15\right) =0$ for $8/15<q\leq \left( 3\ln \left( \pi /2\right)
\right) ^{-1}$, but%
\begin{equation*}
v\left( 1\right) =\ln \frac{8}{15}-\frac{7}{5}\ln \frac{2}{\pi }\approx
3.6071\times 10^{-3}>0,
\end{equation*}%
and it follows that there is a unique number $q_{0}$ to satisfy $v\left(
q_{0}\right) =0$ such that $v\left( q\right) <0$ for $q\in \left(
8/15,q_{0}\right) $ and $v\left( q\right) >0$ for $q\in \left( q_{0},\infty
\right) $. Solving the equation $v\left( q_{0}\right) =0$ by mathematical
computer software we find that $q_{0}\approx 0.989681$.

This proves the lemma.
\end{proof}

\begin{theorem}
\label{MT5 p=3q-8/5b}Let $x\in (0,\pi /2)$. Then the inequality $\left( \sin
x\right) /x>M\left( \cos x;3q-8/5,q\right) $ holds if and only if $q\leq
34/35$, while its reverse holds if and only if $q\geq q_{0}\approx 0.989681$%
, where $q_{0}$ is the unique root of equation $D_{3q-8/5,q}\left( \pi
/2^{-}\right) =0$ on $\mathbb{R}_{+}$, here $D_{3q-8/5,q}\left( \pi
/2^{-}\right) $ is defined by (\ref{D_1q-8/5,q}).
\end{theorem}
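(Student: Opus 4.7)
The approach is to translate both inequalities into sign conditions on $D_{3q-8/5,q}(x)=S_p(x)/C_q(x)-1/3$ via Remark \ref{Remark T-M-N} (which applies for all $q$ since $E_{3q-8/5,q}=\mathbb{R}$ by (\ref{Op=3q-8/5})), and then dispatch each direction using the boundary-data and monotonicity results of Section 2. For necessity: if $D_{3q-8/5,q}<0$ on $(0,\pi/2)$ then (\ref{Limit1s}) gives $\lim_{x\to 0^+}D_{3q-8/5,q}(x)/x^4=(q-34/35)/135\le 0$, so $q\le 34/35$; if $D_{3q-8/5,q}>0$ then $D_{3q-8/5,q}(\pi/2^-)\ge 0$, which by Lemma \ref{Lemma E D_p.qs} forces $q\ge q_0$.

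Sufficiency for the ``$>$'' direction is immediate from Corollary \ref{Corollary p=3q-8/5}: when $q\le 34/35$, $T_{3q-8/5,q}$ is decreasing on $(0,\pi/2)$, so $T_{3q-8/5,q}(x)<T_{3q-8/5,q}(0^+)=1/3$. For the ``$<$'' direction, the same corollary handles $q\ge 1$ (then $T_{3q-8/5,q}$ is increasing, so exceeds $1/3$). The remaining range is $q\in[q_0,1)$, for which Lemma \ref{Lemma Mincreasing} makes $q\mapsto M(\cos x;3q-8/5,q)$ increasing in $q$; thus the inequality propagates to larger $q$, and it suffices to prove it at the single endpoint $q=q_0$.

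The main obstacle is this boundary case $q=q_0\in(34/35,1)$, which lies outside both monotonicity regimes of Lemma \ref{Lemma g1}, so Corollary \ref{Corollary p=3q-8/5} is unavailable. My plan is to apply Lemma \ref{Lemma Yang} with $f=S_p$, $g=C_q$, $p=3q_0-8/5$ and $\lambda=1/3$. The boundary condition (\ref{lambda}) holds with equality: $T_{p,q_0}(\pi/2^-)=1/3$ by the definition of $q_0$, while $S_p'(0^+)/C_q'(0^+)=1/3$ by direct expansion. The shape hypothesis, via (\ref{df1})--(\ref{f2another}), reduces to showing that $h(x):=p-g_1(x)$ changes sign exactly once on $(0,\pi/2)$. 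The endpoint values $h(0^+)=p-(3q_0-8/5)=0$ and $h(\pi/2^-)=+\infty$ (since $A(x)\to 0^+$ while $q_0B(x)-C(x)\to (q_0-1)\pi/2<0$) are routine; the delicate step, which I expect to be the chief technical hurdle, is to verify that $g_1$ is unimodal on $(0,\pi/2)$ for $q=q_0$. The coefficient sequence $(q_0 b_n-c_n)/a_n$ from the proof of Lemma \ref{Lemma g1} is unimodal in $n$ (increasing at $n=2$, strictly decreasing for $n\ge 3$), making unimodality of $g_1$ plausible but not automatic, since Biernacki's lemma (Lemma \ref{Lemma monotocity of A/B}) lifts only monotonicity, not unimodality, of coefficients. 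Once this single sign change of $h$ is established, Case 1 of Lemma \ref{Lemma Yang} yields $D_{p,q_0}>0$ throughout $(0,\pi/2)$, closing the sufficiency argument.
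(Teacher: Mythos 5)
Your overall architecture matches the paper's: necessity of $q\le 34/35$ from the fourth-order limit (\ref{Limit1s}), necessity of $q\ge q_0$ from $D_{3q-8/5,q}(\pi/2^-)\ge 0$ and Lemma \ref{Lemma E D_p.qs}, sufficiency for $q\le 34/35$ and $q\ge 1$ from Corollary \ref{Corollary p=3q-8/5}, and Lemma \ref{Lemma Yang} for the remaining window. But the step you yourself flag as the ``chief technical hurdle'' --- that $f_2$ changes sign exactly once on $(0,\pi/2)$ when $q\in(34/35,1)$ --- is precisely the mathematical content of this case, and your proposal does not supply it. As you correctly observe, the coefficient ratio $(q_0b_n-c_n)/a_n$ is increasing at $n=2$ and decreasing for $n\ge 3$, and Lemma \ref{Lemma monotocity of A/B} lifts only monotone coefficient ratios; unimodality of $g_1$ does not follow from it, and no substitute argument is given. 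So the proof is incomplete exactly where it needs to be complete.

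The paper's resolution is a different factorization of the same quantity, which is why Lemma \ref{Lemma g2} exists: instead of $f_2=A\cdot(p-g_1)$ one writes, for $p=3q-8/5$,
\begin{equation*}
f_2(x)=\left(3q-\tfrac{8}{5}\right)A(x)-qB(x)+C(x)=-\bigl(B(x)-3A(x)\bigr)\bigl(q-g_2(x)\bigr),\qquad g_2=\frac{C-\tfrac{8}{5}A}{B-3A}.
\end{equation*}
Here $B-3A>0$ and the coefficient ratio $(c_n-\tfrac{8}{5}a_n)/(b_n-3a_n)$ is genuinely monotone, so Biernacki's lemma applies and gives that $g_2$ is increasing with range exactly $(34/35,1)$. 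Hence for \emph{every} $q\in(34/35,1)$ the factor $q-g_2(x)$ changes sign exactly once, from positive to negative, which yields the required shape of $S_p'/C_q'$ and lets Lemma \ref{Lemma Yang} close the argument uniformly on $q\in[q_0,1)$ (so the reduction to the single value $q=q_0$ via Lemma \ref{Lemma Mincreasing}, while valid, is not even needed). If you want to salvage your route, replace the unproven unimodality claim for $g_1$ by this $g_2$-factorization; as written, the proposal has a genuine gap.
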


\begin{proof}
Clearly, the inequality $\left( \sin x\right) /x<\left( >\right) M\left(
\cos x;3q-8/5,q\right) $ is equivalent to $D_{3q-8/5,q}\left( x\right)
:=S_{3q-8/5}\left( x\right) /C_{q}\left( x\right) -1/3>\left( <\right) 0$
for $x\in (0,\pi /2)$, where $M\left( t;3q-8/5,q\right) $ is defined by (\ref%
{Mp=3q=8/5}).

(i) We first prove that the inequality $\left( \sin x\right) /x>M\left( \cos
x;3q-8/5,q\right) $ if and only if $q\leq 34/35$. The necessity is due to $%
\lim_{x\rightarrow 0^{+}}x^{-4}D_{3q-8/5,q}\left( x\right) \leq 0$. By (\ref%
{Limit1s}) we get 
\begin{equation*}
\lim_{x\rightarrow 0^{+}}\frac{D_{3q-8/5,q}\left( x\right) }{x^{4}}=\frac{1}{%
135}\left( q-\frac{34}{35}\right) \leq 0,
\end{equation*}%
which yields $q\leq 34/35$. The sufficiency is obtained from Theorem \ref%
{MT5 p=3q-8/5a}.

(ii) Now we show that the inequality $\left( \sin x\right) /x<M\left( \cos
x;3q-8/5,q\right) $ if and only if $q\geq q_{0}\approx 0.989681$. The
necessity can be derived by the relation $D_{3q-8/5,q}\left( \pi
/2^{-}\right) =S_{p}\left( \pi /2^{-}\right) /C_{q}\left( \pi /2^{-}\right)
-1/3\geq 0$, which follows by part (ii) of Lemma \ref{Lemma E D_p.qs}.

Next we show the sufficiency. In the case of $q\geq 1$, it is obviously true
by Theorem \ref{MT5 p=3q-8/5a}. In the case of $q\in \lbrack p_{0},1)$, we
see that $f_{2}\left( x\right) $ defined by (\ref{f2}) can be written as%
\begin{equation}
f_{2}\left( x\right) =\left( 3q-8/5\right) A\left( x\right) -qB\left(
x\right) +C\left( x\right) =-\left( B\left( x\right) -3A\left( x\right)
\right) \left( q-g_{2}\left( x\right) \right) ,  \label{f2s}
\end{equation}%
where $g_{2}\left( x\right) $ is defined by (\ref{g2}) and $\left( B\left(
x\right) -3A\left( x\right) \right) >0$ for $x\in (0,\pi /2)$. By Lemma \ref%
{Lemma g2}, we see that $g_{2}=\left( C\left( x\right) -\left( 8/5\right)
A\left( x\right) \right) /\left( B\left( x\right) -3A\left( x\right) \right) 
$ is increasing on $(0,\pi /2)$, and so $x\mapsto q-g_{2}\left( x\right)
:=j\left( x\right) $ is decreasing on $(0,\pi /2)$. But,%
\begin{equation*}
j\left( 0^{+}\right) =q-34/35>0\text{ \ and }j\left( \frac{\pi }{2}%
^{-}\right) =q_{0}-1<0,
\end{equation*}%
then it is seen that there is a unique number $x_{3}\in (0,\pi /2)$ such
that $j\left( x\right) >0$ for $x\in \left( 0,x_{3}\right) $ and $j\left(
x\right) <0$ for $x\in \left( x_{3},\pi /2\right) $. This together with (\ref%
{f2s}) and $\left( B\left( x\right) -3A\left( x\right) \right) >0$ means
that $f_{2}\left( x\right) <0$ for $x\in \left( 0,x_{3}\right) $ and $%
f_{2}\left( x\right) >0$ for $x\in \left( x_{3},\pi /2\right) $. From the
relations (\ref{f2another}) and (\ref{df1}) it is derived that the function $%
x\mapsto S_{3q-8/5}^{\prime }\left( x\right) /C_{q}^{\prime }\left( x\right) 
$ is increasing on $(0,x_{3}]$\ and decreasing on $\left( x_{3},\pi
/2\right) $. Since $C_{q}^{\prime }\left( x\right) =\cos ^{q-1}x\sin x>0$
for $x\in \left( 0,\pi /2\right) $ and for $q\in \lbrack p_{0},1)$ the
relation%
\begin{equation*}
\frac{S_{3q-8/5}\left( \frac{\pi }{2}^{-}\right) -S_{3q-8/5}\left(
0^{+}\right) }{C_{q}\left( \frac{\pi }{2}^{-}\right) -C_{q}\left(
0^{+}\right) }>\frac{1}{3}
\end{equation*}%
holds, make use of Lemma \ref{Lemma Yang} it is deduced that the inequality%
\begin{equation*}
\frac{S_{3q-8/5}\left( x\right) -S_{3q-8/5}\left( 0^{+}\right) }{C_{q}\left(
x\right) -C_{q}\left( 0^{+}\right) }>\frac{1}{3}
\end{equation*}%
holds for all $x\in \left( 0,\pi /2\right) $, that is, $S_{3q-8/5}\left(
x\right) /C_{q}\left( x\right) >1/3$ is valid for $x\in \left( 0,\pi
/2\right) $ in the case of $q\in \lbrack p_{0},1)$. Thus the sufficiency
follows from Lemma \ref{Lemma Yang}.

This completes the proof of this theorem.
\end{proof}

We close this section by giving a very nice chain of inequalities for
trigonometric functions. Taking $q=-\infty
,0,8/15,7/10,4/5,13/15,34/35;q_{0},1,6/5$, we deduce that

\begin{corollary}
For $x\in (0,\pi /2)$, the chain of inequalities holds:%
\begin{eqnarray*}
\cos ^{1/3}x &<&\cdot \cdot \cdot <\left( 1-\frac{8}{15}\ln \left( \cos
x\right) \right) ^{-5/8}<\left( \frac{8}{3}-\frac{5}{3}\cos ^{1/5}x\right)
^{-1} \\
&<&\exp \left( \frac{5}{8}\cos ^{8/15}x-\frac{5}{8}\right) <\left( \frac{5}{%
21}\cos ^{7/10}x+\frac{16}{21}\right) ^{2} \\
&<&\left( \frac{1}{3}\cos ^{4/5}x+\frac{2}{3}\right) ^{5/4}<\frac{5}{13}\cos
^{13/15}x+\frac{8}{13} \\
&<&\left( \tfrac{23}{51}\cos ^{34/35}x+\tfrac{28}{51}\right) ^{35/46}<\frac{%
\sin x}{x}<\left( \frac{7}{15}\cos x+\frac{8}{15}\right) ^{5/7} \\
&<&\sqrt{\frac{5}{9}\cos ^{6/5}x+\frac{4}{9}}<\frac{2+\cos x}{3}.
\end{eqnarray*}
\end{corollary}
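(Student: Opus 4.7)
My plan is to read the chain directly off Theorem \ref{MT5 p=3q-8/5b}, combined with the monotonicity of the one-parameter family $q\mapsto M(\cos x;3q-8/5,q)$ established in Lemma \ref{Lemma Mincreasing}. The interior entries of the chain are precisely the values of this family at the specific choices $q=0,\,1/5,\,8/15,\,7/10,\,4/5,\,13/15,\,34/35,\,1,\,6/5$; Lemma \ref{Lemma Mincreasing} guarantees they come in strictly increasing order, while Theorem \ref{MT5 p=3q-8/5b} places those with $q\leq 34/35$ below $(\sin x)/x$ and those with $q\geq q_{0}\approx 0.989681$ (in particular $q=1$ and $q=6/5$) above $(\sin x)/x$.

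First, I would invoke Lemma \ref{Lemma Mincreasing}: the map $q\mapsto M(\cos x;3q-8/5,q)$ is strictly increasing on $\mathbb{R}$ and satisfies $\lim_{q\to-\infty}M(\cos x;3q-8/5,q)=\cos^{1/3}x$. This yields the leftmost entry $\cos^{1/3}x$ as a strict lower bound for every finite-$q$ entry (the ``$\cdots$'' absorbing any additional values of $q<0$ one might insert), as well as the strict ordering among all subsequent $M$-entries. Next, I would evaluate formula (\ref{Mp=3q=8/5}) at each listed $q$ to produce the explicit expressions shown in the chain: for $q\neq 0,8/15$ one reads off $15q-8$, $8/(15q)$, and $5/(15q-8)$; for example $q=1/5$ gives $(8/3-(5/3)\cos^{1/5}x)^{-1}$ and $q=7/10$ gives $((5/21)\cos^{7/10}x+16/21)^{2}$. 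The special cases $q=0$ and $q=8/15$ use the second and third branches of (\ref{Mp=3q=8/5}). These are routine substitutions that I will not belabour. Then Theorem \ref{MT5 p=3q-8/5b} places $(\sin x)/x$ in the chain: everything with $q\leq 34/35$ is a strict lower bound, everything with $q\geq q_{0}$ is a strict upper bound, and because the $M$-chain is strictly increasing in $q$, $(\sin x)/x$ fits precisely in the unique gap between the $q=34/35$ entry and the $q=1$ entry.

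The one step that does \emph{not} follow from the machinery of this paper is the final link $\sqrt{(5/9)\cos^{6/5}x+4/9}<(2+\cos x)/3$, because $(2+\cos x)/3=M(\cos x;1,1)$ is not of the form $M(\cos x;3q-8/5,q)$ for any $q$. This is the main, though elementary, obstacle. I would dispatch it by a direct reduction: letting $u=\cos x\in(0,1]$ and squaring, the inequality becomes $5u^{1/5}\leq 4+u$; writing $v=u^{1/5}\in(0,1]$, this is $v^{5}-5v+4\geq 0$ on $(0,1]$, which follows because the derivative $5(v^{4}-1)$ is non-positive on $(0,1]$ and the expression vanishes at $v=1$, so the minimum is attained at the right endpoint.
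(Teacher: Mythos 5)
Your proposal is correct and follows essentially the same route as the paper: the interior links come from Lemma \ref{Lemma Mincreasing} (monotonicity of $q\mapsto M(\cos x;3q-8/5,q)$, with the $q\to-\infty$ limit giving $\cos^{1/3}x$) together with Theorem \ref{MT5 p=3q-8/5b}, and you correctly isolate the final link $\sqrt{\tfrac{5}{9}\cos^{6/5}x+\tfrac{4}{9}}<\tfrac{2+\cos x}{3}$ as the only step outside that machinery. The paper disposes of that last link by the explicit factorization $5t^{6}+4-(2+t^{5})^{2}=-t^{5}(t-1)^{2}(t^{3}+2t^{2}+3t+4)$ with $t=\cos^{1/5}x$, whereas you use an equivalent monotonicity argument on $v^{5}-5v+4$; both are valid, and strictness in your version follows since $\cos x<1$ on $(0,\pi/2)$.
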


\begin{proof}
Employing Theorem \ref{MT5 p=3q-8/5b} and increasing property of $M\left(
\cos x;3q-8/5,q\right) $ with respect to $q$, we get the all inequalities
except for the last one. In order for the last one to be valid, it suffices
that the inequality%
\begin{equation*}
5\cos ^{6/5}x+4<\left( 2+\cos x\right) ^{2}
\end{equation*}%
holds for $x\in (0,\pi /2)$. With $\cos ^{1/5}x=t$, then we get%
\begin{equation*}
5t^{6}+4-\left( 2+t^{5}\right) ^{2}=-t^{5}\left( t-1\right) ^{2}\left(
t^{3}+2t^{2}+3t+4\right) <0,
\end{equation*}%
which proves the desired inequality.
\end{proof}

\section{Applications}

As demonstrated by Zhu in \cite{Zhu.CMA.58(2009)}, an inequality for
trigonometric functions can be changed into another one by making identical
transformations or changes of variables. Based on our results in previous
sections, we can obtain corresponding inequalities in the same way. For
example, multiplying the each sides in double inequality (\ref{M1p>0}) by $%
\left( \left( \sin x\right) /x\right) ^{-q}$ yields a new Huygens type one:%
\begin{equation*}
\left( \tfrac{2}{\pi }\right) ^{p}(\tfrac{x}{\sin x})^{q}+(1-\left( \tfrac{2%
}{\pi }\right) ^{p})\left( \tfrac{x}{\tan x}\right) ^{q}<\left( \tfrac{\sin x%
}{x}\right) ^{p-q}<(1-\tfrac{p}{3q})(\tfrac{x}{\sin x})^{q}+\tfrac{p}{3q}%
\left( \tfrac{x}{\tan x}\right) ^{q}
\end{equation*}%
if $q\geq 1$ and $0<p\leq 3q-8/5$. And then, Theorem \ref{MT1 general} can
be restated in Huygens type.

Next we will give some other applications.

\subsection{Shafer-Fink type and Carlson type inequalities}

In \cite[3, p. 247, 3.4.31]{Mitrinovic.AI.1970}, it was listed that the
inequality%
\begin{equation*}
\arcsin x>\frac{6\left( \sqrt{x+1}-\sqrt{1-x}\right) }{4+\sqrt{x+1}+\sqrt{1-x%
}}>\dfrac{3x}{2+\sqrt{1-x^{2}}}
\end{equation*}%
hold for $x\in \left( 0,1\right) $, which is due to Shafer \cite%
{Shafer.AMM.74.6.1967}. Fink \cite{Fink.UBPEF.6.1995} proved that the double
inequality%
\begin{equation*}
\tfrac{3x}{2+\sqrt{1-x^{2}}}\leq \arcsin x\leq \tfrac{\pi x}{2+\sqrt{1-x^{2}}%
}
\end{equation*}%
is true for $x\in \left[ 0,1\right] $. There has some improvements,
generalizations of Shafer-Fink inequality (see \cite{Zhu.MIA.8.4.2005}, \cite%
{Malesevic.JIA.2007.78691}, \cite{Malesevic.MIA.10.3.2007}, \cite%
{Zhu.JIA.2007.67430}, \cite{Guo.Filomat.27.2.2013}, \cite%
{Yang.AAA.2014.364076}).

Carlson \cite[(1.14)]{Carlson.PAMS.25.1970} inequality states that the
double inequality%
\begin{equation}
\frac{6\left( 1-t\right) ^{1/2}}{2\sqrt{2}+\left( 1+t\right) ^{1/2}}<\arccos
t<\frac{2^{2/3}\left( 1-t\right) ^{1/2}}{\left( 1+t\right) ^{1/6}}
\label{Carlson}
\end{equation}%
holds for $t\in \left( 0,1\right) $. As a corollary of Theorem Zhu given in 
\cite[Theorem 5]{Zhu.CMA.58(2009)}, he gave a generalization of (\ref%
{Carlson}).

Shafer-Fink type and Carlson inequalities are essentially attributed to ones
involving the functions $\left( \sin x\right) /x$ and $\cos x$, where $x\in
\left( 0,\pi /2\right) $. Therefore, after making a change of variable $\sin
x=t$ or $\cos x=t$, an inequality for trigonometric functions may be changed
into a Shafer-Fink type or Carlson type one. For example, by letting $\sin
x=t$, the double inequality (\ref{N<sinx/x<M}) can be changed into%
\begin{equation}
\frac{t}{M\left( \sqrt{1-t^{2}};p,q\right) }<\arcsin t<\frac{t}{N\left( 
\sqrt{1-t^{2}};p,q\right) }  \label{S-F1}
\end{equation}%
if $q\geq 1$ and $p\leq 3q-8/5$ and $\left( p,q\right) \in E_{p,q}$, where $%
M $ and $N$ are defined by (\ref{M}) and (\ref{N}), respectively. Thus
Theorem \ref{MT1 general} can be restated as follows.

\begin{proposition}[Shafer-Fink type inequalities]
\label{P S-F1}Let $t\in (0,1)$ and $\left( p,q\right) \in E_{p,q}$.

(i) If $q\geq 1$ and $p\leq 3q-8/5$, then the doeble inequality (\ref{S-F1})
holds.

(ii) If $34/35<q\leq 1$ and $p\geq \pi ^{2}/4-1$, then the double
inequalities (\ref{S-F1}) is reversed.

(iii) If $0<q\leq 34/35$ and $p\geq 3q-8/5$, then all the double
inequalities (\ref{S-F1}) is reversed.

(iv) If $q\leq 0$ and $p\geq 3q-8/5$, then the inequality%
\begin{equation*}
\arcsin t<\frac{t}{M\left( \sqrt{1-t^{2}};p,q\right) }
\end{equation*}%
holds.
\end{proposition}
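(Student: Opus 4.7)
The plan is to derive (\ref{S-F1}) by a direct change of variables in Theorem \ref{MT1 general}, substituting $t = \sin x$. This gives a bijection between $x \in (0, \pi/2)$ and $t \in (0, 1)$ under which $\cos x = \sqrt{1-t^{2}}$ and $(\sin x)/x = t/\arcsin t$, so the trigonometric inequalities translate directly into Shafer--Fink type inequalities for $\arcsin$.

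For part (i), I would first invoke Remark \ref{Remark T-M-N} to unify the three cases (\ref{M1p>0}), (\ref{M1p=0}), (\ref{M1p<0}) of Theorem \ref{MT1 general}(i) into the single double inequality
\[
N(\cos x; p, q) < \frac{\sin x}{x} < M(\cos x; p, q),
\]
valid for $(p,q) \in E_{p,q}$ with $q \geq 1$ and $p \leq 3q - 8/5$. Substituting $t = \sin x$ transforms this into
\[
N(\sqrt{1-t^{2}}; p, q) < \frac{t}{\arcsin t} < M(\sqrt{1-t^{2}}; p, q)
\]
for $t \in (0,1)$. Since $N$ is a weighted power mean (or the geometric mean when $p = 0$) of the positive numbers $1$ and $2/\pi$, it is positive; for $M$ with $(p,q) \in E_{p,q}$, one checks from the piecewise formula (\ref{M}) that the base inside the outer exponent is positive, so $M > 0$ as well. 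Taking reciprocals and multiplying by $t > 0$ then reverses the chain and yields (\ref{S-F1}).

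Parts (ii)--(iv) follow analogously from the same substitution applied to the correspondingly reversed or one-sided inequalities in Theorem \ref{MT1 general}(ii)--(iv); in case (iv) only the $M$-bound survives because $N(\cdot; p, q)$ is set up for $q > 0$ in Remark \ref{Remark T-M-N}. Since the argument is purely a reformulation, there is no real obstacle. The only care needed is bookkeeping: matching each parameter regime in Theorem \ref{MT1 general} with its counterpart in Proposition \ref{P S-F1}, and handling the degenerate values $p = 0$ or $q = 0$ via the explicit piecewise formulas for $M$ and $N$ so that the reciprocation step remains valid.
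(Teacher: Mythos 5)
Your proposal is correct and is essentially the paper's own argument: the paper likewise obtains Proposition \ref{P S-F1} by substituting $\sin x=t$ into the unified inequality (\ref{N<sinx/x<M}) from Remark \ref{Remark T-M-N} and reading off each parameter regime from Theorem \ref{MT1 general}, offering no further proof. Your added care about positivity of $M$ and $N$ before reciprocating, and the observation that the $N$-bound disappears when $q\leq 0$, are exactly the (implicit) bookkeeping the paper relies on.
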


Clearly, Proposition \ref{P S-F1} is also true if replacing $\arcsin t$, $t$
and $\sqrt{1-t^{2}}$ with $\arccos t$, $\sqrt{1-t^{2}}$ and $t$,
respectively.

Next we give more refined Shafer-Fink type and Carlson type inequalities.
Employing the monotonicity of $x\mapsto T_{p,q}\left( x/2\right) $ on $%
\left( 0,\pi /2\right) $ given in Proposition \ref{P main1}, we get for $%
pq\neq 0$,%
\begin{equation*}
\frac{1}{3}<T_{p,q}\left( \tfrac{x}{2}\right) =\frac{S_{p}\left( x/2\right) 
}{C_{q}\left( x/2\right) }<\frac{S_{p}\left( \pi /4\right) }{C_{q}\left( \pi
/4\right) }=T_{p,q}\left( \tfrac{\pi }{4}\right)
\end{equation*}%
if $q\geq 1$ and $p\leq 3q-8/5$, that is,%
\begin{eqnarray*}
\frac{1}{3} &<&\frac{q}{p}\frac{1-\left( \frac{2\sin \left( x/2\right) }{x}%
\right) ^{p}}{1-\cos ^{q}\left( x/2\right) }<\frac{q}{p}\frac{1-\left( 2%
\sqrt{2}/\pi \right) ^{p}}{1-2^{-q/2}}=\frac{1}{c_{p,q}}\text{ if }p\neq 0,
\\
\frac{1}{3} &<&q\frac{-\ln \frac{2\sin \left( x/2\right) }{x}}{1-\cos
^{q}\left( x/2\right) }<q\frac{-\ln \left( 2\sqrt{2}/\pi \right) }{1-2^{-q/2}%
}=\frac{1}{c_{0,q}}\text{ if }p=0.
\end{eqnarray*}%
Also, it is easy to check that the largest set of $\{\left( p,q\right) \}$
over the real numbers field such that both the inequalities%
\begin{eqnarray*}
1-\frac{p}{qc_{p,q}}+\frac{p}{qc_{p,q}}\cos ^{q}\left( x/2\right) &>&0\text{
if }q\neq 0, \\
1+\frac{p}{c_{p,0}}\ln (\cos \tfrac{x}{2}) &>&0\text{ if }q=0
\end{eqnarray*}%
hold for all $x\in \left( 0,\pi /2\right) $ is $\mathbb{R}$. While the
largest set of $\{\left( p,q\right) \}$ such that both the inequalities%
\begin{eqnarray*}
1-\frac{p}{3q}+\frac{p}{3q}\cos ^{q}\left( x/2\right) &>&0\text{ if }q\neq 0,
\\
1+\frac{p}{3}\ln (\cos \tfrac{x}{2}) &>&0\text{ if }q=0
\end{eqnarray*}%
hold for all $x\in \left( 0,\pi /2\right) $ is $\{\left( p,q\right)
:p<3q/\left( 1-2^{-q/2}\right) $ if $q\neq 0$ and $p<6/\ln 2$ if $q=0\}$.

Now let $x=\arcsin t$. Then 
\begin{equation*}
\sin \frac{x}{2}=\frac{\sqrt{1+t}-\sqrt{1-t}}{2}\text{, \ }\cos \frac{x}{2}=%
\frac{\sqrt{1+t}+\sqrt{1-t}}{2}.
\end{equation*}%
If let $x=\arccos t$. Then 
\begin{equation*}
\sin \frac{x}{2}=\frac{\sqrt{1-t}}{\sqrt{2}}\text{, \ }\cos \frac{x}{2}=%
\frac{\sqrt{1+t}}{\sqrt{2}}.
\end{equation*}%
And then, by Proposition \ref{P main1} we get respectively

\begin{proposition}[Shafer-Fink type inequalities]
Let $t\in (0,1)$ and $\left( p,q\right) \in \{\left( p,q\right) :p<3q/\left(
1-2^{-q/2}\right) $ if $q\neq 0$ and $p<6/\ln 2$ if $q=0\}$. Then

(i) when $q\geq 1$ and $p\leq 3q-8/5$, the double inequalities%
\begin{equation}
\tfrac{\left( \sqrt{1+t}-\sqrt{1-t}\right) ^{p}}{1-\frac{p}{3q}+\frac{p}{3q}%
2^{-q}\left( \sqrt{1+t}+\sqrt{1-t}\right) ^{q}}<\arcsin ^{p}t<\tfrac{\left( 
\sqrt{1+t}-\sqrt{1-t}\right) ^{p}}{1-\frac{p}{qc_{p,q}}+\frac{p}{qc_{p,q}}%
2^{-q}\left( \sqrt{1+t}+\sqrt{1-t}\right) ^{q}}\text{ if }p>0,
\label{S-F2p>0}
\end{equation}%
\begin{eqnarray}
&&\left( \sqrt{1+t}-\sqrt{1-t}\right) \exp \tfrac{1-2^{-q}\left( \sqrt{1+t}+%
\sqrt{1-t}\right) ^{q}}{3q}  \label{S-F2p=0} \\
&<&\arcsin t<\left( \sqrt{1+t}-\sqrt{1-t}\right) \exp \tfrac{1-2^{-q}\left( 
\sqrt{1+t}+\sqrt{1-t}\right) ^{q}}{qc_{0,q}^{-1}}\text{ if }p=0,  \notag
\end{eqnarray}%
\begin{equation}
\tfrac{\left( \sqrt{1+t}-\sqrt{1-t}\right) ^{p}}{1-\frac{p}{qc_{p,q}}+\frac{p%
}{qc_{p,q}}2^{-q}\left( \sqrt{1+t}+\sqrt{1-t}\right) ^{q}}<\arcsin ^{p}t<%
\tfrac{\left( \sqrt{1+t}-\sqrt{1-t}\right) ^{p}}{1-\frac{p}{3q}+\frac{p}{3q}%
2^{-q}\left( \sqrt{1+t}+\sqrt{1-t}\right) ^{q}}\text{ if }p<0
\label{S-F2p<0}
\end{equation}%
hold, where $3$ and%
\begin{equation}
c_{p,q}=\left\{ 
\begin{array}{ll}
\frac{p}{q}\frac{1-2^{-q/2}}{1-\left( 2\sqrt{2}/\pi \right) ^{p}} & \text{if 
}pq\neq 0, \\ 
\frac{1}{q}\frac{1-2^{-q/2}}{\ln \left( \pi /2\sqrt{2}\right) } & \text{if }%
p=0,q\neq 0, \\ 
\frac{p}{2}\frac{\ln 2}{1-\left( 2\sqrt{2}/\pi \right) ^{p}} & \text{if }%
p\neq 0,q=0, \\ 
\frac{1}{2}\frac{\ln 2}{\ln \left( \pi /2\sqrt{2}\right) } & \text{if }p=q=0%
\end{array}%
\right.  \label{c_p,q}
\end{equation}%
are the best constants;

(ii) when $34/35<q\leq 1$ and $\pi ^{2}/4-1\leq p<3q/\left(
1-2^{-q/2}\right) $, the double inequality (\ref{S-F2p>0}) is reversed;

(iv) when $q\leq 34/35$ and $3q-8/5\leq p<3q/\left( 1-2^{-q/2}\right) $, all
the double inequalities (\ref{S-F2p>0}), (\ref{S-F2p=0}) and (\ref{S-F2p<0})
are reversed.
\end{proposition}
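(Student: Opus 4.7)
The plan is to deduce this proposition directly from Proposition~\ref{P main1} applied to the rescaled variable $x/2$. Since Proposition~\ref{P main1} establishes monotonicity of $T_{p,q}$ on all of $(0,\pi/2)$, the composite map $x\mapsto T_{p,q}(x/2)$ is monotone on $(0,\pi/2)$ with the sharp endpoint values
\[
T_{p,q}(0^+)=\tfrac{1}{3},\qquad T_{p,q}\!\left(\tfrac{\pi}{4}\right)=\tfrac{q}{p}\cdot\tfrac{1-(2\sqrt{2}/\pi)^{p}}{1-2^{-q/2}}=\tfrac{1}{c_{p,q}},
\]
with the natural limiting interpretations when $pq=0$ (these are exactly the four cases in the displayed formula for $c_{p,q}$). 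Under hypothesis (i), parts (i) of Proposition~\ref{P main1} forces strict inequalities $1/3<T_{p,q}(x/2)<1/c_{p,q}$ for every $x\in(0,\pi/2)$; the reverse inequalities in (ii) and (iv) are read off from parts (iii) and (iv) of the same proposition.

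To reach the displayed forms, I would unfold $T_{p,q}(x/2)=S_{p}(x/2)/C_{q}(x/2)$ and clear the positive denominator $C_{q}(x/2)$, obtaining an inequality of shape
\[
\left(\frac{2\sin(x/2)}{x}\right)^{\!p}\ \lessgtr\ 1-\beta+\beta\cos^{q}\!\tfrac{x}{2},\qquad \beta\in\bigl\{\tfrac{p}{3q},\tfrac{p}{qc_{p,q}}\bigr\},
\]
where the direction of the inequality flips when $p<0$, producing the split between~\eqref{S-F2p>0} and~\eqref{S-F2p<0}, and the case $p=0$ is the $p\to 0$ limit giving the exponential form~\eqref{S-F2p=0}. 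Substituting $x=\arcsin t$ together with the half-angle identities $\sin(x/2)=(\sqrt{1+t}-\sqrt{1-t})/2$ and $\cos(x/2)=(\sqrt{1+t}+\sqrt{1-t})/2$ then yields exactly the stated Shafer--Fink bounds. Sharpness of the constants $3$ and $c_{p,q}$ is immediate: any improvement of either would contradict one of the sharp limits $T_{p,q}(x/2)\to 1/3$ as $x\to 0^{+}$ or $T_{p,q}(x/2)\to 1/c_{p,q}$ as $x\to(\pi/2)^{-}$.

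The delicate technical point, and the one I expect to require the most care, is the positivity of $1-\beta+\beta\cos^{q}(x/2)$ that is needed before one can isolate $\arcsin^{p}t$ by division. Because $\cos(x/2)\in(1/\sqrt{2},1)$ on $(0,\pi/2)$, the worst case arises as $\cos(x/2)\to 1/\sqrt{2}$; for $\beta=p/(3q)$ this forces the threshold $p<3q/(1-2^{-q/2})$ when $q\neq 0$, and the analogous logarithmic limit $p<6/\ln 2$ when $q=0$, which is precisely the hypothesis placed on $(p,q)$. The parallel positivity with $c_{p,q}$ in place of $3q$ is automatic on all of $\mathbb{R}^{2}$, as already noted in the paragraph preceding the proposition. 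Together these verifications justify the division step and complete the proof.
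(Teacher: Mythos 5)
Your proposal matches the paper's own derivation essentially step for step: the paper likewise applies Proposition \ref{P main1} to $T_{p,q}(x/2)$, bounds it between $T_{p,q}(0^{+})=1/3$ and $T_{p,q}(\pi/4)=1/c_{p,q}$, substitutes $x=\arcsin t$ with the same half-angle identities, and obtains the admissible set $\{p<3q/(1-2^{-q/2})\ \text{if}\ q\neq 0,\ p<6/\ln 2\ \text{if}\ q=0\}$ from exactly the positivity requirement on $1-\beta+\beta\cos^{q}(x/2)$ that you isolate. The argument is correct and no different in substance from the paper's.
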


\begin{proposition}[Carlson type inequalities]
Let $t\in (0,1)$ and $\left( p,q\right) \in \{\left( p,q\right) :p<3q/\left(
1-2^{-q/2}\right) $ if $q\neq 0$ and $p<6/\ln 2$ if $q=0\}$. Then

(i) when $q\geq 1$ and $p\leq 3q-8/5$, the double inequalities%
\begin{equation}
\tfrac{\left( 2\sqrt{1-t}\right) ^{p}}{1-\frac{p}{3q}+\frac{p}{3q}%
2^{-q/2}\left( \sqrt{1+t}\right) ^{q}}<\arccos ^{p}t<\tfrac{\left( 2\sqrt{1-t%
}\right) ^{p}}{1-\frac{p}{qc_{p,q}}+\frac{p}{qc_{p,q}}2^{-q/2}\left( \sqrt{%
1+t}\right) ^{q}}\text{ if }p>0,  \label{CTp>0}
\end{equation}%
\begin{eqnarray}
&&\sqrt{2}\sqrt{1-t}\exp \tfrac{1-2^{-q/2}\left( \sqrt{1+t}\right) ^{q}}{3q}
\label{CTp=0} \\
&<&\arccos t<\sqrt{2}\sqrt{1-t}\exp \tfrac{1-2^{-q/2}\left( \sqrt{1+t}%
\right) ^{q}}{qc_{0,q}^{-1}}\text{ if }p=0,  \notag
\end{eqnarray}%
\begin{equation}
\tfrac{\left( 2\sqrt{1-t}\right) ^{p}}{1-\frac{p}{qc_{p,q}}+\frac{p}{qc_{p,q}%
}2^{-q/2}\left( \sqrt{1+t}\right) ^{q}}<\arccos ^{p}t<\tfrac{\left( 2\sqrt{%
1-t}\right) ^{p}}{1-\frac{p}{3q}+\frac{p}{3q}2^{-q/2}\left( \sqrt{1+t}%
\right) ^{q}}\text{ if }p<0  \label{CTp<0}
\end{equation}%
hold, where $3$ and $c_{p,q}$ defined by (\ref{c_p,q}) are the best
constants;

(ii) when $34/35<q\leq 1$ and $\pi ^{2}/4-1\leq p<3q/\left(
1-2^{-q/2}\right) $, the double inequality (\ref{CTp>0}) is reversed;

(iv) when $q\leq 34/35$ and $3q-8/5\leq p<3q/\left( 1-2^{-q/2}\right) $, all
the double inequalities (\ref{CTp>0}), (\ref{CTp=0}) and (\ref{CTp<0}) are
reversed.
\end{proposition}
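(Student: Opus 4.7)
The plan is to apply Proposition \ref{P main1} to the shifted argument $y=x/2$, where $x=\arccos t$. Since $t\in(0,1)$ forces $x\in(0,\pi/2)$ and $y\in(0,\pi/4)$, the monotonicity of $T_{p,q}$ on $(0,\pi/2)$ supplied by that proposition applies to $T_{p,q}(x/2)$ without modification. The half-angle identities give $\sin(x/2)=\sqrt{(1-t)/2}$ and $\cos(x/2)=\sqrt{(1+t)/2}$, so $2\sin(x/2)/x=\sqrt{2}\sqrt{1-t}/\arccos t$ and $\cos^{q}(x/2)=2^{-q/2}(\sqrt{1+t})^{q}$, which are exactly the quantities appearing in the statement; in effect $2\sin(x/2)/x$ plays with respect to $\arccos t$ the role that $(\sin x)/x$ plays with respect to $x$ in Theorem \ref{MT1 general}.

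For part (i), with $q\geq 1$ and $p\leq 3q-8/5$, Proposition \ref{P main1}(i) yields $T_{p,q}$ strictly increasing on $(0,\pi/4)$. Evaluating $S_{p}(\pi/4)=(1-(2\sqrt{2}/\pi)^{p})/p$ and $C_{q}(\pi/4)=(1-2^{-q/2})/q$ (with the appropriate limits when $p=0$ or $q=0$) identifies $T_{p,q}(\pi/4)=1/c_{p,q}$, so the chain $1/3<T_{p,q}(x/2)<1/c_{p,q}$ holds on $(0,\pi/2)$. Using Lemma \ref{Lemma u_p} to clear the positive denominator $C_{q}(x/2)$ and treating the three regimes $p>0$, $p=0$, $p<0$ separately for the sign flip induced by dividing by $p$, I obtain a Cusa-type double inequality bounding $(2\sin(x/2)/x)^{p}$ (or $-\ln(2\sin(x/2)/x)$ at $p=0$) below by $1-\frac{p}{qc_{p,q}}+\frac{p}{qc_{p,q}}\cos^{q}(x/2)$ and above by $1-\frac{p}{3q}+\frac{p}{3q}\cos^{q}(x/2)$.

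To convert these into the Carlson form I take $p$-th roots (exponentials when $p=0$, with reversal of direction when $p<0$) and use the identity $\arccos t = 2\sin(x/2)/(2\sin(x/2)/x)$, then raise back to the $p$-th power. This manipulation is legitimate only while both intervening bounds remain positive on $(0,\pi/2)$. A direct substitution of $c_{p,q}$ shows the expression $1-\frac{p}{qc_{p,q}}+\frac{p}{qc_{p,q}}\cos^{q}(x/2)$ attains its extremum at $x=\pi/2$ with value $(2\sqrt{2}/\pi)^{p}>0$ automatically, so the only non-trivial positivity requirement comes from $1-\frac{p}{3q}+\frac{p}{3q}\cos^{q}(x/2)$, whose minimum over $(0,\pi/2)$ equals $1-\frac{p}{3q}(1-2^{-q/2})$; this is positive precisely when $p<3q/(1-2^{-q/2})$ (and $p<6/\ln 2$ in the limit $q=0$), recovering the admissibility hypothesis of the proposition. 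Substituting $2\sin(x/2)=\sqrt{2}\sqrt{1-t}$ and $\cos^{q}(x/2)=2^{-q/2}(\sqrt{1+t})^{q}$ then produces (\ref{CTp>0}), (\ref{CTp=0}) and (\ref{CTp<0}) directly. Sharpness of the constants $3$ and $c_{p,q}$ follows from the identifications $T_{p,q}(0^{+})=1/3$ and $T_{p,q}(\pi/4)=1/c_{p,q}$, since the endpoint values of the monotone image cannot be improved uniformly in $x$.

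Parts (ii) and (iv) are handled identically, invoking parts (ii)--(iii) and (iv) of Proposition \ref{P main1} respectively in place of part (i): in each regime $T_{p,q}$ is monotone on $(0,\pi/4)$ in the reversed direction, so the chain $1/3<T_{p,q}(x/2)<1/c_{p,q}$ reverses and all derived inequalities reverse in lockstep without any change in the positivity analysis. The principal technical nuisance will be threading the sign conventions through the cases $p>0$, $p=0$, $p<0$, in particular keeping track of the direction flip under the reciprocation implicit in passing from bounds on $2\sin(x/2)/x$ to bounds on $\arccos t$; once this bookkeeping is in place, the three cases fall out in parallel.
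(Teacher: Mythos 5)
Your proposal is correct and follows essentially the same route as the paper: the author likewise applies Proposition \ref{P main1} to $T_{p,q}(x/2)$ with $x=\arccos t$, identifies $T_{p,q}(\pi/4)=1/c_{p,q}$ via $S_p(\pi/4)=(1-(2\sqrt{2}/\pi)^p)/p$ and $C_q(\pi/4)=(1-2^{-q/2})/q$, determines the admissible set $\{p<3q/(1-2^{-q/2})\text{ if }q\neq 0,\ p<6/\ln 2\text{ if }q=0\}$ from exactly the positivity requirement you describe, and then substitutes the half-angle values $\sin(x/2)=\sqrt{(1-t)/2}$, $\cos(x/2)=\sqrt{(1+t)/2}$. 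No gaps worth noting.
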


\subsection{Inequalities for means}

Let $G,A,Q,P,T$ and $U$ stand for the geometric, arithmetic, quadratic, the
first Seiffert \cite{Seiffert.EM.42.1987}, the second Seiffert \cite%
{Seiffert.DW.29.1995} and Yang's means \cite{Yang.JIA.2013.541} of distinct
positive numbers $a$ and $b$ defined by%
\begin{eqnarray*}
G &=&G\left( a,b\right) =\sqrt{ab}\text{, \ }A=A\left( a,b\right) =\frac{a+b%
}{2}\text{, \ }Q=Q\left( a,b\right) =\sqrt{\frac{a^{2}+b^{2}}{2}}, \\
P &=&P\left( a,b\right) =\frac{a-b}{2\arcsin \frac{a-b}{a+b}},T=T\left(
a,b\right) =\frac{a-b}{2\arctan \frac{a-b}{a+b}},U=U\left( a,b\right) =\frac{%
a-b}{\sqrt{2}\arcsin \frac{a-b}{\sqrt{2ab}}},
\end{eqnarray*}%
respectively. The Schwab-Borchardt mean of two numbers $a\geq 0$ and $b>0$,
denoted by $SB(a,b)$, is defined as \cite[Theorem 8.4]{Borwein.PiAGM.1987}, 
\cite[(2.3)]{Brenner.123.1987}, \cite{Neuman.14(2003)}%
\begin{equation*}
SB(a,b)=\left\{ 
\begin{array}{cc}
\frac{\sqrt{b^{2}-a^{2}}}{\arccos (a/b)} & \text{if\ }a<b, \\ 
a & \text{if \ }a=b, \\ 
\frac{\sqrt{a^{2}-b^{2}}}{\func{arccosh}(a/b)} & \text{if \ }a>b.%
\end{array}%
\right.
\end{equation*}%
Very recently, Yang \cite[Theorem 3.1]{Yang.JIA.2013.541} has defined a
family of two-parameter trigonometric sine means as follows.

\begin{definition}
Let $b\geq a>0$ with and $p,q\in \lbrack -2,2]$ such that $0\leq p+q\leq 3$,
and let $\tilde{S}\left( p,q,t\right) $ be defined by%
\begin{equation}
\tilde{S}\left( p,q,t\right) =\left\{ 
\begin{array}{ll}
\left( \dfrac{q}{p}\dfrac{\sin pt}{\sin qt}\right) ^{1/\left( p-q\right) } & 
\text{if }pq\left( p-q\right) \neq 0, \\ 
\left( \dfrac{\sin pt}{pt}\right) ^{1/p} & \text{if }q=0,p\neq 0, \\ 
\left( \dfrac{\sin qt}{qt}\right) ^{1/q} & \text{if }p=0,q\neq 0, \\ 
e^{t\cot pt-1/p} & \text{if }p=q\neq 0, \\ 
1 & \text{if }p=q=0.%
\end{array}%
\right.  \label{S(p,q,t)}
\end{equation}%
Then $\mathcal{S}_{p,q}\left( a,b\right) $ defined by 
\begin{equation}
\mathcal{S}_{p,q}\left( a,b\right) =b\times \tilde{S}\left( p,q,\arccos
\left( a/b\right) \right) \text{ if }a\neq b\text{\ and }\mathcal{S}%
_{p,q}\left( a,a\right) =a  \label{TSmean}
\end{equation}%
is called a two-parameter sine mean of $a$ and $b$.
\end{definition}

As a special case, for $b\geq a>0$, 
\begin{equation*}
\mathcal{S}_{1,0}\left( a,b\right) =b\frac{\sin t}{t}\Big |_{t=\arccos
\left( a/b\right) }
\end{equation*}%
is a mean of $a$ and $b$. Clearly, $\mathcal{S}_{1,0}\left( a,b\right)
=SB\left( a,b\right) $. Thus, after replacing $t$ by $\arccos \left(
a/b\right) $ and multiplying each sides of those inequalities showed in
previous section by $b$, they can be written as corresponding inequalities
for means. For example, Theorem \ref{MT4 p=kq sharp} can be restated as
follows.

\begin{proposition}
\label{PA p=kq sharp}Let $k\in \left( 0,3\right) $ and $b>a>0$. Then

(i) if $k\in \lbrack 7/5,3)$, then the inequality%
\begin{equation}
SB\left( a,b\right) <b^{1-1/k}\left( \left( 1-\frac{k}{3}\right) b^{q}+\frac{%
k}{3}a^{q}\right) ^{1/\left( kq\right) }  \label{SB<}
\end{equation}%
holds if and only if $q\geq 8/\left( 5\left( 3-k\right) \right) $;

(ii) if $k\in \left( p_{0},3\right) $, then the inequality%
\begin{equation}
b^{1-1/k}\left( \left( 1-\frac{k}{3}\right) b^{q}+\frac{k}{3}a^{q}\right)
^{1/\left( kq\right) }<SB\left( a,b\right)  \label{SB>}
\end{equation}%
holds if and only if $q\leq \left( \ln \left( 1-k/3\right) \right) /\left(
k\ln \left( 2/\pi \right) \right) $, where $p_{0}\approx 1.42034$ is defined
by (\ref{p_0,p_0*});

(iii) if $k\in (0,23/17]$, then the inequality (\ref{SB>}) holds if and only
if $q\leq 8/\left( 5\left( 3-k\right) \right) $;

(iv) if $k\in \left( 0,p_{0}^{\ast }\right) $, then the inequality (\ref{SB<}%
) holds if and only if $q\geq \left( \ln \left( 1-k/3\right) \right) /\left(
k\ln \left( 2/\pi \right) \right) $, where $p_{0}^{\ast }$ $\approx 1.27754$
is defined by (\ref{p_0,p_0*}).
\end{proposition}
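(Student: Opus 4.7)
The plan is to reduce each of the mean inequalities to the corresponding trigonometric inequality in Theorem \ref{MT4 p=kq sharp} by the substitution $t=\arccos(a/b)$. Since $b>a>0$, this $t$ ranges over $(0,\pi/2)$, and we have $\cos t=a/b$ together with
\[
SB(a,b)=\frac{\sqrt{b^{2}-a^{2}}}{\arccos(a/b)}=b\,\frac{\sin t}{t}.
\]

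Factoring $b^{q}$ from the convex combination gives $(1-k/3)b^{q}+(k/3)a^{q}=b^{q}\bigl(1-k/3+(k/3)\cos^{q}t\bigr)$, and combining this with the prefactor via $b^{1-1/k}\cdot(b^{q})^{1/(kq)}=b$, inequality (\ref{SB<}) becomes equivalent to
\[
\frac{\sin t}{t}<\left(1-\frac{k}{3}+\frac{k}{3}\cos^{q}t\right)^{1/(kq)},
\]
which is exactly inequality (\ref{M4<}). The identical computation transforms (\ref{SB>}) into (\ref{M4>}). So the whole proposition is really just Theorem \ref{MT4 p=kq sharp} dressed up in the language of the Schwab--Borchardt mean.

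Both sides of (\ref{SB<}) and (\ref{SB>}) are homogeneous of degree one in $(a,b)$, so the validity of each inequality for all $b>a>0$ depends only on the ratio $a/b\in(0,1)$, which is in bijection with $t\in(0,\pi/2)$ via $t=\arccos(a/b)$. Consequently, the universal validity of each mean inequality over $\{b>a>0\}$ is equivalent to the universal validity of the associated trigonometric inequality over $(0,\pi/2)$, and both the sufficient and the necessary directions in parts (i)--(iv) of Theorem \ref{MT4 p=kq sharp} transfer unchanged, together with the sharp ranges of $q$. There is no substantive analytic obstacle here: the only point requiring any attention is the algebraic bookkeeping of the exponents in the reduction above, and once that is carried out the proof is immediate from Theorem \ref{MT4 p=kq sharp}.
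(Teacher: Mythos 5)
Your proposal is correct and is exactly the paper's route: the paper derives this proposition from Theorem \ref{MT4 p=kq sharp} by the same substitution $t=\arccos(a/b)$, using $SB(a,b)=b\,(\sin t)/t$ and the homogeneity/exponent bookkeeping $b^{1-1/k}(b^{q})^{1/(kq)}=b$ to identify (\ref{SB<}) and (\ref{SB>}) with (\ref{M4<}) and (\ref{M4>}). Nothing is missing; the equivalence of the ranges of validity transfers as you describe.
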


As another example, Theorem \ref{MT5 p=3q-8/5b} can be restated in the
following form.

\begin{proposition}
\label{PA p=3q-8/5 sharp}Let $b>a>0$. Then the inequality $SB\left(
a,b\right) >b\times M\left( a/b;3q-8/5,q\right) $ holds if and only if $%
q\leq 34/35$, while its reverse holds if and only if $q\geq q_{0}\approx
0.989681$, where $q_{0}$ is the unique root of equation $D_{3q-8/5,q}\left(
\pi /2^{-}\right) =0$ on $\mathbb{R}_{+}$, here $D_{3q-8/5,q}\left( \pi
/2^{-}\right) $ is defined by (\ref{D_1q-8/5,q}).
\end{proposition}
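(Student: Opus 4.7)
The plan is to reduce Proposition \ref{PA p=3q-8/5 sharp} to the previously established Theorem \ref{MT5 p=3q-8/5b} by the change of variables $x = \arccos(a/b)$. Since $b > a > 0$, we have $a/b \in (0,1)$, so $x \in (0, \pi/2)$; moreover, as $(a,b)$ ranges over the set $\{(a,b) : 0 < a < b\}$ (with scaling by $b$ absorbed as a free parameter), $x$ ranges over all of $(0, \pi/2)$. This is the crucial observation which ensures the ``if and only if'' statements transfer correctly.

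First, I would record the two elementary identities that follow from the substitution. Setting $x = \arccos(a/b)$ gives $\cos x = a/b$ and $\sin x = \sqrt{1 - (a/b)^2} = \sqrt{b^2 - a^2}/b$. Consequently, from the definition of the Schwab--Borchardt mean in the case $a < b$,
\[
\frac{SB(a,b)}{b} = \frac{\sqrt{b^2-a^2}}{b \arccos(a/b)} = \frac{\sin x}{x}.
\]
On the other side of the inequality, by the very definition of $M$ we have $M(a/b;\, 3q-8/5,\, q) = M(\cos x;\, 3q-8/5,\, q)$. Thus the inequality $SB(a,b) > b \cdot M(a/b;\, 3q-8/5,\, q)$ is term-for-term equivalent to
\[
\frac{\sin x}{x} > M(\cos x;\, 3q - 8/5,\, q) \quad \text{for } x \in (0, \pi/2),
\]
and analogously for the reversed inequality.

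Next, I would invoke Theorem \ref{MT5 p=3q-8/5b} directly. That theorem asserts precisely that the inequality $\sin x / x > M(\cos x;\, 3q-8/5,\, q)$ holds for all $x \in (0,\pi/2)$ if and only if $q \leq 34/35$, and the reverse holds for all $x \in (0,\pi/2)$ if and only if $q \geq q_0$, where $q_0 \approx 0.989681$ is the unique positive root of $D_{3q-8/5,q}(\pi/2^-) = 0$ described in Lemma \ref{Lemma E D_p.qs}. Since the correspondence $(a,b) \leftrightarrow (b, x)$ described above is a bijection between $\{(a,b): 0 < a < b\}$ and $(0,\infty) \times (0, \pi/2)$ (with $b$ merely acting as a homogeneity factor on both sides of the inequality), the quantifier ``for all $b > a > 0$'' in the proposition matches ``for all $x \in (0, \pi/2)$'' in the theorem. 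This yields both directions of both biconditionals in the proposition.

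There is no genuine obstacle here, since the work has already been done in Theorem \ref{MT5 p=3q-8/5b}; the main thing to be careful about is verifying that the range of $x = \arccos(a/b)$ really covers all of $(0, \pi/2)$ when $(a,b)$ ranges over $\{0 < a < b\}$, so that both necessity and sufficiency transfer without loss. The minor bookkeeping point is to check homogeneity: the inequality is invariant under $(a,b) \mapsto (\lambda a, \lambda b)$ for $\lambda > 0$, which ensures that nothing is lost by factoring $b$ out of both sides.
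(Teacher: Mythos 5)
Your proposal is correct and follows exactly the route the paper takes: the paper presents this proposition as a direct restatement of Theorem \ref{MT5 p=3q-8/5b} obtained by substituting $x=\arccos(a/b)$ and multiplying by $b$, which is precisely your reduction. Your additional remark that $x$ ranges over all of $(0,\pi/2)$ as $(a,b)$ ranges over $\{0<a<b\}$ is the (implicit) point that makes both directions of the biconditionals transfer, so the argument is complete.
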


Further, let $m=m(a,b)$ and $M=M(a,b)$ be two means of $a$ and $b$ with $%
m(a,b)<M(a,b)$ for all $a,b>0$. Clearly, making a change of variables $%
a\rightarrow m\left( a,b\right) $ and $b\rightarrow M(a,b)$, $\mathcal{S}%
_{p,q}\left( m,M\right) $ is still a mean of $a$ and $b$ which lie in $m$
and $M$. Particularly, taking $\left( m,M\right) =\left( G,A\right) $, $%
\left( A,Q\right) $, $\left( G,Q\right) $, respectively, we can obtain new
symmetric means as follows:%
\begin{eqnarray*}
\mathcal{S}_{1,0}\left( G,A\right) &=&SB\left( G,A\right) =\frac{a-b}{%
2\arcsin \frac{a-b}{a+b}}=P\left( a,b\right) , \\
\mathcal{S}_{1,0}\left( A,Q\right) &=&SB\left( A,Q\right) =\frac{a-b}{%
2\arctan \frac{a-b}{a+b}}=T\left( a,b\right) , \\
\mathcal{S}_{1,0}\left( G,Q\right) &=&SB\left( G,Q\right) =\frac{a-b}{\sqrt{2%
}\arcsin \frac{a-b}{\sqrt{2ab}}}=U\left( a,b\right) ,
\end{eqnarray*}%
where $P,T,U$ are the first Seiffert, the second Seiffert, Yang's means.

And then, after letting $t=\arccos \left( a/b\right) $ and multiplying each
sides of those inequalities showed in previous section by $b$, and replacing 
$\left( a,b,SB\left( a,b\right) \right) $ with $\left( G,A,P\right) $, $%
\left( A,Q,T\right) $, $\left( G,Q,U\right) $, we can establish
corresponding inequalities for symmetric means. For example, Theorem \ref%
{MT5 p=3q-8/5b} can be restated in the following form.

\begin{proposition}
Let $a,b>0$ with $a\neq b$. Then the inequality%
\begin{equation*}
P>\left\{ 
\begin{array}{ll}
A^{\left( 10q-8\right) /\left( 15q-8\right) }\left( \frac{8}{15q}%
A^{q}+\left( 1-\frac{8}{15q}\right) G^{q}\right) ^{5/\left( 15q-8\right) } & 
\text{if }q\neq 0,8/15, \\ 
A\left( 1-\frac{8}{15}\ln \frac{G}{A}\right) ^{-5/8} & \text{if }q=0, \\ 
A\exp \frac{5\left( \left( G/A\right) ^{8/15}-1\right) }{8} & \text{if }%
q=8/15%
\end{array}%
\right.
\end{equation*}%
hold if and only if $q\leq 34/35$, while their reverse hold if and only if $%
q\geq q_{0}\approx 0.989681$, where $q_{0}$ is the unique root of equation $%
D_{3q-8/5,q}\left( \pi /2^{-}\right) =0$ on $\mathbb{R}_{+}$, here $%
D_{3q-8/5,q}\left( \pi /2^{-}\right) $ is defined by (\ref{D_1q-8/5,q}).
\end{proposition}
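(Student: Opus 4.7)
The plan is to reduce the statement directly to Proposition \ref{PA p=3q-8/5 sharp} via the classical identity $P(a,b)=SB(G,A)$, already invoked implicitly in the paper when the symmetric mean $\mathcal{S}_{1,0}(G,A)=P$ is introduced. I verify this identity explicitly: for $a\ne b$ one has $G<A$ by AM--GM, so $t:=\arccos(G/A)$ lies in $(0,\pi/2)$, and then $\sin t=\sqrt{A^{2}-G^{2}}/A=|a-b|/(a+b)$, yielding $t=\arcsin(|a-b|/(a+b))$. Substituting into the definition of the Schwab--Borchardt mean gives
\[
SB(G,A)=\frac{\sqrt{A^{2}-G^{2}}}{\arccos(G/A)}=\frac{|a-b|/2}{\arcsin(|a-b|/(a+b))}=P(a,b).
\]

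Next, I apply Proposition \ref{PA p=3q-8/5 sharp} with its $(a,b)$ replaced by $(G,A)$, which is legitimate since $G<A$. This immediately yields
\[
P(a,b)=SB(G,A)>A\cdot M\!\left(\tfrac{G}{A};\,3q-\tfrac{8}{5},\,q\right)\qquad\text{iff}\qquad q\le \tfrac{34}{35},
\]
and the reverse inequality iff $q\ge q_{0}\approx 0.989681$, where $q_{0}$ is the same unique root of $D_{3q-8/5,q}(\pi/2^{-})=0$ on $\mathbb{R}_{+}$ appearing in that proposition.

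It only remains to check that $A\cdot M(G/A;\,3q-8/5,\,q)$ matches the three-case expression in the statement. For $q\ne 0,\,8/15$, the definition (\ref{Mp=3q=8/5}) with $t=G/A$ gives
\[
M\!\left(\tfrac{G}{A};\,3q-\tfrac{8}{5},\,q\right)=\left(\tfrac{8}{15q}+\bigl(1-\tfrac{8}{15q}\bigr)(G/A)^{q}\right)^{\!5/(15q-8)};
\]
pulling the factor $A^{5q/(15q-8)}=(A^{q})^{5/(15q-8)}$ into the bracket and observing that $1-5q/(15q-8)=(10q-8)/(15q-8)$ rewrites $A\cdot M$ as
\[
A^{(10q-8)/(15q-8)}\left(\tfrac{8}{15q}A^{q}+\bigl(1-\tfrac{8}{15q}\bigr)G^{q}\right)^{\!5/(15q-8)},
\]
which is exactly the first branch of the statement. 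The $q=0$ and $q=8/15$ cases are read off verbatim from the remaining two lines of (\ref{Mp=3q=8/5}). The only real obstacle here is this bookkeeping of the exponents of $A$, which is entirely mechanical; no new analytic input is required beyond Proposition \ref{PA p=3q-8/5 sharp}.
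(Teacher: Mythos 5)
Your proposal is correct and follows exactly the route the paper intends: the paper obtains this proposition by restating Theorem \ref{MT5 p=3q-8/5b} (equivalently Proposition \ref{PA p=3q-8/5 sharp}) under the substitution $(a,b,SB(a,b))\to(G,A,P)$, and you simply make explicit the identity $P=SB(G,A)$ and the exponent bookkeeping that turns $A\cdot M(G/A;3q-8/5,q)$ into the three displayed branches. The one point you gloss over --- that the ``only if'' directions survive the substitution because $G/A$ ranges over all of $(0,1)$ as $a/b$ does --- is equally glossed over by the paper and is immediate.
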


\begin{remark}
When replacing $\left( G,A,P\right) $ by $\left( A,Q,T\right) $ and $\left(
G,Q,U\right) $, the above proposition are still valid.
\end{remark}

\end{document}